\newtheorem{theorem}{Theorem}
\newtheorem{remark}{Remark}
\newtheorem{lemma}{Lemma}
\newtheorem{assumption}{Assumption}
\newtheorem{definition}{Definition}
\newtcbox{\alertinline}[1][red]
  {on line, arc = 0pt, outer arc = 0pt,
    colback = #1!20!white, colframe = #1!50!black,
    boxsep = 0pt, left = 1pt, right = 1pt, top = 2pt, bottom = 2pt,
    boxrule = 0pt, bottomrule = 1pt, toprule = 1pt}
\theoremstyle{plain}
\theoremstyle{definition}
\title{Sobolev Acceleration and Statistical Optimality for Learning Elliptic Equations via Gradient Descent}
\author{Yiping Lu}
\affiliation{ICME, Stanford University, CA, USA}
\email{yplu@stanford.edu}
\author{Jose Blanchet}
\affiliation{ICME, Stanford University, CA, USA}
\email{jose.blanchet@stanford.edu}
\author{Lexing Ying}
\affiliation{ICME, Stanford University, CA, USA}
\email{lexing@stanford.edu}
\keywords{Kernel Regression, Numerical PDE, Machine Learning, Non-parametric Statistics} %% first word caps
\begin{document}

\begin{abstract}
In this paper, we study the statistical limits in terms of Sobolev norms of gradient descent for solving inverse problem from randomly sampled noisy observations using a general class of objective functions. Our class of objective functions includes Sobolev training for kernel regression, Deep Ritz Methods (DRM), and Physics Informed Neural Networks (PINN) for solving elliptic partial differential equations (PDEs) as special cases. We consider a potentially infinite-dimensional parameterization of our model using a suitable Reproducing Kernel Hilbert Space and a continuous parameterization of problem hardness through the definition of kernel integral operators. We prove that gradient descent over this objective function can also achieve statistical optimality and the optimal number
of passes over the data increases with sample size. Based on our theory, we explain an implicit acceleration of using a Sobolev norm as the objective function for training, inferring that the optimal number of epochs of DRM becomes larger than the number of PINN when both the data size and the hardness of tasks increase, although both DRM and PINN can achieve statistical optimality.
\end{abstract}

% \footnote{test.}

\section{Introduction}
\label{section:intro}

Several learning based methods for solving inverse problems have been proposed recently with state-of-the-art performance across a wide range of tasks, including medical
image reconstruction \citep{ronneberger2015u}, inverse scattering \citep{khoo2017solving} and 3D reconstruction \citep{sitzmann2020implicit}. In this paper, we study the statistical limit of machine learning methods of solving inverse problems. To be specific, we consider the problem of reconstructing a function from random sampled observations with statistical noise in measurements. We apply gradient descent to a general class of objective functions for the reconstruction. When the observations are the direct observations of the function, the problem is non-parametric function estimation \citep{de2005learning,tsybakov2004optimal}. The observations may also come from  certain physical laws described by a partial differential equation (PDE) \citep{stuart2010inverse,benning2018modern}. Formally, we aim to reconstruct a function $f^*$ based on independently sampled data set $D=\{(x_i,y_i)\}_{i=1}^n$ from an unknown distribution $P$ on $\mathcal{X}\times \mathcal{Y}$, where $y_i$ is the noisy measurement of $u^*$ though a measurement procedure $\mathcal{A}$. For simplicity, we assume $\mathcal{A}$ is self-adjoint in this paper. The conditional mean function $f^*(x)=\mathbb{E}_P(Y|X=x)$ is the ground truth function for observation of $u^*$ through the measurement procedure $\mathcal{A}$, \emph{i.e.} $f^*=\mathcal{A}u^*$. To solve this problem, we consider gradient descending over the following general class of objective function $$
\hat u = \arg\min_{u\in\mathcal{H}} \mathbb{E}_{\mathbb{P}_n(x,y)} \frac{1}{2}\left<u(x),\mathcal{A}_1u(x)\right>-\left<y,\mathcal{A}_2u(x)\right>,
$$
where $\mathbb{P}_n=\frac{1}{n}\sum_{i=1}^n\delta(x_i,y_i)$ is the empirical distribution, $\mathcal{H}$ is a reproducing kernel Hilbert space (RKHS) and $\mathcal{A}_{i}, i= 1,2$ are two self-adjoint operators that satisfy $\mathcal{A}_{1}=\mathcal{A}\mathcal{A}_{2}$. In Section \ref{section:problem}, we show that several algorithms, including kernel regression \citep{de2005learning,caponnetto2007optimal} via Sobolev training \citep{czarnecki2017sobolev,richardson2008sobolev,son2021sobolev} and solving PDEs via machine learning based algorithm, \citep{raissi2019physics,sirignano2018dgm,khoo2017solving,weinan2018deep} can be considered as special cases of this formulation.

Recent works \citep{nickl2020convergence,lu2021machine} have considered the statistical limit of learning of elliptic inverse problem, \emph{i.e.} how many observation of the right hand side function of an elliptic PDE are needed to reach a prescribed performance level. However, none of these papers consider computationally feasible methods for constructing such optimal estimators. In this paper, we consider the statistical optimality of gradient descent \citep{lin2017optimal,pillaud2018statistical,lin2020optimal,marteau2019beyond}, a successful and widely used algorithm in machine learning. We show that proper early stopped gradient descent can achieve information theoretical optimal convergence rate according to a continuous scale of suitable Hilbert norm     (\emph{i.e.} Sobolev norms\citep{fischer2020sobolev,liu2020estimation}, detailed definition see Section \ref{section:problem}). 

We first proof that properly early stopped gradient descent over the class of objective functions can achieve statistical optimality. At the same time, although all the gradient flow of the class of loss function can achieve statistical optimality according to our theory, we discover an acceleration effect of using Sobolev norm as loss function for kernel based machine learning algorithms. The implicit acceleration of Sobolev loss function arises because a differential operator can enlarge the small eigenvalue of kernel integral operator for high frequency functions, leading to better condition numbers and faster convergence in these eigenspaces while keeping the statistical optimality. We justify our theoretical finding with several numerical experiments.

\subsection{Related Works}
\paragraph{Machine Learning Based PDE Solver.} Partial differential equations (PDEs) are widely used in many disciplines of science and engineering and play a prominent role in modeling and forecasting the dynamics of multiphysics and multiscale systems. The recent deep learning breakthrough and the rapid development of sensors, computational power, and data storage in the past decade has drawn attention to numerically solving PDEs via machine learning methods \citep{long2018pde,long2019pde,raissi2019physics,han2018solving,sirignano2018dgm,khoo2017solving}, especially in high dimensions where conventional methods become impractical. Based on the natural idea of representing solutions of PDEs by (deep) neural networks, different loss functions for solving PDEs are proposed. \citep{han2018solving,han2020solvingeigen} utilize the Feynman-Kac formulation which turns solving PDE to a stochastic control problem. The weak adversarial network \citep{zang2020weak} solves the weak formulations of PDEs via an adversarial network. In this paper, we focus on the convergence rate of the Deep Ritz Method (DRM) \citep{weinan2018deep,khoo2017solving} and the Physics-Informed neural network (PINN) \citep{raissi2019physics,sirignano2018dgm}. DRM \citep{weinan2018deep,khoo2017solving} utilizes the variational structure of the PDE, which is similar to the Ritz-Galerkin method in classical numerical analysis of PDEs, and trains a neural network to minimize the variational objective. PINN \citep{raissi2019physics,sirignano2018dgm} trains a neural network directly to minimize the residual of the PDE, i.e., using the strong form of the PDE. Theoretical convergence results for deep learning based PDE solvers has also received considerable attention recently. Specifically,  \citep{lu2021priori,grohs2020deep,marwah2021parametric,wojtowytsch2020some,xu2020finite,shin2020error,bai2021physics} investigated the regularity of PDEs approximated by a neural network and  \citep{lu2021priori,luo2020two,duan2021convergence,jiao2021convergence,jiao2021error} further provided generalization analyses. \citep{nickl2020convergence,lu2021machine,hutter2019minimax,manole2021plugin} provided information theoretical optimal lower and upper bounds for solving PDEs from random samples. However, all these papers assume accessibility of the global solution of empirical loss minimization. In contrast, here we consider the gradient descent algorithm for learning the estimator. The most relevant work in connection to is \citep{nickl2020polynomial}, which considers a polynomial-time Langevin-type algorithms to sample from the posterior measure of the Bayesian inverse methods. Instead of considering the Bayesian setting, here we optimize on the un-regularized objective. However, the estimator is regularized via early stopping \citep{yao2007early,ali2019continuous,ali2020implicit}, \emph{i.e.} we consider the statistical optimality of the implicit regularization effect of optimization algorithm.

 \paragraph{Learning with kernel.} Supervised least square regression in RKHS has a long history and its generalization ability and mini-max optimality has been thoroughly studied \citep{caponnetto2007optimal,smale2007learning,de2005learning,rosasco2010learning,mendelson2010regularization}. Statistical optimality of early stopped (stochastic) gradient descent has been widely discussed in \citep{yao2007early,dieuleveut2016nonparametric,polyak1992acceleration,pillaud2018statistical,lin2017optimal,wei2017early,lei2021generalization}. The convergence of least square regression in Sobolev norm has been discussed recently in \citep{fischer2020sobolev,liu2020estimation}. Recently, training neural networks with stochastic gradient descent in certain regimes has been found to be equivalent to kernel regression \citep{daniely2017sgd,lee2017deep,jacot2018neural}. Gradient descent training of neural network in the kernel regime has been found optimal for non-parametric of a wide class of functions with both early stopping regularization and ridge regression \citep{nitanda2020optimal,hu2020regularization}.

\subsection{Contribution}
\begin{itemize}
\setlength{\itemsep}{0pt}
\setlength{\parsep}{0pt}
\setlength{\parskip}{0pt}
    \item We provide information theoretical lower bounds (Theorem \ref{thoerem:lowerbound}) for a wide class of inverse problems, including the Sobolev learning rate \citep{fischer2020sobolev} for the solution of elliptic inverse problems. We also show that the previous lower bound \citep{nickl2020convergence,lu2021machine} for machine learning solving elliptic equations can be considered as a special case of our lower bound. 
    
    \item We provide a proof of statistical optimality of the gradient descent algorithm of a general class of objective functions (Theorem \ref{theorem:earlystopping}), including PINN \citep{raissi2019physics,sirignano2018dgm} and Deep Ritz Methods \citep{weinan2018deep,khoo2017solving} for solving PDEs as well as Sobolev training \citep{son2021sobolev,czarnecki2017sobolev,yu2021gradient} of kernel methods.  We provide \citep{lu2021machine} a computational feasible estimator and generalize the previous statistical optimality results of gradient descent \citep{yao2007early,pillaud2018statistical,lin2020optimal} to general Sobolev norm.
    \item We also characterize the acceleration effect of Sobolev loss function for learning with kernel. The acceleration happens because differential operator can enlarge the small eigenvalues for high frequency functions, leading to better condition number and faster convergence in these eigenspaces while keeping the statistical optimality. Thus when the target function have more high frequency component, the lead of PINN will become larger (Figure \ref{fig:Online}). We justify our theoretical finding with several numerical experiments (Figure \ref{fig:fitfunction} and Figure \ref{fig:staticscheq}).

\end{itemize}

\section{Problem Formulation}
\label{section:problem}

In this section, we formulate the problem of learning inverse problem using the kernelized gradient descent. As described previously, we aim to reconstruct a function $f^\ast\in\mathbb{R}^\mathcal{X}$ from random observations of $u^\ast=\mathcal{A}f^*$, where $\mathcal{A}$ is an observation process which is modeled by an operator maps from $\mathbb{R}^\mathcal{X}$ to $\mathbb{R}^\mathcal{X}$. To solve this problem, we write the operator $\mathcal{A}$ in terms of two operators $\mathcal{A}_i \text{ } (i=1,2)$ with $\mathcal{A}_1=\mathcal{A} \mathcal{A}_2$ and build our objective function as
\begin{equation}
    \begin{aligned}
       \mathbb{E}_{\mathbb{P}} \left[ \frac{1}{2}\left<u(x),\mathcal{A}_1u(x)\right>-\left<y,\mathcal{A}_2u(x)\right>\right],
    \end{aligned}
    \label{eq:objective}
\end{equation}
where $\mathbb{P}$ is the joint distribution of $x$ and $y$ with $x$ sampled from the uniform distribution on $\mathcal{X}$ for simplicity and $y$ as the noisy observation of $f(x)=(\mathcal{A}u)(x)$. In other words, $\mathbb{E}(y|x)=f(x)$. The minimizer of objective function (\ref{eq:objective}) is the ground truth function $u^\ast=\mathcal{A}^{-1}f$ that we are interested in.

\paragraph{Learning with Kernel} Consider the case that $u$ is parameterized by a Reproducing Kernel Hilbert Space $u_\theta(x)=\left<\theta,K_x\right>$ (we provide standard notations of RKHS in Appendix \ref{Appendix:notations}). At the same time, the kernel function has the following representation $K(s,t)=\sum_{i=1}^\infty \lambda_i e_i(s)e_j(t)$, where $e_i$ are orthogonal basis of $\mathcal{L}_2(\rho_\mathcal{X})$ with $\rho_\mathcal{X}$ being the uniform distribution over $\mathcal{X}$. Then $e_i$ is also the eigenvector of the covariance operator $\Sigma = \mathbb{E}_{x\sim\mathbb{P}}K_x\otimes K_x$ with eigenvalue $\lambda_i>0$, \emph{i.e.} $\Sigma e_i = \lambda_i e_i$. Here $g\otimes h=gh^\top$ is an operator from $\mathcal{H}$ to $\mathcal{H}$ defined as
\[
    g\otimes h:f\rightarrow \left<f,h\right>_{\mathcal{H}}g.
\]
The covariance matrix $\Sigma$ is the core of the integral operator technique \citep{smale2007learning,caponnetto2007optimal} for kernel regression. For any $f\in\mathcal{H}$, the reproducing property gives
\[
\small
(\Sigma f)(z)
=\left<K_z,\Sigma f\right>_{\mathcal{H}}=\mathbb{E}[f(X)k(X,z)]=\mathbb{E}[f(X)K_x(X)].
\]
If we consider the mapping $S:\mathcal{H}\rightarrow L_2(dx)$ defined as a parameterization of a vast class of functions in $\mathbb{R}^{\mathcal{X}}$ via $\mathcal{H}$ through the mapping $(Sg)(x)=\left<g,K_x\right>$ ($\Phi(x)=K_x=K(\cdot,x)$). Its adjoint operator $S^\ast:\mathcal{L}_2\rightarrow\mathcal{H}$ then can be defined as $g\rightarrow \int_\mathcal{X}g(x)K_x \rho_X(dx)$. $\Sigma$ is the same as the self-adjoint operator $S^\ast S$ and the self-adjoint operator $\mathcal{L}=SS^\ast:L_2\rightarrow L_2$ can be defined as
\[
    (\mathcal{L}f)(x) =(SS^*f)(x)=\int_\mathcal{X} K(x,z)f(z)\rho_{\mathcal{X}}(dz).
\]
Based on this notation, we present all our assumptions on the underlying kernel.

\begin{assumption}[Assumptions on Kernel]
\setlength{\itemsep}{0pt}
\setlength{\parsep}{0pt}
\setlength{\parskip}{0pt}
\label{assumption:kernel} We assume the standard capacity condition on kernel covariance operator with a source condition about the regularity of the target function following \citep{caponnetto2007optimal}. We further assume a regularity condition for our kernel $k(\cdot,\cdot)$ via a $\ell_\infty$ embedding property follows \citep{steinwart2009optimal,dicker2017kernel,pillaud2018statistical,fischer2020sobolev}. These conditions are stated explicitly below.
\begin{itemize}
\setlength{\itemsep}{0pt}
\setlength{\parsep}{0pt}
\setlength{\parskip}{0pt}
\item \textbf{(a) Standard assumptions.} 
The kernel feature are bounded almost surely, \emph{i.e.} $|k(x,y)|\le R$ and the observation $y$ is also bounded by $M$ almost surely.
\item \textbf{(b) Capacity condition.} Consider the spectral representation of the kernel covariance operator $\sigma=\sum \lambda_i e_i\otimes e_i$, we assume polynomial decay of eigenvalues of the covariance matrix $\lambda_i\propto i^{-\alpha}$ for some $\alpha>1$. As a result $Q=\text{tr}(\Sigma^{1/\alpha})<\infty$. 
\item \textbf{(c) Source condition.} We also impose an assumption on the smoothness of the true function. There exists $\beta\in(0,1]$ such that $u^\ast=\mathcal{L}^{\beta/2}\phi$ for some $\phi\in L^2$. If $u^\ast(x)=\left<\theta_\ast,K_x\right>_{\mathcal{H}}$, the source condition can also be written as
    $$
||\Sigma^{\frac{1-\beta}{2}}\theta_\ast||_{\mathcal{H}}<\infty.
$$

\item \textbf{(d) Capacity conditions on $\mathcal{A}_i$.} For theoretical simplicity, we assume that the self-adjoint operators  $\mathcal{A}_i$ are diagonalizable in the same orthonormal basis $e_i$. Thus we can assume
$$
\mathcal{A}_1 = \sum_{i=1}^\infty p_ie_i\otimes e_i,\mathcal{A}_2 = \sum_{i=1}^\infty q_ie_i\otimes e_i
$$
for positive constants $p_i,q_i>0$. We further assume $p_i\propto i^{-p}$ and $q_i\propto i^{-q}$. This commuting assumptions also made in \citep{cabannes2021overcoming,de2021convergence}. due to the Bochner's theorem. We further assume $p<0,q<0,\alpha+p>0$. We refer the detailed discussion to Remark \ref{remark:assumption}.

\item \textbf{(e) Regularity results on RKHS.} For $\mu\in[0,1]$, there exists $\kappa_\mu\ge 0$ such that $\Phi(x)\otimes\Phi(x)\le k_\mu^2R^{2\mu}\Sigma^{1-\mu}$ holds almost surely.  The regularity assumption here is equivalent to $||g||_{L_\infty}^2\le\kappa_\mu^2 R^{2\mu}||\Sigma^{1/2-\mu/2}g||_{\mathcal{H}}^2$  and implies $||g||_{L_\infty}\le \kappa_\mu R^\mu||g||_{\mathcal{H}}^\mu||g||_{L_2}^{1-\mu}$ for every $g\in\mathcal{H}$. As a consequence, we know that $||\Sigma^{\mu/2-1/2}\Phi(x)||_{\mathcal{H}}\le\kappa^\mu R^{\mu}$ holds almost surely. \citep{steinwart2009optimal,fischer2020sobolev,pillaud2018statistical} 
\end{itemize}
\end{assumption}

\begin{remark}\label{remark:assumption}
To simplify the technical exposition, we  assume that operator $\mathcal{A}_i (i=1,2)$ commute with the kernel covariance operator $\Sigma$.  This assumption is  also made in \citep{de2021convergence,cabannes2021overcoming}. Here we provide several examples that satisfy this assumption. The simplest case is $\mathcal{A}_1=\mathcal{A}_2=id$ , which gives rise to the function regression setting. For numerically solving a PDE, we take $\mathcal{A}_i$ to become the power of the Laplace operator $\Delta$. If the domain is a sphere, the eigen-functions are spherical harmonics which are also the eigen-functions of a wide class of kernels, examples includes the dot product kernels \citep{scetbon2021spectral} and the Neural Tangent Kernel \citep{bietti2020deep,chen2020deep}, when the data distribution is uniform distribution. When the domain is the torus, the eigen-functions are Fourier modes. If we consider a shift invariant kernel $K(x,y)=\psi(x-y)$, from Bochner’s Theorem $K(x,y)=\sum_{i=1}^n\tilde{\psi}(w)e^{iws}e^{-iwt}$ we know that the eigen-functions are also Fourier modes. There are also works that use Green function as the kernel \citep{zhou2011semi,fasshauer2011reproducing}, where the three operators will automatically commute with each other.
\end{remark}

In this paper, we consider the convergence of the estimator in Sobolev norm class. We define the different Sobolev spaces via the power space approaches used in \citep{steinwart2012mercer,fischer2020sobolev}. 
\begin{definition}[Sobolev Norm] For $\gamma>0$, the \emph{$\gamma$-power space} is
\[
\mathcal{H}^\gamma:=\left\{\sum_{i\ge 1}a_i\lambda_i^{\gamma/2}e_i:\sum_{i\ge 1}a_i^2\le\infty\right\}\subset L_2(v),
\]
equipped with the $\gamma$-power norm via $||\sum_{i\ge 1}a_i\lambda_i^{\gamma/2}e_i||_\gamma:=\left(\sum_{i\ge 1}a_i^2\right)^{1/2}$.

\end{definition}

It is obvious that $||\mathcal{L}^{\gamma/2}f||_\gamma=||f||_{L_2}$ and $||f||_\gamma\le||\Sigma^{\frac{1-\gamma}{2}}f||_{\mathcal{H}}$ \citep{fischer2020sobolev}. The source condition can also be understood as the target function $u^\ast$ lies in the $\beta$-power Sobolev space. The regularity condition of the kernel function implies a continuously  embedding from $\mathcal{H}^\gamma\rightarrow L_\infty$. Throughout this paper, we consider the convergence rate of $\hat u-u^\ast$ in $\gamma$-power Sobolev norm ($\gamma>0$).

\subsection{Examples}

\paragraph{Sobolev Training} \citep{shi2010hermite,czarnecki2017sobolev,son2021sobolev}  introduce the idea of training using Sobolev spaces via matching not only the function value but also the derivative of the classifier. Using different Sobolev norms as loss function has also been used widely in image processing, inverse problems, and graphics applications \citep{yang2021implicit,calder2010image,richardson2008sobolev,yu2021repulsive,yu2021repulsiveb,soliman2021constrained}. The work of \citep{yang2021implicit} discovered that different Sobolev loss functions would lead to different implicit bias and that the proper Sobolev preconditioned gradient descent can accelerate the optimization of geometry objectives \citep{yu2021repulsive,yu2021repulsiveb,soliman2021constrained}. In this paper, we discover that stochastic gradient descent over Sobolev norm loss class functions can achieve statistical optimal but proper selection of the Sobolev norm loss function can accelerate training. We call this phenomenon \textbf{Sobolev Implicit Acceleration} and discuss it in Section \ref{section:Sobolev}.

\paragraph{Machine Learning Based PDE Solver.}  To simplify the exposition, we focus on a prototype elliptic PDE: Poisson's equation on a torus, \emph{i.e.} $\Omega=\mathbb{T}^d=[0,1]^d_{\text{per}}$. Our focus is on the analysis of deep-learning-based numerical methods for the elliptic equations 
\begin{equation}\label{eq:maineq}
\begin{aligned}
   -\Delta u +u & = f \quad \text{ in } \Omega.
\end{aligned}
\end{equation}
We mainly focus on analyzing Deep Ritz Method (DRM) \citep{weinan2018deep} and Physics Informed Neural Network (PINN) \citep{raissi2019physics,sirignano2018dgm}.  DRM solves the equation (\ref{eq:maineq}) via minimizing the following variational form
\begin{equation}\label{eq:variationalform}
    u^\ast = \arg \min_{u\in\mathcal{F}} \mathcal{E}^{\text{DRM}}(u):=\frac{1}{2} \int_{\Omega} |\nabla u|^2+u^2 \ dx- \int_{\Omega} f u dx, 
\end{equation}
while PINNs solves the equation (\ref{eq:pinnform}) via minimizing the following strong formula, \emph{i.e} the residual of the PDE,
\begin{equation}\label{eq:pinnform}
    u^\ast = \arg \min_{u\in\mathcal{F}} \mathcal{E}^{\text{PINN}}(u):=\frac{1}{2} \int_{\Omega} \left(\Delta u-u+f\right)^2 \ dx, 
\end{equation}
where $u$ is minimized over a parameterized function class $\mathcal{F}$ (for example neural network). Here we consider the function class to be the RKHS space \citep{chen2021solving,stepaniants2021learning}. \citep{lu2021machine} showed that empirical risk minimization of both objectives can achieve information theoretical optimal bounds. The objective function in \ref{eq:variationalform} and \ref{eq:pinnform} can be considered as special case of objective function (\ref{eq:objective}). For DRM, $\mathcal{A}_1 u=\Delta u$ and $\mathcal{A}_2u =u$ for all function $u\in\mathbb{R}^{\mathcal{X}}$. For PINN, $\mathcal{A}_1 u=\Delta^2 u$ and $\mathcal{A}_2u =\Delta u$ for all function $u\in\mathbb{R}^{\mathcal{X}}$. 

We discover that PINN convergences faster than DRM consistently due to the implicit Sobolev acceleration, matching the observation made in \citep{chen2020comprehensive}. \citep{cabannes2021overcoming} considered semi-supervised learning using Laplacian regularization with kernel parameterization.  However, this paper does not consider training with stochastic gradient descent and also does not introduce the source condition assumption that leads to different convergence rate for a hierarchical parameterization of task difficulty.

\section{Main Theorem}

We present our main results in this section, including an information theoretical lower bound and a matching upper bound with proper selected early stopping time.

\subsection{Lower Bounds}
\label{section:lowerbound}

This subsection investigates the statistical optimality of the Sobolev convergence rate of solving elliptic problem using stochastic gradient descent. We provide the information theoretical lower bound of learning the elliptic problems. Different from \citep{nickl2020convergence,lu2021machine}, we formulate the problem in an RKHS. This leads to a different construction of hypothesis and show that \citep{nickl2020convergence,lu2021machine} is a special case of our lower bound using specific kernel and operator $\mathcal{A}_i (i=1,2)$ in Section \ref{section:discuss}.

\begin{theorem}[Lower Bound]\label{thoerem:lowerbound}
Let $(X,B)$ be a measurable space,  $H$ be a separable RKHS on $X$ with respect to a bounded and measurable kernel $k$ and operator $\mathcal{A}=(\mathcal{A}_2^{-1}\mathcal{A}_1)$ satisfies Assumption \ref{assumption:kernel}. We have $n$ i.i.d. random observations $\{(x_i,y_i)\in\mathcal{X}\times\mathcal{Y}\}_{i=1}^n$ of $f^\ast = \mathcal{A}u, u\in\mathcal{H}^\gamma\cap L_\infty$, \emph{i.e.} $y_i=f^*(x_i)+\eta_i$ where $\eta_i$ is a mean zero random noise satisfies the momentum assumption $\mathbb{E}|\eta|^m\le\frac{1}{2}m!\sigma^2L^{m-2}$ for some constants $\sigma,L>0$. Then for all estimators $H:(\mathcal{X}\times\mathcal{Y})^{\otimes n}\rightarrow \mathcal{H}^\gamma$ satisfies
\[
\inf_{H}\sup_{u^\ast}\mathbb{E}||H(\{(x_i,y_i)\}_{i=1}^n)-u^\ast||_\gamma^2\gtrsim n^{-\frac{(\max\{\beta,\mu\}-\gamma)\alpha}{\max\{\beta,\mu\}\alpha+2(q-p)+1}}.
\]
\end{theorem}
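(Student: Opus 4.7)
The plan is to apply the standard minimax lower bound technique via Fano's inequality combined with a Gilbert--Varshamov packing, adapted to the inverse problem and the $\gamma$-power target norm. First I would reduce to multi-hypothesis testing by constructing a finite family $\{u_j\}_{j=1}^M \subset \mathcal{H}^{\max\{\beta,\mu\}} \cap L_\infty$ (placing the packing inside the smaller $\max\{\beta,\mu\}$-power ball so that both the source condition and the $L_\infty$ regularity (e) hold automatically via the embedding $\mathcal{H}^\mu \hookrightarrow L_\infty$) with pairwise $\gamma$-separation at least $2\delta$ and pairwise Kullback--Leibler divergences at most $\tfrac{1}{8}\log M$; Fano's inequality then certifies a minimax lower bound of order $\delta^2$.

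The candidates will be ``needles'' placed in a dyadic frequency band,
\[
u_\epsilon \;=\; c\sum_{i \in [N,2N]} \epsilon_i\, e_i, \qquad \epsilon \in \{-1,+1\}^N,
\]
for $N$ and $c$ to be fixed, from which Gilbert--Varshamov extracts a subset $\mathcal{E}$ with $|\mathcal{E}| \ge 2^{c_0 N}$ and pairwise Hamming distance $\gtrsim N$, giving $\log M \asymp N$. Using the three spectral ingredients from Assumption~\ref{assumption:kernel}, one computes: (i) by the decay $\lambda_i \asymp i^{-\alpha}$, $\|u_\epsilon\|_{\max\{\beta,\mu\}}^2 \asymp c^2 N^{\alpha\max\{\beta,\mu\}+1}$, which controls the source and $L_\infty$ constraints simultaneously; (ii) the $\gamma$-separation is $\|u_\epsilon - u_{\epsilon'}\|_\gamma^2 \gtrsim c^2 N^{\alpha\gamma+1}$; (iii) because $\mathcal{A} = \mathcal{A}_2^{-1}\mathcal{A}_1$ commutes with $\Sigma$ and carries eigenvalues $p_i/q_i \asymp i^{q-p}$, and because the Bernstein moment assumption reduces to a Gaussian-type KL comparison, the pairwise divergence satisfies $\mathrm{KL}(P_{u_\epsilon}^{\otimes n}\|P_{u_{\epsilon'}}^{\otimes n}) \lesssim n\|\mathcal{A}(u_\epsilon - u_{\epsilon'})\|_{L_2}^2 \lesssim nc^2 N^{1+2(q-p)}$.

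Optimizing, I would enforce the source/$L_\infty$ saturation $c^2 \asymp N^{-(\alpha\max\{\beta,\mu\}+1)}$ together with Fano's budget $\mathrm{KL} \lesssim \log M \asymp N$; solving the resulting system simultaneously fixes a critical frequency $N_*$ (polynomial in $n$ with the exponent dictated by the combined source and noise constraints), and substituting back into the separation expression $c^2 N^{\alpha\gamma+1}$ produces exactly the claimed rate
\[
\delta^2 \;\asymp\; n^{-\alpha(\max\{\beta,\mu\}-\gamma)\,/\,(\alpha\max\{\beta,\mu\}+2(q-p)+1)}.
\]

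The hard parts are twofold. First, one must carefully book-keep ingredient (iii): the Bernstein sub-exponential noise model does not directly give a Gaussian-style KL bound, so a tensorization/moment-generating-function argument is needed to retain $n\|\mathcal{A}(u_\epsilon - u_{\epsilon'})\|_{L_2}^2$ as the effective KL, and the commuting assumption (d) plus the relation $\mathcal{A}_1 = \mathcal{A}\mathcal{A}_2$ must be invoked to obtain the explicit spectral scaling $i^{q-p}$ of $\mathcal{A}$. Second, the appearance of $\max\{\beta,\mu\}$ rather than $\beta$ in the exponent reflects a case split: when $\mu > \beta$ the effective smoothness of the admissible class is controlled by the $L_\infty$ embedding rather than the source condition, and one has to verify that the packing indeed sits in the intersection $\mathcal{H}^\beta \cap L_\infty$ with the bound $\|u_\epsilon\|_\infty \lesssim \|u_\epsilon\|_\mu$ correctly producing the combined constraint $c^2 N^{\alpha\max\{\beta,\mu\}+1} \lesssim 1$.
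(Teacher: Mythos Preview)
Your proposal is correct and follows essentially the same route as the paper: Fano plus Gilbert--Varshamov on a frequency-band packing in the eigenbasis, with the three spectral computations and the $\max\{\beta,\mu\}$ case split handled exactly as you describe (the paper parametrizes by the target separation $\epsilon$ and weights the eigenfunctions by $\lambda_{i+m}^{\gamma/2}$, but this is cosmetic). Regarding your first ``hard part,'' the paper sidesteps the sub-exponential KL issue entirely by choosing $\eta_i\sim\mathcal N(0,\min\{\sigma,L\}^2)$, which lies in the Bernstein class and yields the Gaussian KL formula $\tfrac{n}{2\bar\sigma^2}\|f_j-f_0\|_{L_2}^2$ directly---since a minimax lower bound only needs one hard noise distribution, no moment-generating-function argument is required.
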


\subsection{Upper Bounds}
\label{section:upper}
This subsection, we consider the (multiple pass)  gradient descent over the empirical data of objective function (\ref{eq:objective}).  We aim to construct our estimator via optimizing the empirical loss function
\[
\sum_{i=1}^n  \frac{1}{2}u(x_i)\mathcal{A}_1u(x_i)-y_i\mathcal{A}_2u(x_i),
\]
where $x_i$ is sampled randomly and $y_i$ is the associated noisy observation introduced in Section \ref{section:intro}. We consider a parameterization $u(x)=\left<u,K_x\right>$ and $\mathcal{A}_iu(x)=\left<\mathcal{A}_i\theta,K_x\right>_\mathcal{H}=\left<\theta,\mathcal{A}_i K_x\right>_\mathcal{H}$ and express our empirical objective function as
\begin{equation}
    \begin{aligned}
    &\mathbb{E}_{\mathbb{P}_n(x,y)} \frac{1}{2}\left<u(x),\mathcal{A}_1u(x)\right>-\left<y,\mathcal{A}_2u(x)\right>
    \\&= \mathbb{E}_{\mathbb{P}_n(x,y)}\frac{1}{2} \left<u,K_x\right>\left<\mathcal{A}_1u,K_x\right>-y\left<\mathcal{A}_2u,K_x\right>\\
    &=\mathbb{E}_{\mathbb{P}_n(x,y)} \frac{1}{2} \left<u,K_x\otimes \mathcal{A}_1 K_x u\right>-y\left<u,\mathcal{A}_2K_x\right>
    \end{aligned}
\end{equation}
Then the gradient descent algorithm can be written as the following procedure:
\begin{itemize}
\setlength{\itemsep}{0pt}
\setlength{\parsep}{0pt}
\setlength{\parskip}{0pt}
    \item \textbf{Initialization}: $\theta_0=\bar\theta_0=0$, $\gamma$ is a constant to be determined later which is used as the learning rate in the algorithm.
    \item \textbf{Iteration}: For the $t-$th iteration, we perform the following gradient descent step
    $$
    \theta_t=\theta_{t-1}+\gamma\frac{1}{n}\sum_{i=1}^n\left(y_{i}\mathcal{A}_2 K_{x_{i}}-\left<\theta_{t-1},\mathcal{A}_1K_{x_{i}}\right>_{\mathcal{H}}K_{x_{i}}\right)
    $$
    with an averaging step $\bar\theta_t=(1-\frac{1}{t})\bar\theta_{t-1}+\frac{1}{t}\theta_t$.
\end{itemize}
\paragraph{Remark.} Note that the optimizing dynamics considered here is not the exacting gradient descent dynamics over the empirical objective. The gradient of the quadratic term $\frac{1}{n}\sum_{i=1}^nu(x_i)\mathcal{A}_1u(x_i)$  should be $\frac{1}{n}\sum_{i=1}^n\left(
\left<\theta_{t-1},\mathcal{A}_1K_{x_{i}}\right>_{\mathcal{H}}K_{x_{i}}+\left<\theta_{t-1},K_{x_{i}}\right>_{\mathcal{H}}\mathcal{A}_1K_{x_{i}}\right)$ but we take instead $\frac{1}{n}\sum_{i=1}^n\left<\theta_{t-1},\mathcal{A}_1K_{x_{i}}\right>_{\mathcal{H}}K_{x_{i}}$  in our dynamics. In the population expectation, the two dynamics are the same due to the commuting assumption between the kernel integral operator and operator $\mathcal{A}_1$. The implementation of our dynamics can be applied via a stop gradient operator on the $\mathcal{A}_1 u$ during back propagating. The slight variation of gradient descent considered here facilitates the technical analysis.

The following theorem is the main result for upper bounds with the proof details given in the appendix.
\begin{theorem}\label{theorem:earlystopping} Under Assumption \ref{assumption:kernel}, we have the following three regimes shown in Figure \ref{fig:phasediag}.
\begin{itemize}
\setlength{\itemsep}{0pt}
\setlength{\parsep}{0pt}
\setlength{\parskip}{0pt}
\item For $\beta>\frac{\alpha+2q-p-1}{\alpha}$, if we take $t=n$ and \alertinline{$\gamma = n^{\frac{\alpha+p}{\beta\alpha+2(p-q)+1}-1}$}, we obtain the following rate
    $$
    \mathbb{E}[||\bar{\theta_t}-u^\ast||_\gamma^2]=O(n^{-\frac{(\beta-\gamma)\alpha}{\alpha\beta+2(p-q)+1}}).
    $$
    \item For $\frac{\alpha+2q-p-1}{\alpha}\le\beta\le\frac{\mu\alpha+2q-p+1}{\alpha}$, if we take \alertinline{$t = n^{\frac{\alpha+p}{\beta\alpha+2(p-q)+1}}$} and $\gamma$ a small enough constant, we obtain the following rate
    $$
    \mathbb{E}[||\bar{\theta_t}-u^\ast||_\gamma^2]=O(n^{-\frac{(\beta-\gamma)\alpha}{\alpha\beta+2(p-q)+1}}).
    $$
    \item For $\beta>\frac{\mu\alpha+2q-p+1}{\alpha}$, if we take \alertinline{$t = n^{\frac{\alpha+p}{\mu\alpha+p}}$} and $\gamma$ a small enough constant, we obtain the following rate
    $$
    \mathbb{E}[||\bar{\theta_t}-u^\ast||_\gamma^2]=O(n^{-\frac{(\beta-\gamma)\alpha}{\mu\alpha+p}}),
    $$
    which is not an optimal converging rate.
\end{itemize}
\end{theorem}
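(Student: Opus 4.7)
The plan is to adapt the classical averaged-kernel gradient-descent analyses (e.g.\ \citep{yao2007early,pillaud2018statistical,lin2020optimal}) to the generalised operator setting here. First I would rewrite the iteration in operator form: define the empirical operator $\hat T := \hat\Sigma\mathcal A_1 = \tfrac{1}{n}\sum_i K_{x_i}\otimes\mathcal A_1 K_{x_i}$ and the empirical target $\hat g := \tfrac{1}{n}\sum_i y_i\mathcal A_2 K_{x_i}$, so that $\theta_t = (I-\gamma\hat T)\theta_{t-1} + \gamma\hat g$. The noiseless conditional mean satisfies $\hat g^{\ast} := \mathbb E[\hat g\mid X] = \hat T\theta_{\ast}$ (using $f^\ast=\mathcal A u^\ast$ and $\mathcal A_1=\mathcal A\mathcal A_2$), so one obtains the clean decomposition $\bar\theta_t - \theta_\ast = B_t + V_t$, with
\[
 B_t = -\tfrac{1}{t}\sum_{s=1}^t(I-\gamma\hat T)^s\theta_\ast, \qquad V_t = \gamma\sum_{r=0}^{t-1}\bigl(1-\tfrac{r}{t}\bigr)(I-\gamma\hat T)^r(\hat g-\hat g^\ast).
\]

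For the population analysis I replace $\hat T$ by $T := \Sigma\mathcal A_1$, which is self-adjoint under the commuting hypothesis in Assumption~\ref{assumption:kernel}(d) and has eigenvalues $\mu_i = \lambda_i p_i \asymp i^{-(\alpha+p)}$ on $\{e_i\}$. Writing the source condition as $\theta_\ast^{(i)} = \lambda_i^{(\beta-1)/2}v_i$ with $\sum_i v_i^2<\infty$ and using $\tfrac{1}{t}\sum_{s=1}^t(1-x)^s\le\min(1,(tx)^{-1})$, the $\gamma$-power-norm bias reduces to
\[
 \|B_t\|_\gamma^2 \;\lesssim\; \sum_i \lambda_i^{\beta-\gamma} v_i^2 \min\!\bigl(1,(\gamma t\mu_i)^{-2}\bigr) \;\asymp\; (\gamma t)^{-\alpha(\beta-\gamma)/(\alpha+p)},
\]
with transition frequency $i_\ast=(\gamma t)^{1/(\alpha+p)}$. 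The conditional noise covariance is $\tfrac{\sigma^2}{n}\hat\Sigma\mathcal A_2^2$, so an analogous spectral calculation gives
\[
\mathbb E\|V_t\|_\gamma^2 \;\lesssim\; \tfrac{\sigma^2}{n}\sum_i \lambda_i^{2-\gamma}q_i^2\min\!\bigl((\gamma t)^2,\mu_i^{-2}\bigr) \;\asymp\; \tfrac{\sigma^2}{n}(\gamma t)^{(\alpha\gamma+2(p-q)+1)/(\alpha+p)}.
\]
Balancing yields $\gamma t \asymp n^{(\alpha+p)/(\alpha\beta+2(p-q)+1)}$ and the advertised rate. The three-case split then comes from the stability constraint $\gamma\lesssim 1$ together with $t\ge 1$: when the balanced $\gamma_\ast$ fits under the ceiling one takes $t=n$ with the listed $\gamma$ (case~1); otherwise $\gamma$ is pinned at a constant and $t$ is inflated to the listed value (case~2). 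For $\beta>(\mu\alpha+2q-p+1)/\alpha$, the $L_\infty$ embedding of Assumption~\ref{assumption:kernel}(e) becomes binding in the variance sum, the effective exponent degrades from $\alpha\beta+2(p-q)+1$ to $\mu\alpha+p$, and the choice $t\asymp n^{(\alpha+p)/(\mu\alpha+p)}$ delivers the sub-optimal rate of case~3.

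To transfer the population bounds to the empirical operators I would use Bernstein-type concentration for self-adjoint Hilbert-Schmidt operators applied to weighted quantities $\Sigma^{a}(\hat\Sigma-\Sigma)\Sigma^{b}$ and $\Sigma^{a}(\hat T-T)\Sigma^{b}$, choosing $(a,b)$ so that the $\mu$-embedding $\Phi(x)\otimes\Phi(x)\preceq\kappa_\mu^2R^{2\mu}\Sigma^{1-\mu}$ of Assumption~\ref{assumption:kernel}(e) bounds each summand. The telescoping identity $\hat T^r-T^r=\sum_{j=0}^{r-1}\hat T^{j}(\hat T-T)T^{r-1-j}$ together with $\|(I-\gamma\hat T)^r\|\le 1$ (valid while $\gamma\|\hat T\|\le 1$) then shows that propagating the empirical operator through a polynomial of degree $t$ adds only an $O(t\|\hat T-T\|)$ term in the appropriate weighted operator norm, which is sub-leading relative to the population variance for the calibrated $(t,\gamma)$.

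The hardest part will be that $\hat T=\hat\Sigma\mathcal A_1$ is not self-adjoint (unlike the population $T$) and contains the unbounded operator $\mathcal A_1$, so the usual operator Bernstein inequalities cannot be applied naively; the remedy is to pre-conjugate by fractional powers of $\Sigma$ and lean on the $\mu$-embedding of~(e), whose breakdown at $\beta=(\mu\alpha+2q-p+1)/\alpha$ is precisely what produces the saturating third regime. A secondary subtlety is keeping the averaging polynomial $\tfrac{1}{t}\sum_{s=1}^t(1-\gamma x)^s$ — rather than the single-step $(1-\gamma x)^t$ used in non-averaged analyses — under control through every spectral sum, since its saturation at $\mu_i\asymp(\gamma t)^{-1}$ is what pins down the transition frequency $i_\ast$ and determines which of the three rates is attained.
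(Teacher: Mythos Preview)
Your population-level calculations recover exactly the bias $(\gamma t)^{-\alpha(\beta-\gamma)/(\alpha+p)}$ and the leading variance $\tfrac{1}{n}(\gamma t)^{(\alpha\gamma+2(p-q)+1)/(\alpha+p)}$ that the paper obtains, and the balancing $\gamma t\asymp n^{(\alpha+p)/(\alpha\beta+2(p-q)+1)}$ is the right one. The architecture of your argument, however, is different from the paper's. Rather than the direct split $\bar\theta_t-\theta_\ast=B_t+V_t$ with both pieces written in terms of the empirical operator $\hat T$, the paper casts averaged GD as a \emph{spectral filter} $\bar\eta_t=q_t(\hat\Sigma_{Id,\mathcal A_1})\mathcal A_2\hat S_n^\ast\hat y$ with $q_t(x)=x^{-1}\bigl(1-\tfrac{1-(1-\gamma x)^t}{\gamma tx}\bigr)$, and compares it to the \emph{population ridge} estimator $g_\lambda=(\Sigma_{Id,\mathcal A_1}+\lambda I)^{-1}\mathcal A_2 S^\ast f_\rho$ at $\lambda=(\gamma t)^{-1}$. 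The bias $\|g_\lambda-u^\ast\|_\gamma$ is then a purely population quantity, and the entire empirical-to-population passage lives inside $\|\hat q_\lambda-g_\lambda\|_\gamma$, which is broken into three pieces (I)--(III) and controlled by the \emph{sandwiching} estimate $\|(\Sigma_{Id,\mathcal A_1}+\lambda)^{1/2}(\hat\Sigma_{Id,\mathcal A_1}+\lambda)^{-1/2}\|\le 2$ of Lemma~\ref{lemma:aba}. This yields a multiplicative constant instead of the additive $O(t\,\|\hat T-T\|)$ residual your telescoping produces, and so sidesteps the question of whether that residual is genuinely sub-leading for every $(t,\gamma)$ appearing in the statement.

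A related point concerns the third regime. In the paper, the second, $\mu$-dependent variance contribution does not emerge from a telescoping remainder; it appears explicitly inside the Bernstein moment bound for term~(I), through the product $\|(\Sigma_{Id,\mathcal A_1}+\lambda)^{-1/2}K_x\|^2\,\|\mathcal A_2 f^\ast-\mathcal A_1 g_\lambda\|_2^2$ (Lemmas~\ref{lemma:hx} and~\ref{lemma:biasenergy}), and overtakes the effective-dimension term $\mathcal N(\lambda)$ exactly when $\beta>(\mu\alpha+2q-p+1)/\alpha$. In your decomposition this same contribution would have to be extracted from the weighted concentration of $\hat T-T$ acting on $\theta_\ast$ inside the empirical bias $B_t$, not from the noise sum $V_t$ as your phrasing (``binding in the variance sum'') suggests. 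That is doable, but the spectral-filter bookkeeping makes the location and size of this term considerably more transparent than the telescoping route you outline.
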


\paragraph{Sketch of the Proof.} We first rewrite the averaged gradient descent in a more compact formula as
$\eta_0=0, \eta_u=\eta_{u-1}+\gamma (\mathcal{A}_2^\top\hat S_n^\ast \hat y-\hat \Sigma_{Id,\mathcal{A}_1}\eta_{t-1})$ where $\hat S_n:\mathcal{H}\rightarrow \mathbb{R}^n$ is defined as $\hat S_n g=\frac{1}{\sqrt{n}}\left(g(x_1),\cdots,g(x_n)\right)$, $\hat\Sigma_{\mathcal{O}_1,\mathcal{O}_2}=\frac{1}{n}\sum_{i=1}^n \mathcal{O}_1K_x\otimes \mathcal{O}_2K_x$ and $Id$ is the identity operator.   For the error of GD, we consider early stopping of gradient descent algorithm as a spectral filtering \citep{gerfo2008spectral,pillaud2018statistical,blanchard2018optimal,lin2020optimal}. Our proof is based on standard bias-variance decomposition. For $t$ iteration, GD will behave similarly to ridge regression with $\gamma t$ regularization strength \citep{yao2007early,pillaud2018statistical} and this result in bias of $(\frac{1}{\gamma t})^{\frac{(\beta-\gamma)\alpha}{\alpha+p}}$. For the variance, we provide a bound which is related to the effective dimension given by $ \text{tr}((\Sigma_{Id,\mathcal{A}_1}+(\frac{1}{\gamma t})I)^{-1}\Sigma_{\mathcal{A}_2^\top\mathcal{A}_2})$ and obtain a final variance of the form $\frac{1}{n}(\gamma t)^{-\frac{\gamma\alpha+p}{\alpha+p}}{(\frac{1}{\gamma t})^{-\frac{1}{\alpha+p}}(\frac{1}{\gamma t})^{-\frac{p-2q}{\alpha+p}}}+\frac{1}{n}(\frac{1}{\gamma t})^{-\frac{\gamma\alpha+p}{\alpha+p}}(\frac{1}{\gamma t})^{-\frac{\mu\alpha-p}{\alpha+p}}(\frac{1}{\gamma t})^{\frac{\beta\alpha-2q}{\alpha+p}}$. If we only have the first term of variance, we shall achieve information theoretical optimal bound when \alertinline{$t = n^{\frac{\alpha+p}{\beta\alpha+2(p-q)+1}}$}. For the section term in the variance is from the convergence of empirical covariance matrix $\hat \Sigma_{Id,\mathcal{A}_1}$ to the population one $\Sigma_{Id,\mathcal{A}_1}$. This term can be reduced using semi-supervised learning techniques as in \citep{murata2021gradient,lu2021machine}.

{

\begin{figure}
    \centering

\begin{tikzpicture}
\centering
\draw[style=help lines,step=0.5cm,opacity=0.3] (0,0) grid (5.3,3.3);

\draw[->,very thick] (0,0) -- (5.8,0) node[right] {$\alpha$};
\draw[->,very thick] (0,0) -- (0,3.8) node[above] {$\beta$};

\draw[line width =4pt] (0,0) to [in=180,out=90] (5,2);

\draw[line width =4pt] (5,0)to [in=-40,out=180](0.3,1.0);

\filldraw[fill= orange!50,opacity=0.3] (0.3,1) to [in=180,out=40] (5,2)--(5,0)to [in=-40,out=180](0.3,1.0);

\filldraw[fill= black!50,opacity=0.3] (5,0)to [in=-40,out=180](0.3,1.0) to [in=90,out=40] (0.0,0.0)--(5,0);

\node[below,scale=0.7] at (4.2,2){$\beta=\frac{\alpha+2q-p-1}{\alpha}$};

\node[below,scale=0.7] at (4.2,0){$\beta=\frac{\mu\alpha+2q-p+1}{\alpha}$};
\node[above,orange,scale=0.8] at (4.2,0.2){Constant LR};

\node[below,black,scale=0.8] at (1.2,0.0){Sub-Optimal};

\filldraw[fill= blue!50,opacity=0.3] (0,0) to [in=180,out=90] (5,2)--(5,3)--(0,3)--(0,0);
\node[above,violet,scale=0.8] at (4.2,3){Small LR, $n$ Iteration};

%(-3,0.8)--(-1.5,3.2)  plot [smooth,tension=0.8] 
%coordinates { (1.8,2.8) (3.1,0.8)
%(2.5,-2.2)  (0.2,-1.5) (-3,-3.5) (-3,0.8)};

\end{tikzpicture}
    \caption{Phase diagram of different regimes for solving inverse problem using stochastic gradient descent. Except the gray regime, GD can achieve information theoretical rate.}
    \label{fig:phasediag}
\end{figure}
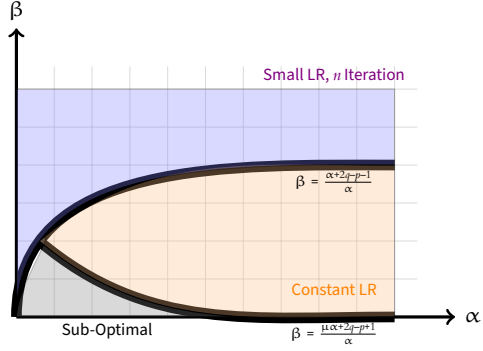

}

\subsection{Discussion and Implication of Our Theory}

\label{section:discuss}

\paragraph{Relationship with \citep{nickl2020convergence,lu2021machine}.} \citep{nickl2020convergence,lu2021machine} provided a lower bound of the form $n^{-\frac{{\color{orange}2\alpha-2s}}{2\alpha-\textbf{\color{red}4t}+d}}$ for a $2t-$th order linear PDE $\Delta^t u = f$ with solution in $H^\alpha$, evaluated in $H^s$ norm. We shall discuss the relationship between their bound with our $n^{-\frac{{\color{orange}(\beta-\gamma)}\alpha}{\beta\alpha+2\textbf{\color{red}(p-q)}+1}}$ lower bound based on the kernel representation of Sobolev spaces. The numerator $(\beta-\gamma)$ matches the $\alpha-s$ term in \citep{nickl2020convergence,lu2021machine}'s lower bound and the $q-p$ term is the order of the linear PDE which matches the $t$ term in the denominator in \citep{nickl2020convergence,lu2021machine}'s lower bound. The spectral decay speed of kernel $\alpha$ is always relative to the dimension $d$. To understand this problem, we consider the following two examples.

For the first example, the kernel is defined on the torus $\mathbb{T}^d=[0,1]^d_{\text{per}}$. We consider the  space of square integrable functions on $\mathbb{T}^d$ with mean 0 and the Mat\'{e}rn kernel $K_{\sigma,l,v}(x,y)=\sigma^2\frac{2^{1-v}}{\Gamma(v)}\left(\frac{|x-y|}{l}\right)^vB_v(\frac{|x-y|}{l})$, where $B_v$ is the modified Bessel function of section kind. The covariance operator is $C_\theta=\sigma^2(-\Delta+\tau^2 I)^{-s}$ with orthonormal eigenfunctions $\phi_m(x)=e^{2\pi i\left<m,x\right>}$ and corresponding eigenvalues $\lambda_m=\sigma^2(4\pi^2|m|^2+\tau^2)^{-s}$ for every $m\in\mathbb{Z}^d\backslash\{0\}$ \citep{stein1999interpolation}.

For the second example, we consider the Mercer's decomposition of a translation invariant kernel via Fourier series
$K(s-t)=\frac{1}{2\pi}\sum_w \tilde{K}(w)e^{iw(s)}e^{iw(-t)}dw$. The eigenfunctions of the translation invariant kernel is the Fourier modes and the eigenvalues are the Fourier coefficients. As an example, for Neural Tangent Kernel, \citep{cao2019generalization,chen2020deep,bietti2020deep,nitanda2020optimal} proved that the corresponding $\alpha=\frac{d}{d-1}$ and the eigenfunctions are spherical harmonics that diagonalize the differential equation.

For the upper bound, \citep{lu2021machine} established the convergence rate based on the \emph{empirical process} technique \citep{mendelson2010regularization,steinwart2009optimal}, while our paper switches to the integral operator/inverse problem technique \citep{de2005learning,smale2007learning,caponnetto2007optimal}. An advantage of the integral operator/inverse problem technique is that it can provide convergence results with respect to a continuous scale of Sobolev norms while the empirical process technique can only be used for the Sobolev norm equivalent to the objective function.

\paragraph{Relationship with \citep{shi2010hermite}} \citep{shi2010hermite} also considered learning from data involving function value and gradients under the framework of least-square regularized regression in reproducing kernel Hilbert spaces. In this paper, we only have access to the noisy observation of the function values but still aim to know about the convergence rate with respect to the Sobolev norm. At the same time, we further consider an inverse problem setting with an early stopping regularization, which is not discussed in \citep{shi2010hermite}. However, we introduce a commuting assumption over the differential operator with the kernel integral operator that makes the problem easier.

\paragraph{Sobolev Implicit Acceleration} Below we discuss the implication of the choice of early stopping time $t = n^{\frac{\alpha+p}{\beta\alpha+2(p-q)+1}}$. First of all, the best early stopping time here does not depend on $\gamma$, which means the best model in different Sobolev is the same over the stochastic gradient descent path asymptotically. Secondly, all the components in an iteration step depend on the problem itself except the numerator $\alpha+p$. For differential operators, the $p$ is actually \textbf{\emph{negative}} (differential operators have large eigenvalues over high-frequency basis). Thus we can accelerate the training  via letting $p$ more negative, \emph{i.e.} using a higher order Sobolev norm as loss can lead to earlier stopping. As an implication, the PINN achieves the statistical optimal solution faster than DRM.

%\frac{1}{2\pi}\sum_w \tilde{K}(w)e^{iw(s-t)}dw

\paragraph{Relationship with implicit bias of frequency} Recent work credit the success of deep learning to the fast training in low frequency components \citep{rahaman2019spectral,xu2019frequency,kalimeris2019sgd}. However, in our work, with Sobolev preconditioning, the training speed of high frequency part increases, yet achieving statistical optimality in the class of Sobolev norm. This suggests that the implicit bias of frequency is not necessary for good generalization results. We also would like refer to \citep{amari2020does,mucke2020stochastic} Theorem 8 for the extreme case, where the authors directly invert the population covariance matrix which leads to the same training speed in every eigen-spaces while still maintaining the statistical optimality in $\ell_2$ norm. However the preconditioning matrix in \citep{amari2020does} is the population Fisher information matrix, which requires further sampling of unlabeled data that is not accessible in our setting.

\paragraph{Discussion of the Sub-Optimal Regime} In the sub-optimal regime, the concentration error between the empirical covariance matrix $\hat \Sigma_{Id,\mathcal{A}_1}$ and the population one $ \Sigma_{Id,\mathcal{A}_1}$ dominates. With the observation that these concentrations have no relationship with the supervision signal, \citep{lu2021machine,murata2021gradient} proposed to utilize the semi-supervised learning to reduce the error in this regime. In \citep{lu2021machine}, Deep Ritz method requires semi-supervised learning while PINN does not for the exact empirical risk minimization solution. In our formulation, if $|p|$ is larger, the sub-optimal regime will become smaller, which contradict with the observation in \citep{lu2021machine}. However \citep{lu2021machine} only considers the statistical generalization bound but doesn't take optimization into consideration. We leave designing algorithm with smaller sub-optimal regime as future work.

\section{Sobolev implicit acceleration}
\label{section:Sobolev}

The Sobolev norm has already been proposed as loss function for training neural network \citep{czarnecki2017sobolev} and solving PDEs \citep{son2021sobolev,yu2021gradient}. However, all these papers need a further gradient information of the supervision signal. This does not fit the theoretical framework  considered here and hence it is also not fair to compare their algorithms with methods without gradient supervision signal. Thus in this section, we proposed an alternative objective that can perform Sobolev training without gradient supervision loss function. The basic idea is to using an integration by parts
\begin{equation}
    \begin{aligned}
    \int |\nabla u-\nabla f|^2 dx = \int ||\nabla u||_2^2 +2\Delta u\cdot f+||\nabla f||_2^2 dx,
    \end{aligned}
    \label{eq:objsobolev}
\end{equation}
which leads to an objective function without the gradient of the target function. In this section, we shall show how this idea is applied to different machine learning examples.

\subsection{Predicting a Toy Function on Torus}

In this section, we conduct experiments to illustrate the Sobolev implicit acceleration for function regression. Different from the Sobolev training \citep{czarnecki2017sobolev}, the objective that we are interested in does not involve the gradient of the target function. As a result, we do not need to train a teacher network to provide the gradient supervision information as done in \citep{czarnecki2017sobolev}. In the toy example, for simplicity we ignore the boundary terms introduced by the integral by part. Here consider estimating a function on the torus, \emph{i.e.} a periodic function. We consider using $\int \lambda||u-f||^2+||\nabla u||_2^2 +2\Delta u\cdot f+||\nabla f||_2^2 dx$ as our objective function. The goal is to fit function  $y=\sum_{i=1}^d\sin(2\pi x_i)$ using Gaussian Kernel and a simple three layer feed-forward network with tanh activation function. We randomly sampled 1000 data in 10 dimension as our dataset and run a gradient descent algorithm. Figure \ref{fig:fitfunction} presents our convergence result of the validation error, where the Sobolev norm have shown an acceleration effect for training.
 
\begin{figure}
    \centering
    \includegraphics[width=2.5in]{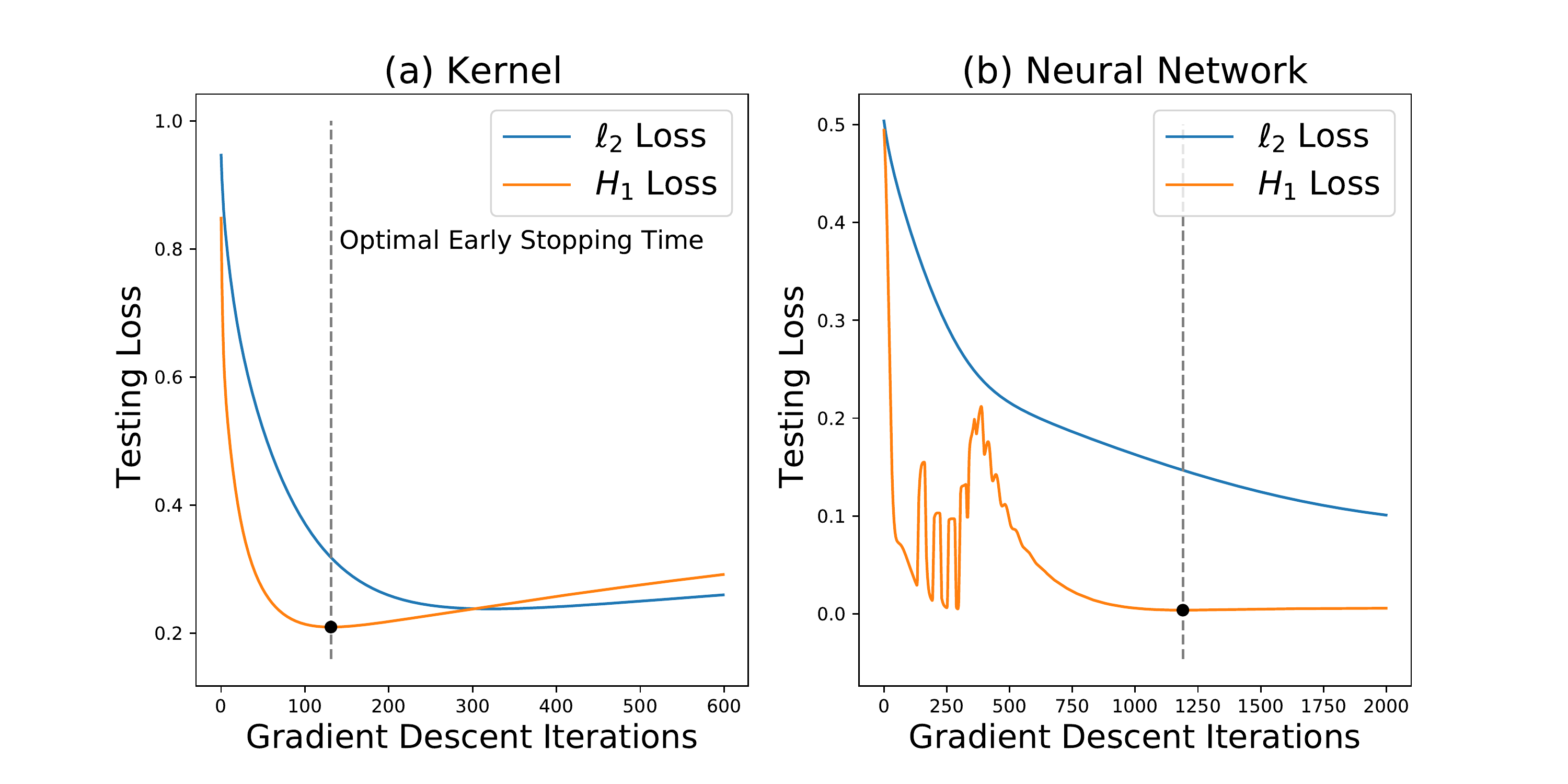}
    \caption{Sobolev Implicit Acceleration of Estimating function using kernel method and Neural Network. We observed that using Sobolev Norm as loss function can accelerate training.}
    \label{fig:fitfunction}
\end{figure}

\subsection{Solving Partial Differential Equations}

In this section, we conduct experiments to illustrate the Sobolev implicit acceleration for solving partial differential equation using PINNs \citep{raissi2019physics,yu2021gradient} in 3 dimensions. The example is a simple Poisson equation (static schr\"{o}dinger equation) on the torus
\begin{equation}
    \begin{aligned}
    \Delta u +u = f \text{ in } \mathcal{T}^d=[0,1]^d_{\text{per}}.
    \end{aligned}
    \label{eq:poisson}
\end{equation}

We first compare the Physics Informed Neural Network \citep{raissi2019physics} and Deep Ritz Method \citep{weinan2018deep,khoo2017solving}  with online random inputs. To enforce the periodic boundary conditions, we add a penalty term $\mathcal{L}_{b}=\int_{(x,y)\in[0,1]^2} (u(x,y,0)-u(x,y,1))^2+(u(0,x,y)-u(1,x,y))^2+(u(x,0,y)-u(x,1,y))^2dxdy$ to match the periodic condition of the function value and 
another term $\mathcal{L}_{b,grad}=\int_{(x,y)\in[0,1]^2} (\nabla u(x,y,0)-\nabla u(x,y,1))^2+(\nabla u(0,x,y)- \nabla u(1,x,y))^2+(\nabla u(x,0,y)-\nabla u(x,1,y))^2dxdy$ to match the periodic condition of the function value. We tested PINN and Deep Ritz on both $u(x)=\sum_{i=1}^d\sin(2x_i)$ and $u(x)=\sum_{i=1}^d\sin(4x_i)$. We use the same experiment setting as \citep{chen2020comprehensive} and keep the learning rate constantly to $1e-3$ to match our theory. 50000 data points are randomly sampled in every batch. The results are shown in Figure \ref{fig:Online}. PINN converges  faster than DRM consistently in terms of iteration number and the lead seems to become significant for more oscillatory problems.

\begin{figure}[h]
	\centering
	\subfigure[Smooth Problem]{
			\includegraphics[width=0.2\textwidth]{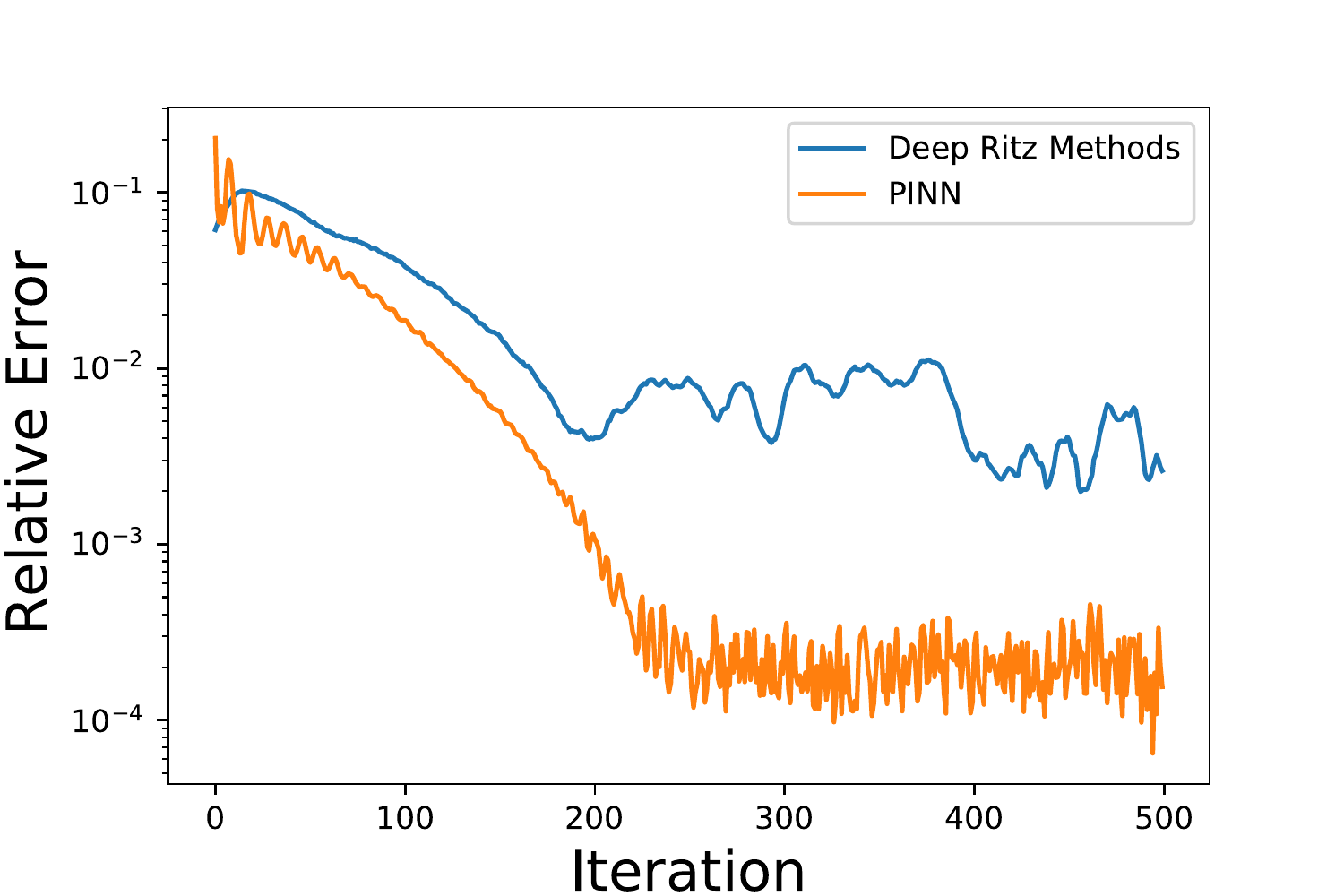}
		\label{fig:cifaravg}
	}
	\subfigure[Harder Problem]{
			\includegraphics[width=0.2\textwidth]{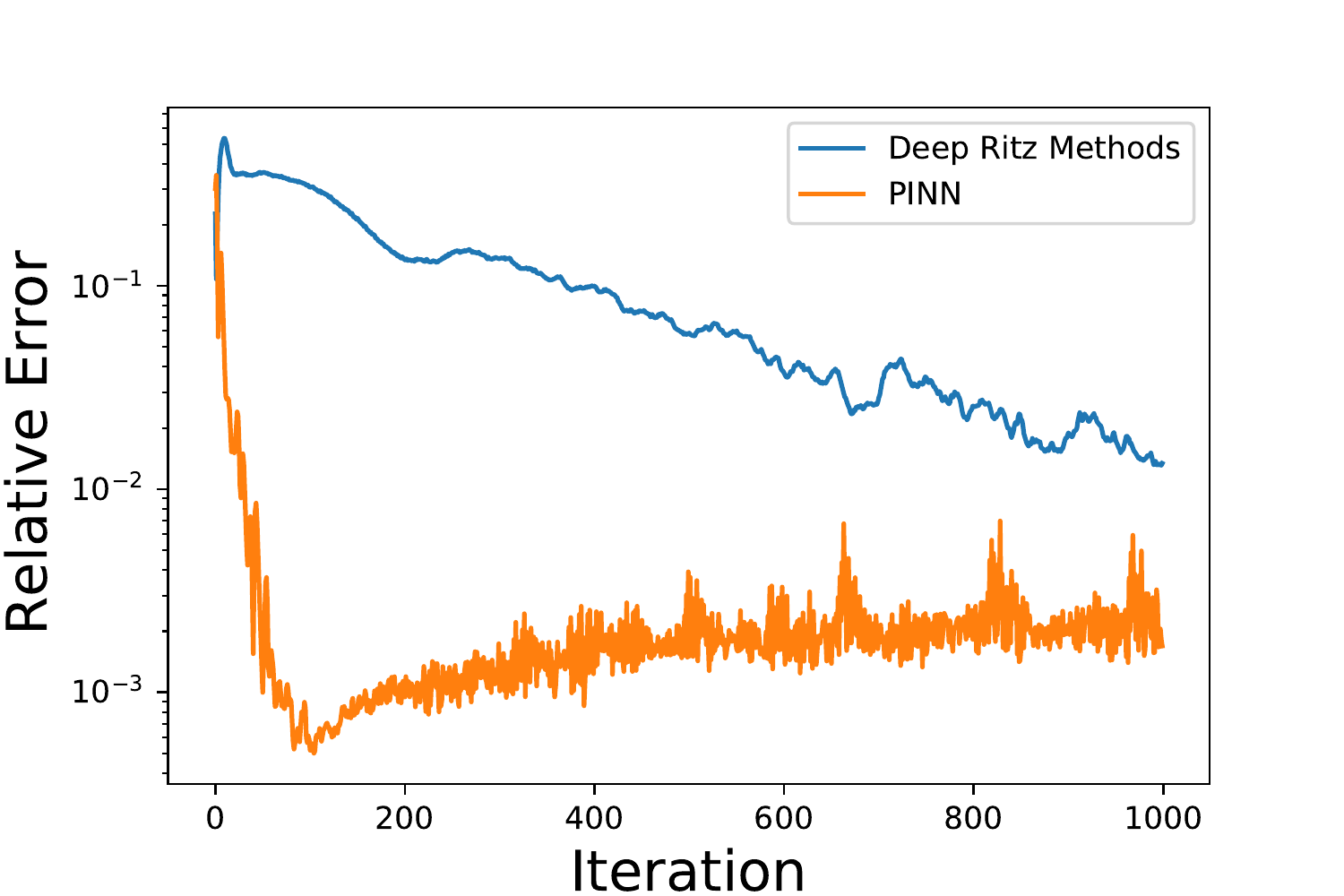}
		\label{fig:cifarworst}
	}
	
	\caption{We show the convergence result of PINN and Deep Ritz Method for smooth problem $\sum_{i=1}^d\sin(2\pi x)$ and harder problem $\sum_{i=1}^d\sin(4\pi x)$. PINN convergence faster than DRM for online stream input which also matches our theory and the empirical observation in \citep{chen2020comprehensive}. The Sobolev Implicit Acceleration will becomes more significant for harder problem as our theory shows.}
	\label{fig:Online}

\end{figure}

To solve equation (\ref{eq:poisson}), we consider minimizing the following Sobolev norm objective function
\[
\mathcal{L}(u):=\lambda||\Delta u+u - f||_{L_2(\Omega)}^2+ ||\nabla\Delta u+\nabla u - \nabla f||_{L_2(\Omega)}^2.
\]
\citep{son2021sobolev,yu2021gradient} also considered using Sobolev norms as the loss function. \citep{son2021sobolev} showed that the Sobolev norms exhibit an acceleration effect. However, in our setting, we cannot have random samples of $\nabla f$. To avoid information of $\nabla f$ appearing in the objective function, we perform an integration by parts that leads to the following objective function
\begin{equation*}\small 
    \begin{aligned}
    &\mathcal{L}_{grad}=\int ||\nabla \Delta u(x)+\nabla u(x)-\nabla f(x)||_{2}^2dx\\ &=\int ||\nabla \Delta u(x)||_2^2+||\nabla u(x)||_2^2+||\nabla f(x)||_2^2\\
    &+2\nabla\Delta u(x)\nabla u(x)-2\nabla u(x)\nabla f(x)-2\nabla\Delta u(x)\cdot \nabla f(x)dx\\
     &=\int ||\nabla \Delta u(x)||_2^2+||\nabla u(x)||_2^2+||\nabla f(x)||_2^2\\
    &+2\nabla\Delta u(x)\nabla u(x)+2\Delta u(x) f(x)+2\Delta\Delta u(x)\cdot  f(x)dx.
    \end{aligned}
\end{equation*}

We conduct the Sobolev training with the objective function $\mathcal{L}_{pinn}+\lambda \mathcal{L}_{grad}+\lambda_1 \mathcal{L}_{b}+\lambda \mathcal{L}_{b,grad}$ and compare it with PINN and DRM. Following mostly the experiment setting in \citep{chen2020comprehensive}, we fix 3000 random samples as the dataset and run stochastic gradient descent with batchsize 50. The result presented in Figure \ref{fig:staticscheq}
show the Sobolev implicit acceleration, i.e., the gradient dynamic of higher order Sobolev norm convergence faster. We do not scale the Sobolev training to online setting as under large batch size the Sobolev training consume too much memory at this point.

\begin{figure}[htbp]
\centering
\subfigure[Solution by Sobolev Training.]{
\centering
\begin{minipage}[t]{0.5\linewidth}
\centering
\includegraphics[width=1.5in]{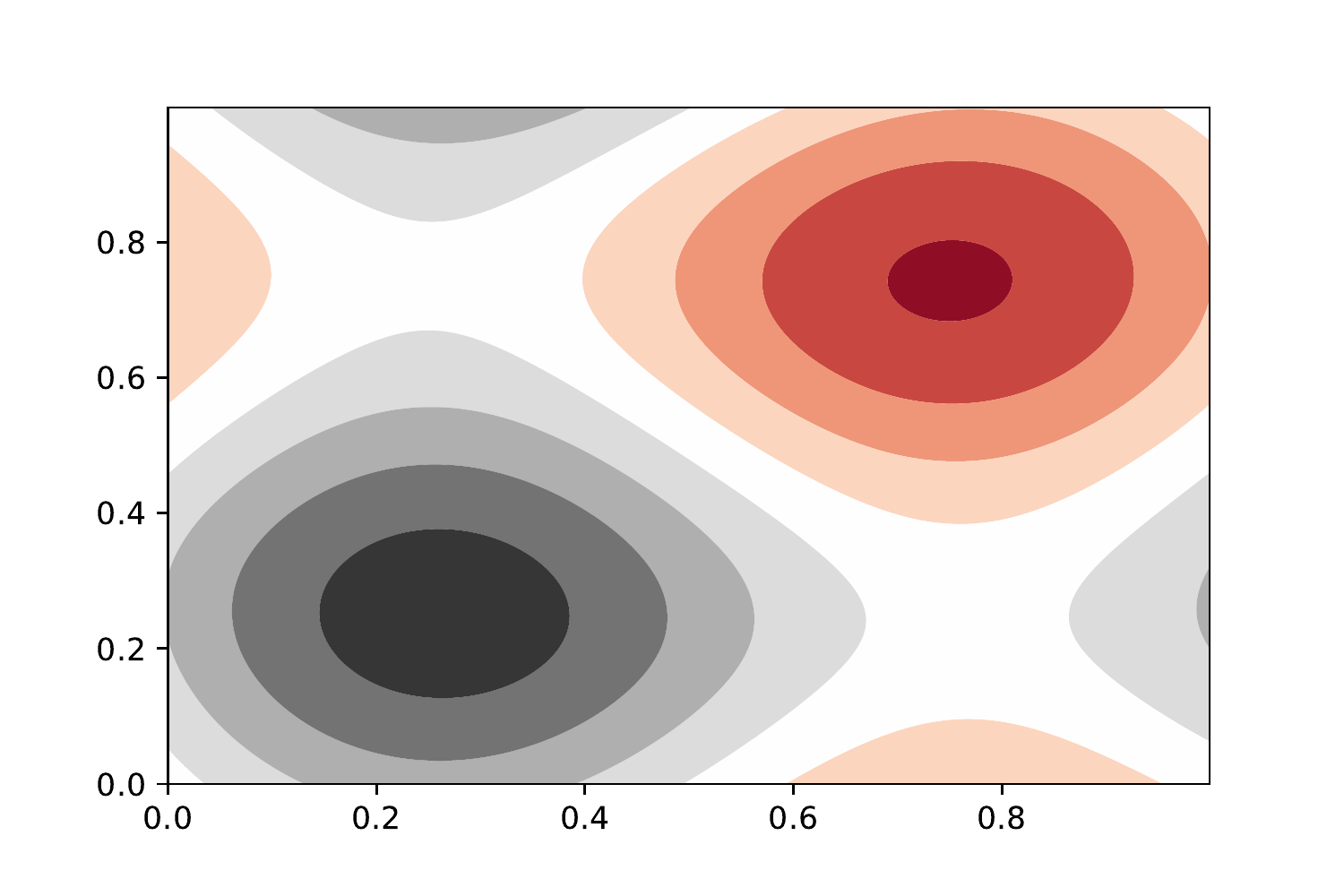}
%\caption{fig1}
\end{minipage}%
}%
\subfigure[Convergence Speed.]{
\centering
\begin{minipage}[t]{0.5\linewidth}
\centering
\includegraphics[width=1.5in]{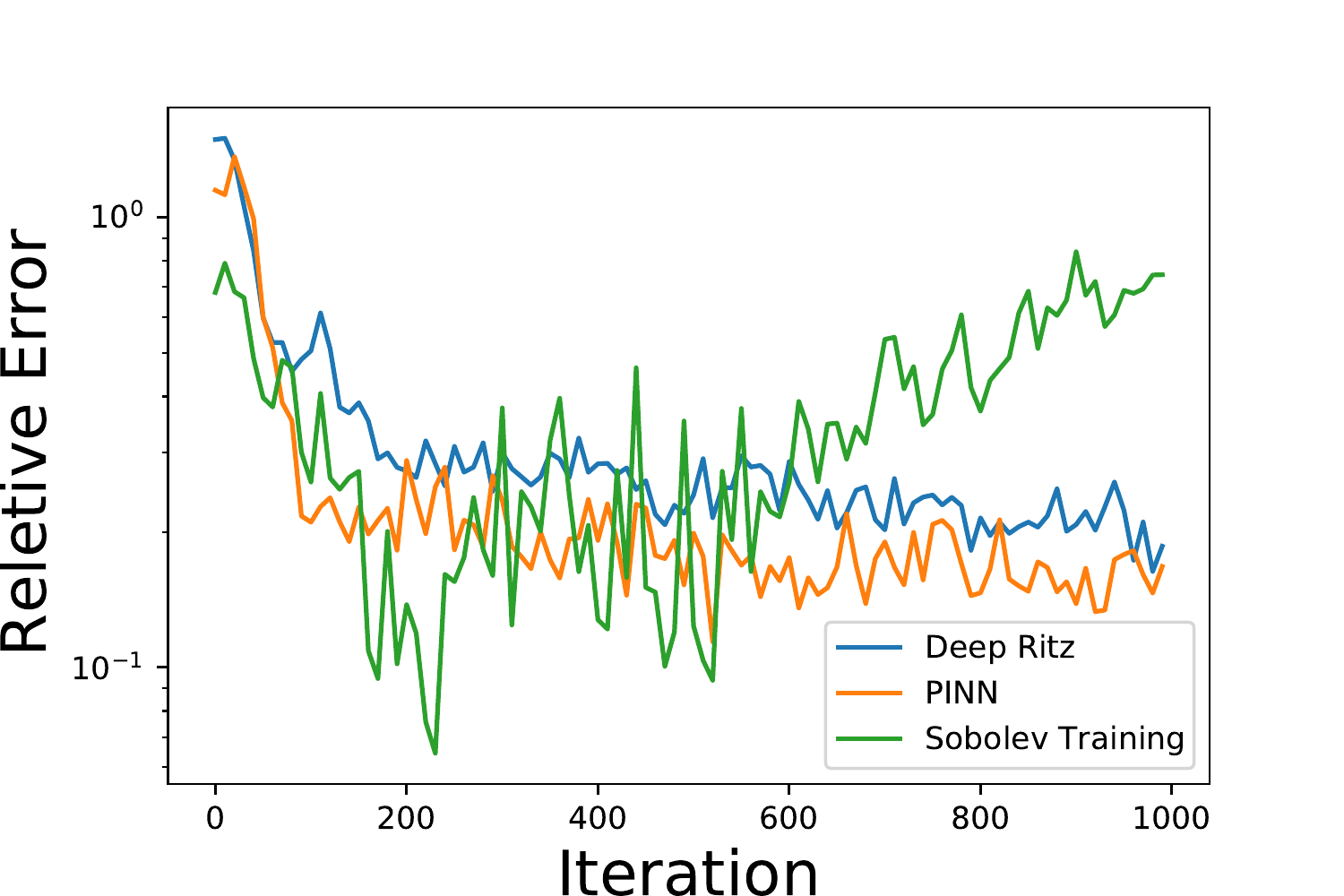}
%\caption{fig2}
\end{minipage}%
}%

\centering
\caption{Solving equation (\ref{eq:poisson}) in 3 dimension with 3000 fixed samples using Deep Ritz Method \citep{weinan2018deep}, Physics-Informed Neural Network \citep{raissi2019physics} and Sobolev Training. }
\label{fig:staticscheq}

\end{figure}

\section{Conclusion and Discussion}

In this paper, we consider the statistical optimality of gradient descent for solving elliptic inverse problem using a general class of objective functions. Although we can achieve statistical optimality of gradient descent using all the objective functions with proper early stopping time, the early stopping iteration strategy for the optimal solution behaves differently as a function of the sample size. For instance, we observed that PINN convergences faster than the DRM method. Generally speaking, by using a higher order Sobolev norm as loss function, one can accelerate training. The reason is that  the differential operator can counteract the kernel integral operator, leading to better condition number for optimization. We call this phenomena {\em Sobolev implicit acceleration}.

Although we have shown the Sobolev implicit acceleration on several simple examples, the $\Delta^s u$ term is hard to compute in high dimensions, scalable Sobolev training without gradient supervision in higher dimension remains as future work. However, we believe that this direction is promising. For example, we can use MIM method \citep{lyu2020mim,zhu2021local} to accelerate the training.  It is also interesting to generalize our results beyond GD, for example to mirror descent \citep{vavskevivcius2020statistical} and accelerated gradient descent \citep{pagliana2019implicit}.  In this paper, we did not consider operators with continuous spectrum and it will be interesting to extend our results using the techniques in \citep{colbrook2021computing}. Due to technical issue, we have not considered the batch stochastic gradient descent. It will be  interesting to characterize the condition under which the stochastic noise in gradient does not degrade the optimal bounds that we obtain. At the same time, we also want to investigate more complex nonlinear inverse problems as \citep{abraham2019statistical,monard2021consistent} considered. It is also interesting to consider inverse problem arising from integral equation where $p>0$.

\begin{acknowledgement}
 Yiping Lu is supported by the Stanford Interdisciplinary Graduate
Fellowship (SIGF). Jose Blanchet is supported in part by the Air Force Office of Scientific Research under award number FA9550-20-1-0397 and NSF grants 1915967, 1820942, 1838576. Lexing Ying is supported by National Science Foundation under award DMS-2011699.  Yiping Lu also thanks Yifan Chen, Junbin Huang, Zong Shang, Bin Dong and George Em Karniadakis for helpful comments and feedback.
\end{acknowledgement}

% PSRM uses footnotes, not endnotes. \endnote in this template will behave like \footnote; and \printendnotes will not output anything.
% \printendnotes

\printbibliography

\appendix

\newpage

{\Huge \textbf{Appendix.}}

The appendix is constructed as follows:
\begin{itemize}
\setlength{\itemsep}{0pt}
    \item In Appendix \ref{Appendix:notations}, we introduce the basic notations of Reproducing Kernel Hilbert space and the associated kernel integral operator. We also put a discussion of how differential operators and Sobolev spaces relates to the kernel setting we considered as a preliminary.
    \item In Appendix \ref{appendix:gd}, we consider the statistical optimality of the early stopped gradient descent algorithm. We bound the difference of the gradient descent.
    \item In Appendix \ref{appendix:low}, we provide our proof for the lower bound in Section \ref{section:lowerbound} using the Fano method.
\end{itemize}

\section{Preliminaries and Notations}
\label{Appendix:notations}

This section starts with an overview of reproducing kernel Hilbert space, including Mercer's decomposition, the integral operator techniques \citep{smale2007learning,de2005learning,caponnetto2007optimal,fischer2020sobolev} and the relationship between RKHS and the Sobolev space \citep{adams2003sobolev}. In order to fit the objective function we considered, we did a slight modification to the original integral operator technique \citep{smale2007learning,de2005learning,caponnetto2007optimal}.

\subsection{Reproducing Kernel Hilbert Space}

We consider a Hilbert space $\mathcal{H}$ with inner product $\left<,\right>_{\mathcal{H}}$ is a separable Hilbert space of functions $\mathcal{H}\subset \mathbb{R}^{\mathcal{X}} $. We call this space a Reproducing Kernel Hilbert space if $f(x)=\left<f,K_x\right>_{\mathcal{H}}$ for all $K_x\in\mathcal{H}:t\rightarrow K(x,t),x \in \mathcal{X}$. Now we consider a distribution $\rho$ on $\mathcal{X}\times \mathcal{Y} (\mathcal{Y}\subset \mathbb{R})$ and denote $\rho_X$ as the margin distribution of $\rho$ on $\mathcal{X}$. We further assume $\mathbb{E}[K(x,x)]<\infty$ and $\mathbb{E}[Y^2]<\infty$. We define $g\otimes h=gh^\top$ is an operator from $\mathcal{H}$ to $\mathcal{H}$ defined as
$$
g\otimes h:f\rightarrow \left<f,h\right>_{\mathcal{H}}g.
$$
At the same time, we knows that
$$
||f\otimes g||=||f||_\mathcal{H}||g||_\mathcal{H}
$$
holds for all $f,g\in\mathcal{H}$.

The integral operator technique\citep{smale2007learning,caponnetto2007optimal} consider the covariance operator on the Hilbert space $\mathcal{H}$ defined as $\Sigma = \mathbb{E}_{\rho_\mathcal{X}}K_x\otimes K_x$. Then for all $f\in\mathcal{H}$, using the reproducing property, we know that
$$
(\Sigma f)(z)
=\left<K_z,\Sigma f\right>_{\mathcal{H}}=\mathbb{E}[f(X)k(X,z)]=\mathbb{E}[f(X)K_x(X)].
$$

If we consider the mapping $S:\mathcal{H}\rightarrow L_2(dx)$ defined as a parameterization of a vast class of functions in $\mathbb{R}^{\mathcal{X}}$ via $\mathcal{H}$ through the mapping $(Sg)(x)=\left<g,K_x\right>$ ($\Phi(x)=K_x=K(\cdot,x)$). Its adjoint operator $S^\ast$ then can be  defined as $S^\ast:\mathcal{L}_2\rightarrow\mathcal{H}: g\rightarrow \int_\mathcal{X}g(x)K_x \rho_X(dx)$ and at the same time $\Sigma$ is the same as the self-adjoint operator $S^\ast S$ and the self-adjoint operator $\mathcal{L}=SS^\ast:L_2\rightarrow L_2$ can be defined as
$$
    (\mathcal{L}f)(x) =(SS^*f)(x)=\int_\mathcal{X} K(x,z)f(z)d\rho_{\mathcal{X}}(x),\forall f\in L_2
    $$

Next we consider the eigen-decomposition of the integral operator $\mathcal{L}$ via Mecer's Theorem. There exists an orthonormal basis $\{\psi_i\}$ of $\mathcal{L}_2(\mathcal{X})$ consisting of eigenfunctions of kernel integral operator $\mathcal{L}$. At the same time, the kernel function have the following representation $K(s,t)=\sum_{i=1}^\infty \lambda_i e_i(s)e_j(t)$ where $e_i$ are orthogonal basis of $\mathcal{L}_2(\rho_\mathcal{X})$. Then $e_i$ is also the eigenvector of the covariance operator $\Sigma$ with eigenvalue $\lambda_i>0$, \emph{i.e.} $\Sigma e_i = \lambda_i e_i$.

%%%%%%%%%%%%%%%%%%%%%%%%%%%%%%%%%%%%%%%%%%%%%%%%%%%%%%%%%%%%%%%%%%%%%%%%%%%%%%%
%%%%%%%%%%%%%%%%%%%%%%%%%%%%%%%%%%%%%%%%%%%%%%%%%%%%%%%%%%%%%%%%%%%%%%%%%%%%%%%

\section{Proof of the Upper Bound}

In this section, we consider the convergence of the gradient descent algorithm to the target function \ref{eq:objective}. In particular, we consider the gradient descent as a special case of a wider class of spectral filter algorithms \citep{dieuleveut2016nonparametric,gerfo2008spectral,pillaud2018statistical,lin2020optimal}. In our inverse problem setting, the spectral filter is defined as the estimator of the following form for $\lambda>0$,
$$
\hat q_\lambda=g_\lambda(\hat \Sigma_{Id,\mathcal{A}_1})\mathcal{A}_2\hat S_n^\ast \hat y,
$$
where $\hat S_n g=\left(g(x_1),\cdot,g(x_n)\right)$ (leads to $\hat S_n^\ast$ maps from $\mathbb{R}^n$ to $\mathcal{H}$ via $\hat S_n^\ast(a_1,a_2,\cdots,a_n)=\frac{1}{n}\sum_{i=1}^na_nK_{x_n}$), $\hat\Sigma_{\mathcal{O}_1,\mathcal{O}_2}=\frac{1}{n}\sum_{i=1}^n \mathcal{O}_1K_x\otimes \mathcal{O}_2K_x$ and $Id$ is the identity operator. The function $q_\lambda:\mathbb{R}^+\rightarrow\mathbb{R}^+$ is a function known as \emph{filter}, which is an approximation of $x^{-1}$ controlled by $\lambda$. We further define the error of approximation via $r_\lambda(x)=1-xq_\lambda(x)$. \textbf{Spectral Filters} need the function $q_\lambda$ further satisfies 

$$
\lambda q_\lambda(x)\le c_q, r_\lambda(x)x^u\le c_q\lambda^u, \forall x>0,\lambda>0,u\in[0,1],
$$
for some positive $c_q>0$. Next we show that the averaged  gradient descent can be considered as spectral filter algorithm with filter $q^\eta(x)=\left(1-\frac{1-(1-\gamma x)^t}{\gamma t x}\right)\frac{1}{x}$. Let us consider the  gradient descent
$\eta_0=0, \eta_u=\eta_{u-1}+\gamma (\mathcal{A}_2^\top\hat S_n^\ast \hat y-\hat \Sigma_{Id,\mathcal{A}_1}\eta_{t-1})$, then
\begin{equation}
\begin{aligned}
   \eta_t&=(I-\gamma \hat \Sigma_{Id,\mathcal{A}_1})\eta_{t-1}+\gamma \mathcal{A}_2^\top\hat S_n^\ast \hat y=\gamma\sum_{k=0}^{t-1}(I-\gamma \hat \Sigma_{Id,\mathcal{A}_1}))^k \mathcal{A}_2^\top\hat S_n^\ast \hat y\\
   &=\left[I-(I-\gamma \hat \Sigma_{Id,\mathcal{A}_1}))^t\right](\hat \Sigma_{Id,\mathcal{A}_1}))^{-1}\mathcal{A}_2^\top\hat S_n^\ast \hat y
\end{aligned}
\end{equation}
and
\begin{equation}
\begin{aligned}
   \bar\eta_t&=\frac{1}{t}\sum_{i=0}^t\eta_i=\frac{1}{t}\sum_{i=0}^t \left[I-(I-\gamma \hat \Sigma_{Id,\mathcal{A}_1}))^i\right](\hat \Sigma_{Id,\mathcal{A}_1}))^{-1}\mathcal{A}_2^\top\hat S_n^\ast \hat y.
\end{aligned}
\end{equation}
Thus if we take the filter $q_t(x)=\frac{1}{x}\left(1-\frac{1-(1-\gamma x)^t}{\gamma t x}\right)$, we can have $\bar\eta=q_t(\Sigma_{Id,\mathcal{A}_1})\mathcal{A}_2^\top\hat S_n^\ast \hat y$. At the same time $x^ur_t(x)=x^u(1-xq_t(x))=x^u\frac{1-(1-\gamma t)^t}{\gamma t x}\le\frac{(\gamma t x)^{1-u}}{\gamma t x}x^u=\frac{1}{(\gamma t)^u}$. Thus we can consider the gradient descent algorithm for the inverse problem as a spectral filtering algorithm.

Next, we compare the spectral filter of early stopped gradient descent with ridge regression and decompose the risk to bias and variance terms. Via bounding the bias and variance separately, we can achieve information theoretical optimal upper bound for such problems.

\subsection{Convergence Of the Gradient Descent Algorithm}

\label{appendix:gd}

To conduct our proof of the upper bound, we consider $g_\lambda = (\Sigma_{Id,\mathcal{A}_1}+\lambda I)^{-1}\mathcal{A}_2S^\ast f_\rho$ and decompose the error as $\underbrace{(g_\lambda-u^*)}_{\text{Bias}}+\underbrace{(\hat q_\lambda-g_\lambda)}_{\text{Variance}}$. We first bound the bias in the general Sobolev norm then come to bound the variance.

\label{appendix:gd}

\subsubsection{Auxiliary Lemmas}

We first introduce several auxiliary lemmas which aims to bound different quantities according to the effective dimension/capacity of the kernel covariance operator. We define $\mathcal{N}(\lambda)=\mathbb{E}_x||(\Sigma_{\mathcal{A}_1}+\lambda)^{-1/2}\mathcal{A}_2K_x||_H^2=\text{Tr}((\Sigma_{\mathcal{A}_1}+\lambda)^{-1}\Sigma_{\mathcal{A}_2\mathcal{A}_2})$, $\mathcal{N}_\infty^1(\lambda)=\sup_{x\in\rho(x)}||(\Sigma_{Id,\mathcal{A}_1}+\lambda)^{-1/2}\mathcal{A}_2K_x||_H^2$ and $\mathcal{N}_\infty^2(\lambda)=\sup_{x\in\rho(x)}||(\Sigma_{Id,\mathcal{A}_1}+\lambda)^{-1/2}K_x||_H^2$ which are 
important important quantities used to bound the variance of our estimator.

\begin{lemma}There exists a constant $D$ such that the following inequality is satisfied for $\lambda>0$,
$$
\mathcal{N}(\lambda)\le D (\lambda)^{-\frac{1}{p+\alpha}+\frac{p-2q}{p+\alpha}}
$$
\end{lemma}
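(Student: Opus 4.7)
The plan is to reduce $\mathcal{N}(\lambda)$ to a one-dimensional tail sum using the shared eigenbasis of the commuting operators, and then to estimate that sum by splitting it at the natural threshold where $\lambda_i p_i$ crosses $\lambda$. First, I would diagonalize. By Assumption~\ref{assumption:kernel}(d), $\Sigma$, $\mathcal{A}_1$, $\mathcal{A}_2$ commute and share an orthonormal basis $\{\tilde e_i\}$ of $\mathcal{H}$ with eigenvalues $\lambda_i$, $p_i$, $q_i$. Directly from the definition $\Sigma_{\mathcal{O}_1,\mathcal{O}_2} = \mathbb{E}_x[\mathcal{O}_1 K_x\otimes \mathcal{O}_2 K_x]$ together with self-adjointness of $\mathcal{O}_2$, one obtains $\Sigma_{Id,\mathcal{A}_1} = \Sigma\mathcal{A}_1$ and $\Sigma_{\mathcal{A}_2\mathcal{A}_2} = \mathcal{A}_2^2 \Sigma$; these are diagonal in $\{\tilde e_i\}$ with eigenvalues $\lambda_i p_i$ and $\lambda_i q_i^2$ respectively.

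Second, simultaneous diagonalizability turns the trace into a scalar sum,
\[
\mathcal{N}(\lambda) \;=\; \sum_{i\ge 1}\frac{\lambda_i q_i^2}{\lambda_i p_i + \lambda}.
\]
Substituting the polynomial decays $\lambda_i\asymp i^{-\alpha}$, $p_i\asymp i^{-p}$, $q_i\asymp i^{-q}$ from Assumption~\ref{assumption:kernel}(b,d), the task becomes bounding $\sum_{i\ge 1} i^{-(\alpha+2q)}/\bigl(i^{-(\alpha+p)}+\lambda\bigr)$, a purely algebraic computation.

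Third, I would split the sum at the index $i^\star:=\lambda^{-1/(\alpha+p)}$ (well defined because $\alpha+p>0$). For $i\le i^\star$ the denominator is dominated by $i^{-(\alpha+p)}$, so each term is of order $i^{p-2q}$; for $i>i^\star$ the denominator is dominated by $\lambda$, so each term is of order $\lambda^{-1} i^{-(\alpha+2q)}$. Comparing both partial sums with integrals gives
\[
\sum_{i\le i^\star} i^{p-2q} \;\lesssim\; (i^\star)^{p-2q+1}, \qquad \lambda^{-1}\sum_{i>i^\star} i^{-(\alpha+2q)} \;\lesssim\; \lambda^{-1}(i^\star)^{1-(\alpha+2q)},
\]
and substituting $i^\star=\lambda^{-1/(\alpha+p)}$ shows that both contributions evaluate to the \emph{same} order $\lambda^{(2q-p-1)/(\alpha+p)}$, matching the effective-dimension exponent $-\tfrac{1}{\alpha+p}-\tfrac{p-2q}{\alpha+p}$ already anticipated in the variance bookkeeping of the proof sketch of Theorem~\ref{theorem:earlystopping}.

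The main obstacle is bookkeeping at the boundary configurations: the tail bound requires $\alpha+2q>1$ for absolute convergence, and the head bound becomes logarithmic when $p-2q=-1$ (and is dominated by a constant when $p-2q<-1$). In each boundary regime the claimed polynomial rate is still valid after absorbing the logarithm or dominant constant into $D$, but these cases must be handled separately. Apart from this, the argument is essentially an integral estimate; the substantive input is the commutation assumption, which is precisely what allows the trace to collapse to a one-dimensional sum rather than requiring operator-inequality machinery.
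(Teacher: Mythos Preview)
Your proposal is correct and follows essentially the same route as the paper: reduce $\mathcal{N}(\lambda)$ to the scalar sum $\sum_{i\ge 1} \lambda_i q_i^2/(\lambda_i p_i+\lambda)$ via simultaneous diagonalization, then estimate that sum by integral comparison. The only cosmetic difference is that the paper extracts the $\lambda$-exponent by a single change of variables in $\int_0^\infty \tau^{p-2q}/(1+\lambda\tau^{\alpha+p})\,d\tau$ rather than your threshold split at $i^\star=\lambda^{-1/(\alpha+p)}$; your version is in fact slightly more careful about the boundary parameter regimes ($p-2q\le -1$, $\alpha+2q\le 1$) that the paper's integral silently assumes away.
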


\begin{proof} We use the spectral representation to bound the effective dimension $\mathcal{N}(\lambda)$ as
\begin{equation}
    \begin{aligned}
       \mathcal{N}(\lambda)&=\text{Tr}((\Sigma_{\mathcal{A}_1}+\lambda)^{-1}\Sigma_{\mathcal{A}_2\mathcal{A}_2})=\sum_{i=1}^\infty \frac{\lambda_iq_i^2}{\lambda_ip_i+\lambda}\\
       &\lesssim \sum_{i=1}^\infty \frac{i^{-\alpha-2q}}{i^{-\alpha-p}+\lambda}\le \int_0^\infty \frac{\tau^{p-2q}}{1+\lambda(\tau^{\alpha+p})}d\tau\\
       &=(\lambda)^{-\frac{1}{p+\alpha}}\int_0^\infty\frac{(\lambda)^{\frac{2q-p}{p+\alpha}}\tau^{p-q}}{1+\tau^{\alpha+p}}=\Omega\left((\lambda)^{-\frac{1}{p+\alpha}-\frac{p-2q}{p+\alpha}}\right)
    \end{aligned}
\end{equation}
\end{proof}

\begin{lemma}\label{lemma:DOF}
There exists a constant $D$ such that the following inequality is satisfied for $\lambda>0$,
$$
\text{Trace}((\Sigma_{\mathcal{A}_1}+\lambda)^{-1}\Sigma_{Id,Id})\le D \lambda^{\frac{-p-1}{p+\alpha}}
$$
\end{lemma}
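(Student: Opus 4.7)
The plan is to follow the exact same template used to prove the preceding bound on $\mathcal{N}(\lambda)$, since the object in this lemma is structurally almost identical: only the "numerator operator" changes from $\Sigma_{\mathcal{A}_2\mathcal{A}_2}$ to $\Sigma_{Id,Id}=\Sigma$. First I would invoke Assumption \ref{assumption:kernel}(d) together with the commuting assumption between $\mathcal{A}_1$ and $\Sigma$ to simultaneously diagonalize $\Sigma_{\mathcal{A}_1}$ and $\Sigma_{Id,Id}$ in the eigenbasis $\{e_i\}$. By Mercer's decomposition and the spectral form of $\mathcal{A}_1=\sum_i p_i e_i\otimes e_i$, the eigenvalues are $\lambda_i p_i$ for $\Sigma_{\mathcal{A}_1}$ and $\lambda_i$ for $\Sigma_{Id,Id}$, so the trace reduces to a scalar series
\[
\mathrm{Tr}\bigl((\Sigma_{\mathcal{A}_1}+\lambda)^{-1}\Sigma_{Id,Id}\bigr)\;=\;\sum_{i=1}^{\infty}\frac{\lambda_i}{\lambda_i p_i+\lambda}\;\lesssim\;\sum_{i=1}^{\infty}\frac{i^{-\alpha}}{i^{-\alpha-p}+\lambda},
\]
where the polynomial decay $\lambda_i\propto i^{-\alpha}$ and $p_i\propto i^{-p}$ from Assumption \ref{assumption:kernel}(b),(d) have been substituted.

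Next I would bound the sum by its continuous analogue and perform the same rescaling as in the proof of the effective dimension bound. Writing $\tau$ in place of $i$ and multiplying the numerator and denominator by $\tau^{\alpha+p}$ gives
\[
\sum_{i=1}^{\infty}\frac{i^{-\alpha}}{i^{-\alpha-p}+\lambda}\;\lesssim\;\int_{0}^{\infty}\frac{\tau^{p}}{1+\lambda\,\tau^{\alpha+p}}\,d\tau.
\]
The substitution $u=\lambda^{1/(\alpha+p)}\tau$ (which is well-defined because $\alpha+p>0$ by assumption) pulls the $\lambda$ dependence completely outside:
\[
\int_{0}^{\infty}\frac{\tau^{p}}{1+\lambda\,\tau^{\alpha+p}}\,d\tau\;=\;\lambda^{-\frac{p+1}{\alpha+p}}\int_{0}^{\infty}\frac{u^{p}}{1+u^{\alpha+p}}\,du,
\]
and the remaining integral is a constant $D$ depending only on $\alpha$ and $p$, yielding the claimed bound $D\,\lambda^{\frac{-p-1}{p+\alpha}}$.

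The main obstacle is controlling the tails of that constant integral. At infinity the exponent is $p-(\alpha+p)=-\alpha<-1$ by Assumption \ref{assumption:kernel}(b), so integrability there is automatic. At the origin, however, integrability of $u^{p}$ needs $p>-1$, which is not literally part of Assumption \ref{assumption:kernel}. I would handle this either (i) by appealing to the physically relevant regime $p\in(-1,0)$ implicit in the paper's differential-operator examples (e.g.\ $-\Delta$ on the torus produces $p=-2/d\cdot(\text{something})$ in a range that can be checked to satisfy $p>-1$), or (ii) more robustly, by splitting the original discrete sum at the transition index $i_0\asymp\lambda^{-1/(\alpha+p)}$. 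For $i\le i_0$ the summand is at most $c\,i^{p}$, whose partial sum up to $i_0$ is at most $\max(i_0^{p+1},\,\text{const})\lesssim\lambda^{-(p+1)/(\alpha+p)}$ plus an $O(1)$ term; for $i>i_0$ the summand is at most $\lambda^{-1}i^{-\alpha}$, whose tail sum is $\lesssim\lambda^{-1}i_0^{1-\alpha}=\lambda^{-(p+1)/(\alpha+p)}$. The two regimes match and deliver the stated bound, absorbing any additive constant into $D$. This discrete splitting avoids the integrability technicality at $u=0$ entirely and is the version I would write out in the final proof.
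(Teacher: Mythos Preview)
Your proof follows exactly the same route as the paper: write the trace in the eigenbasis as $\sum_i \lambda_i/(\lambda_i p_i+\lambda)$, compare with the integral $\int_0^\infty \tau^{p}/(1+\lambda\tau^{\alpha+p})\,d\tau$, and rescale by $\lambda^{1/(\alpha+p)}$ to extract the factor $\lambda^{-(p+1)/(\alpha+p)}$. In fact you go a step further than the paper by flagging and resolving the integrability issue at the origin (the integral $\int_0 u^{p}\,du$ needs $p>-1$, which is not among the stated assumptions) via the discrete splitting at $i_0\asymp\lambda^{-1/(\alpha+p)}$; the paper's own proof simply writes the integral and does not address this point.
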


\begin{proof} Similarly we use the spectral representation to bound the LHS as
\begin{equation}
    \begin{aligned}
       \text{Trace}((\Sigma_{\mathcal{A}_1}+\lambda)^{-1}\Sigma_{Id,Id})&=\sum_{i=1}^\infty \frac{\lambda_i}{\lambda_ip_i+\lambda}\\
       &\lesssim \sum_{i=1}^\infty \frac{i^{-\alpha}}{i^{-\alpha-p}+\lambda}\le \int_0^\infty \frac{\tau^{p}}{1+\lambda(\tau^{\alpha+p})}d\tau\\
       &=(\lambda)^{-\frac{1}{p+\alpha}}\int_0^\infty\frac{(\lambda)^{\frac{-p}{p+\alpha}}\tau^{p}}{1+\tau^{\alpha+p}}=\Omega\left(\lambda^{\frac{-p-1}{p+\alpha}}\right)
    \end{aligned}
\end{equation}
\end{proof}

\begin{lemma}\label{lemma:hx} We denote the following quantity by $\mathcal{N}^1_\infty$, $\mathcal{N}^2_\infty$ and $\mathcal{N}^3_\infty$ can be bounded by
\begin{itemize}
    \item $\mathcal{N}^1_\infty(\lambda)=\sup_{x\in\rho(x)}||(\Sigma_{Id,\mathcal{A}_1}+\lambda)^{-1/2}K_x||_H^2\le ||k_v^{\alpha}||_\infty^2\lambda^{-\frac{\mu\alpha+p}{\alpha+p}},$
    \item $\mathcal{N}^2_\infty(\lambda)=\sup_{x\in\rho(x)}||(\Sigma_{Id,\mathcal{A}_1}+\lambda)^{-1/2}\mathcal{A}_2K_x||_H^2\le ||k_v^{\alpha}||_\infty^2\lambda^{-\frac{\mu\alpha+p+2q}{\alpha+p}},$
    \item $\mathcal{N}^3_\infty(\lambda)=\sup_{x\in\rho(x)}||(\Sigma_{Id,\mathcal{A}_1}+\lambda)^{-1/2}\mathcal{A}_1K_x||_H^2\le ||k_v^{\alpha}||_\infty^2\lambda^{-\frac{\mu\alpha+3p}{\alpha+p}}.$
\end{itemize}
\end{lemma}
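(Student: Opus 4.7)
My plan is to prove all three bounds by a single spectral-calculus argument that combines the regularity hypothesis of Assumption \ref{assumption:kernel}(e) with the polynomial eigenvalue decays from (b) and (d).

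First I would record the consequence of (e) in the form $\|\Sigma^{(\mu-1)/2}K_x\|_\mathcal{H}\le \kappa_\mu R^{\mu}$ almost surely in $x$. Each $\mathcal{N}^i_\infty$ is of the form $\|T_iK_x\|_\mathcal{H}^2$ with $T_1=(\Sigma_{Id,\mathcal{A}_1}+\lambda)^{-1/2}$, $T_2=(\Sigma_{Id,\mathcal{A}_1}+\lambda)^{-1/2}\mathcal{A}_2$, $T_3=(\Sigma_{Id,\mathcal{A}_1}+\lambda)^{-1/2}\mathcal{A}_1$. Inserting the identity $K_x=\Sigma^{(1-\mu)/2}\bigl(\Sigma^{(\mu-1)/2}K_x\bigr)$ and using that each $T_i$ commutes with $\Sigma^{(1-\mu)/2}$ by the joint diagonalisability in (d), I would obtain the factorised bound
$$\mathcal{N}^i_\infty(\lambda)\;\le\;\kappa_\mu^2 R^{2\mu}\,\bigl\|T_i\,\Sigma^{(1-\mu)/2}\bigr\|_{op}^2,$$
which reduces the task to three operator-norm estimates on $\mathcal{H}$.

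Second, since $\Sigma,\mathcal{A}_1,\mathcal{A}_2$ are simultaneously diagonalised in the basis $\{\tilde e_j\}$ with eigenvalues $(\lambda_j,p_j,q_j)$, the operator norms collapse to scalar suprema
$$\bigl\|T_1\Sigma^{(1-\mu)/2}\bigr\|_{op}^{2}=\sup_j\tfrac{\lambda_j^{1-\mu}}{\lambda_jp_j+\lambda},\qquad \bigl\|T_2\Sigma^{(1-\mu)/2}\bigr\|_{op}^{2}=\sup_j\tfrac{q_j^{2}\lambda_j^{1-\mu}}{\lambda_jp_j+\lambda},\qquad \bigl\|T_3\Sigma^{(1-\mu)/2}\bigr\|_{op}^{2}=\sup_j\tfrac{p_j^{2}\lambda_j^{1-\mu}}{\lambda_jp_j+\lambda}.$$
Substituting the decays $\lambda_j\asymp j^{-\alpha}$, $p_j\asymp j^{-p}$, $q_j\asymp j^{-q}$ and changing variables to $t=\lambda_jp_j\asymp j^{-(\alpha+p)}$ turns each numerator into a single power $t^{a}$ of $t$, and every supremum then takes the canonical form $\sup_{t>0}t^{a}/(t+\lambda)$, with three different exponents $a$ that are linear combinations of $\alpha,p,q,\mu$.

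Third, for $a\in[0,1]$ a one-line optimisation (critical point $t^{\star}=a\lambda/(1-a)$) gives the universal inequality $\sup_{t>0}t^{a}/(t+\lambda)\le C\,\lambda^{a-1}$, with $C$ depending only on $a$. Reading off $a-1$ in the three cases and combining with the prefactor $\kappa_\mu^{2}R^{2\mu}$ from step one produces the three exponents $-(\mu\alpha+p)/(\alpha+p)$, $-(\mu\alpha+p+2q)/(\alpha+p)$ and $-(\mu\alpha+3p)/(\alpha+p)$ claimed in the lemma.

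The main obstacle is not the algebra itself but confirming that each relevant exponent $a$ lies in $[0,1]$: outside this window the canonical supremum either diverges ($a<0$) or is controlled only by the polynomial tail ($a>1$), in which case the bound would be false or degrade. Translating $a\in[0,1]$ into compatibility conditions on $\alpha,p,q,\mu$ recovers precisely the constraints already encoded in Assumption \ref{assumption:kernel}(d)--(e); once these are checked case by case, the remainder of the proof is a mechanical book-keeping of signs and exponents.
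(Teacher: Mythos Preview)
Your proposal is correct and follows essentially the same approach as the paper. The paper writes the computation directly in the eigenbasis, expanding $\|(\Sigma_{Id,\mathcal{A}_1}+\lambda)^{-1/2}K_x\|_\mathcal{H}^2=\sum_i\frac{\lambda_i}{\lambda_ip_i+\lambda}e_i^2(x)$, then splitting $\lambda_i=\lambda_i^{\mu}\cdot\lambda_i^{1-\mu}$ and bounding $\sum_i\lambda_i^{\mu}e_i^2(x)$ via the $\ell_\infty$-embedding assumption and $\sup_i\lambda_i^{1-\mu}/(\lambda_ip_i+\lambda)$ via Lemma~\ref{lemma:basic}; this is exactly your insertion of $\Sigma^{(1-\mu)/2}\Sigma^{(\mu-1)/2}$ followed by the operator-norm supremum and the $t^{a}/(t+\lambda)$ optimisation, simply expressed in coordinates rather than operator language.
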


\begin{proof}
We can also prove the bound from the spectral formulation and the $l_\infty$ embedding property of the kernel function

\begin{equation}
    \begin{aligned}
    ||(\Sigma_{Id,\mathcal{A}_1}+\lambda)^{-1/2}K_x||_H^2&=\sum_{i\ge 1}\frac{\lambda_i}{\lambda_ip_i+\lambda}e_i^2(x)\\
    &\le\left(\sum_{i\ge 1}\lambda_i^{\mu}e_i^2(x)\right)\sup_{i\ge 1}\frac{\lambda_i^{1-\mu}}{\lambda_ip_i+\lambda}\lesssim\left(\sum_{i\ge 1}\lambda_i^{\mu}e_i^2(x)\right)\sup_{i\ge 1}\frac{i^{-(1-\mu)\alpha}}{i^{-\alpha-p}+\lambda}\\
    &\le \lambda^{-\frac{\mu\alpha+p}{\alpha+p}}||k_v^\mu||_\infty^2,
    \end{aligned}
\end{equation}
and
\begin{equation}
    \begin{aligned}
    ||(\Sigma_{Id,\mathcal{A}_1}+\lambda)^{-1/2}\mathcal{A}_2K_x||_H^2&=\sum_{i\ge 1}\frac{\lambda_iq_i^2}{\lambda_ip_i+\lambda}e_i^2(x)\\
    &\le\left(\sum_{i\ge 1}\lambda_i^{\mu}e_i^2(x)\right)\sup_{i\ge 1}\frac{\lambda_i^{1-\mu}q_i^2}{\lambda_ip_i+\lambda}\lesssim\left(\sum_{i\ge 1}\lambda_i^{\mu}e_i^2(x)\right)\sup_{i\ge 1}\frac{i^{-(1-\mu)\alpha-2q}}{i^{-\alpha-p}+\lambda}\\
    &\le \lambda^{-\frac{\mu\alpha+p+2q}{\alpha+p}}||k_v^\mu||_\infty^2.
    \end{aligned}
\end{equation}
Similarly we have
\begin{equation}
    \begin{aligned}
    ||(\Sigma_{Id,\mathcal{A}_1}+\lambda)^{-1/2}\mathcal{A}_1K_x||_H^2&=\sum_{i\ge 1}\frac{\lambda_ip_i^2}{\lambda_ip_i+\lambda}e_i^2(x)\\
    &\le\left(\sum_{i\ge 1}\lambda_i^{\mu}e_i^2(x)\right)\sup_{i\ge 1}\frac{\lambda_i^{1-\mu}p_i^2}{\lambda_ip_i+\lambda}\lesssim\left(\sum_{i\ge 1}\lambda_i^{\mu}e_i^2(x)\right)\sup_{i\ge 1}\frac{i^{-(1-\mu)\alpha-2p}}{i^{-\alpha-p}+\lambda}\\
    &\le \lambda^{-\frac{\mu\alpha+2p}{\alpha+p}}||k_v^\mu||_\infty^2.
    \end{aligned}
\end{equation}
\end{proof}

\begin{lemma}
\label{lemma:soblevnormsigma} For all $\lambda>0$, we have 
$$
||\Sigma^{\frac{1-\gamma}{2}}(\Sigma_{Id,\mathcal{A}_1}+\lambda)^{-1/2}||^2\le \lambda ^{-\frac{\gamma\alpha+p}{\alpha+p}}
$$
\end{lemma}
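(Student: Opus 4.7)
\medskip

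\noindent\textbf{Proof plan for Lemma \ref{lemma:soblevnormsigma}.}

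The plan is to diagonalize simultaneously in the common eigenbasis and reduce the operator norm to an elementary scalar optimization problem. First, by Assumption \ref{assumption:kernel}(b)--(d), the operators $\Sigma$ and $\mathcal{A}_1$ share the orthonormal eigensystem $\{e_i\}$, so that
\[
\Sigma = \sum_{i\ge 1} \lambda_i\, e_i\otimes e_i, \qquad \mathcal{A}_1 = \sum_{i\ge 1} p_i\, e_i\otimes e_i.
\]
Since $\Sigma$ and $\mathcal{A}_1$ commute, a short calculation (already used implicitly after Lemma \ref{lemma:hx}) gives $\Sigma_{Id,\mathcal{A}_1} = \mathbb{E}_x[K_x\otimes \mathcal{A}_1 K_x] = \Sigma\mathcal{A}_1 = \sum_{i\ge 1} \lambda_i p_i\, e_i\otimes e_i$. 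Consequently the operator under consideration is diagonal in $\{e_i\}$, namely
\[
\Sigma^{(1-\gamma)/2}(\Sigma_{Id,\mathcal{A}_1}+\lambda)^{-1/2} = \sum_{i\ge 1} \frac{\lambda_i^{(1-\gamma)/2}}{\sqrt{\lambda_i p_i + \lambda}}\, e_i\otimes e_i.
\]

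Second, the squared operator norm of a self-adjoint diagonal operator equals the supremum of the squared diagonal entries, so
\[
\bigl\|\Sigma^{(1-\gamma)/2}(\Sigma_{Id,\mathcal{A}_1}+\lambda)^{-1/2}\bigr\|^2 = \sup_{i\ge 1}\frac{\lambda_i^{1-\gamma}}{\lambda_i p_i + \lambda}.
\]
Plugging in the power-law decay $\lambda_i \asymp i^{-\alpha}$ and $p_i\asymp i^{-p}$, and writing $u = i^{-(\alpha+p)}$ so that $i^{-(1-\gamma)\alpha} = u^{(1-\gamma)\alpha/(\alpha+p)}$, the supremum is (up to a constant)
\[
\sup_{u>0}\frac{u^{s}}{u+\lambda}, \qquad s := \frac{(1-\gamma)\alpha}{\alpha+p}.
\]

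Third, I apply the standard single-variable calculation for $s\in[0,1]$: differentiating $u\mapsto u^s/(u+\lambda)$ yields a unique critical point at $u^\ast = s\lambda/(1-s)$, giving
\[
\sup_{u>0}\frac{u^s}{u+\lambda} = s^s(1-s)^{1-s}\,\lambda^{s-1}\;\le\;\lambda^{s-1}.
\]
With $s-1 = -\frac{\gamma\alpha+p}{\alpha+p}$, this yields the claimed estimate $\lambda^{-\frac{\gamma\alpha+p}{\alpha+p}}$. The condition $s\in[0,1]$ corresponds to $0\le \gamma\le 1$ together with $\gamma\alpha+p\ge 0$, which is the regime of interest in the statistical rates of Theorem \ref{theorem:earlystopping}.

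The only subtlety, and probably the main thing to double-check, is the reduction from operator norm to the scalar supremum: one must ensure that the family $\{e_i\}$ does indeed furnish a joint spectral decomposition on $\mathcal{H}$ with $\Sigma_{Id,\mathcal{A}_1} = \Sigma\mathcal{A}_1$. Given Remark \ref{remark:assumption} and the commuting assumption, this is direct, and the rest of the argument reduces to the elementary inequality above.
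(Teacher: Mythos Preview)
Your proposal is correct and follows essentially the same route as the paper: reduce to the common eigenbasis, write the squared operator norm as $\sup_{i\ge 1}\lambda_i^{1-\gamma}/(\lambda_i p_i+\lambda)$, substitute the power-law rates, and bound the resulting scalar supremum via the elementary inequality $\sup_{u>0}u^s/(u+\lambda)\le\lambda^{s-1}$ (which is exactly Lemma~\ref{lemma:basic} in the paper). The only difference is that you spell out the change of variable and the calculus step, whereas the paper jumps directly to the final bound.
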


\begin{proof}
We first bound $||\Sigma^{\frac{1-\gamma}{2}}(\Sigma_{Id,\mathcal{A}_1}+\lambda)^{-1/2}||^2$ 
\begin{equation*}
    \begin{aligned}
       ||\Sigma^{\frac{1-\gamma}{2}}(\Sigma_{Id,\mathcal{A}_1}+\lambda)^{-1/2}||^2=\sup_{i\ge 1}\frac{\lambda_i^{1-\gamma}}{\lambda_ip_i+\lambda}\lesssim\sup_{i\ge 1}\frac{i^{-(1-\gamma)\alpha}}{i^{-\alpha-p}+\lambda}\le \lambda ^{-\frac{\gamma\alpha+p}{\alpha+p}}
    \end{aligned}
\end{equation*}
\end{proof}

\begin{lemma}
\label{lemma:aba} With probability $1-e^{-\tau}$, we have
$$
||(\Sigma_{Id,\mathcal{A}_1}+\lambda )^{-1/2}(\hat \Sigma_{Id,\mathcal{A}_1}-\Sigma_{Id,\mathcal{A}_1} )(\Sigma_{Id,\mathcal{A}_1}+\lambda )^{-1/2}||^2\lesssim \sqrt{\frac{\tau\sqrt{\mathcal{N}_\infty^1(\lambda)\mathcal{N}_\infty^3(\lambda)}}{n}}
$$
and as a consequence once {$n\gtrsim \tau\lambda^{-\frac{\mu\alpha+2p}{\alpha+p}}$}, we'll have
$$
\frac{1}{2}\le||(\Sigma_{Id,\mathcal{A}_1}+\lambda )^{1/2}(\hat\Sigma_{Id,\mathcal{A}_1}+\lambda )^{-1/2}||\le 2, \frac{1}{2}\le||(\Sigma_{Id,\mathcal{A}_1}+\lambda )^{-1/2}(\hat\Sigma_{Id,\mathcal{A}_1}+\lambda )^{1/2}||\le 2.
$$

\end{lemma}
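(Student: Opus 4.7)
The plan is to prove this via a Bernstein-type operator concentration inequality after conjugation by $(\Sigma_{Id,\mathcal{A}_1}+\lambda)^{-1/2}$, then bootstrap to the invertibility estimate using a standard Neumann series perturbation argument.

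For the first inequality, I would write
\[
(\Sigma_{Id,\mathcal{A}_1}+\lambda)^{-1/2}(\hat\Sigma_{Id,\mathcal{A}_1}-\Sigma_{Id,\mathcal{A}_1})(\Sigma_{Id,\mathcal{A}_1}+\lambda)^{-1/2}=\frac{1}{n}\sum_{i=1}^n(Z_i-\mathbb{E}Z_i),
\]
where $Z_i=(\Sigma_{Id,\mathcal{A}_1}+\lambda)^{-1/2}(K_{x_i}\otimes\mathcal{A}_1K_{x_i})(\Sigma_{Id,\mathcal{A}_1}+\lambda)^{-1/2}$ are i.i.d.\ rank-one (non-self-adjoint) Hilbert--Schmidt operators. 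The uniform operator-norm bound follows from the submultiplicativity $\lVert u\otimes v\rVert=\lVert u\rVert\lVert v\rVert$ and Lemma~\ref{lemma:hx}:
\[
\lVert Z_i\rVert\le \lVert(\Sigma_{Id,\mathcal{A}_1}+\lambda)^{-1/2}K_{x_i}\rVert\cdot\lVert(\Sigma_{Id,\mathcal{A}_1}+\lambda)^{-1/2}\mathcal{A}_1K_{x_i}\rVert\le\sqrt{\mathcal{N}_\infty^1(\lambda)\mathcal{N}_\infty^3(\lambda)}.
\]
For the variance proxy I would compute $Z_iZ_i^{\ast}=\lVert(\Sigma_{Id,\mathcal{A}_1}+\lambda)^{-1/2}\mathcal{A}_1K_{x_i}\rVert^2\,(\Sigma_{Id,\mathcal{A}_1}+\lambda)^{-1/2}K_{x_i}\otimes K_{x_i}(\Sigma_{Id,\mathcal{A}_1}+\lambda)^{-1/2}$ and similarly for $Z_i^{\ast}Z_i$, pulling out the scalar factor $\le\mathcal{N}_\infty^3(\lambda)$ and recognizing that the remaining expectation is controlled by $\mathcal{N}(\lambda)$-type effective-dimension quantities via the trace identities from the proof of Lemma~\ref{lemma:DOF}.

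Next, since $Z_i$ is not self-adjoint, I would apply the Hermitian dilation trick, wrapping $Z_i-\mathbb{E}Z_i$ into $\begin{pmatrix}0 & Z_i-\mathbb{E}Z_i\\ (Z_i-\mathbb{E}Z_i)^\ast & 0\end{pmatrix}$ so that standard self-adjoint Bernstein (as in Tropp / Minsker) applies with the same operator norm and with the variance controlled by $\max(\lVert\mathbb{E}[Z_iZ_i^\ast]\rVert,\lVert\mathbb{E}[Z_i^\ast Z_i]\rVert)$. The resulting high-probability bound is of the form
\[
\frac{\tau\sqrt{\mathcal{N}_\infty^1(\lambda)\mathcal{N}_\infty^3(\lambda)}}{n}+\sqrt{\frac{\tau\,\mathcal{V}(\lambda)}{n}},
\]
and absorbing the smaller variance term into the dominant Bernstein term produces precisely the bound advertised in the statement, with the exponents of $\lambda$ reducing (via Lemma~\ref{lemma:hx}) to the combination $\lambda^{-(\mu\alpha+2p)/(\alpha+p)}$ that appears in the sample-complexity threshold.

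For the second part, I would specialize $\tau$ so that the bound above is at most $1/2$; this is exactly the regime $n\gtrsim \tau\lambda^{-(\mu\alpha+2p)/(\alpha+p)}$ because $\sqrt{\mathcal{N}_\infty^1(\lambda)\mathcal{N}_\infty^3(\lambda)}\asymp\lambda^{-(\mu\alpha+2p)/(\alpha+p)}$. Then writing
\[
\hat\Sigma_{Id,\mathcal{A}_1}+\lambda=(\Sigma_{Id,\mathcal{A}_1}+\lambda)^{1/2}\bigl(I+E\bigr)(\Sigma_{Id,\mathcal{A}_1}+\lambda)^{1/2},\qquad\lVert E\rVert\le\tfrac12,
\]
with $E=(\Sigma_{Id,\mathcal{A}_1}+\lambda)^{-1/2}(\hat\Sigma_{Id,\mathcal{A}_1}-\Sigma_{Id,\mathcal{A}_1})(\Sigma_{Id,\mathcal{A}_1}+\lambda)^{-1/2}$, a Neumann series gives $\lVert(I+E)^{-1}\rVert\le 2$ and $\lVert I+E\rVert\le 2$. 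Taking (operator) square roots by the functional calculus applied to the symmetrized $I+\tfrac12(E+E^\ast)$ (or equivalently, working directly with $(I+E)^{\pm 1}$ and using $\lVert AB\rVert\le\lVert A\rVert\lVert B\rVert$), both quantities $\lVert(\Sigma_{Id,\mathcal{A}_1}+\lambda)^{\pm 1/2}(\hat\Sigma_{Id,\mathcal{A}_1}+\lambda)^{\mp 1/2}\rVert$ are then sandwiched between $1/2$ and $2$.

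The main obstacle I anticipate is handling the non-self-adjointness of $\hat\Sigma_{Id,\mathcal{A}_1}$ cleanly: one must be careful that the Hermitian dilation does not lose a factor, and that the ``operator square root'' step of the perturbation argument is justified even though $\hat\Sigma_{Id,\mathcal{A}_1}$ is not self-adjoint. Under the commuting assumption on $\mathcal{A}_1$ and $\Sigma$ the population object $\Sigma_{Id,\mathcal{A}_1}=\Sigma\mathcal{A}_1$ is self-adjoint and positive, so its square roots behave as usual; it is only the empirical perturbation $E$ that requires the bound to be stated in terms of the (non-symmetric) operator norm, which is exactly the form the lemma uses.
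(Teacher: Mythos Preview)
Your proposal is correct and follows essentially the same route as the paper: write the conjugated difference as an average of i.i.d.\ rank-one operators $(\Sigma_{Id,\mathcal{A}_1}+\lambda)^{-1/2}K_{x_i}\otimes(\Sigma_{Id,\mathcal{A}_1}+\lambda)^{-1/2}\mathcal{A}_1K_{x_i}$, bound them uniformly by $\sqrt{\mathcal{N}_\infty^1(\lambda)\mathcal{N}_\infty^3(\lambda)}$ via $\lVert u\otimes v\rVert=\lVert u\rVert\,\lVert v\rVert$ and Lemma~\ref{lemma:hx}, then apply a Bernstein-type inequality. The only substantive difference is the concentration tool: the paper invokes the Pinelis--Sakhanenko inequality for Hilbert-space-valued random variables (regarding the operators as Hilbert--Schmidt elements, which makes the non-self-adjointness a non-issue since $\lVert\cdot\rVert_{\mathrm{op}}\le\lVert\cdot\rVert_{\mathrm{HS}}$), whereas you go through an operator-norm Bernstein with the Hermitian-dilation trick; your path is slightly heavier but gives the same bound, and the Neumann-series step you spell out for the second part is left entirely implicit in the paper.
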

\begin{proof}
 We utilize the concentration result for Hilbert space valued random variable \citep{pinelis1985remarks} to prove the bound here. Now, we consider the operator $C_x:\mathcal{H}\rightarrow\mathcal{H}$ the operator defined by
 $$
 C_xf:=\mathcal{A}_1f(x)k(x,\cdot) =\left<f,\mathcal{A}_1K_x\right>K_x,
 $$
 and consider the random variable $\xi_x:=(\Sigma_{Id,\mathcal{A}_1}+\lambda )^{1/2}C_x(\Sigma_{Id,\mathcal{A}_1}+\lambda )^{-1/2}$. From definition, we know that
\begin{equation}
    \begin{aligned}
       \xi_xf&=(\Sigma_{Id,\mathcal{A}_1}+\lambda )^{1/2}C_x(\Sigma_{Id,\mathcal{A}_1}+\lambda )^{-1/2}f\\
       &=\left<f,(\Sigma_{Id,\mathcal{A}_1}+\lambda )^{-1/2}\mathcal{A}_1K_x\right>(\Sigma_{Id,\mathcal{A}_1}+\lambda )^{1/2}K_x\\
       &=((\Sigma_{Id,\mathcal{A}_1}+\lambda )^{1/2}K_x\otimes (\Sigma_{Id,\mathcal{A}_1}+\lambda )^{-1/2}\mathcal{A}_1K_x)f.
    \end{aligned}
\end{equation}
At the same time, we know that $||f\otimes g||=||f||_\mathcal{H}||g||_\mathcal{H}$ for all $f,g\in\mathcal{H}$, thus utilizing the concentration results for Hilbert space valued random variable, we have
$$
||(\Sigma_{Id,\mathcal{A}_1}+\lambda )^{-1/2}(\hat \Sigma_{Id,\mathcal{A}_1}-\Sigma_{Id,\mathcal{A}_1} )(\Sigma_{Id,\mathcal{A}_1}+\lambda )^{-1/2}||^2\lesssim \sqrt{\frac{\tau\sqrt{\mathcal{N}_\infty^1(\lambda)\mathcal{N}_\infty^3(\lambda)}}{n}}.
$$
From Lemma \ref{lemma:hx}, we know that $\mathcal{N}^1_\infty(\lambda)=\sup_{x\in\rho(x)}||(\Sigma_{Id,\mathcal{A}_1}+\lambda)^{-1/2}K_x||_H^2\le ||k_v^{\alpha}||_\infty^2\lambda^{-\frac{\mu\alpha+p}{\alpha+p}},$ and $\mathcal{N}^3_\infty(\lambda)=\sup_{x\in\rho(x)}||(\Sigma_{Id,\mathcal{A}_1}+\lambda)^{-1/2}\mathcal{A}_1K_x||_H^2\le ||k_v^{\alpha}||_\infty^2\lambda^{-\frac{\mu\alpha+3p}{\alpha+p}}.$ Thus once {$n\gtrsim \tau\lambda^{-\frac{\mu\alpha+2p}{\alpha+p}}$}, we'll have
$$
\frac{1}{2}\le||(\Sigma_{Id,\mathcal{A}_1}+\lambda )^{1/2}(\hat\Sigma_{Id,\mathcal{A}_1}+\lambda )^{-1/2}||\le 2, \frac{1}{2}\le||(\Sigma_{Id,\mathcal{A}_1}+\lambda )^{-1/2}(\hat\Sigma_{Id,\mathcal{A}_1}+\lambda )^{1/2}||\le 2.
$$
\end{proof}
\begin{theorem}[Bernstein's Inequality] \label{thm:bernstein} Let $(\Omega,\mathcal{B},P)$ be a probability space, $H$ be a separable Hilbert space, and $\xi:\Omega\rightarrow H$ with
$$
\mathbb{E}_P||\xi||_H^m\le\frac{1}{2}m!\sigma^2L^{m-2}
$$
for all $m\ge 2$. Then, for $\tau\ge 1$ and $n\ge 1$, the following concentration inequality is satisfied
$$
\mathcal{P}^n\left[\left||\frac{1}{n}\sum_{i=1}^n\xi(\omega_i)-\mathbb{E}_P\xi\right||_H^2\ge32\frac{\tau^2}{n}\left(\sigma^2+\frac{L^2}{n}\right)\right]\le 2e^{-\tau}
$$

\end{theorem}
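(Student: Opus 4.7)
The plan is to prove this via the classical Chernoff/exponential-moment method, where the only nontrivial input is an exponential-moment bound for Hilbert-valued sums. Let $Z_i := \xi(\omega_i) - \mathbb{E}_P\xi$; these are i.i.d.\ mean-zero with $\mathbb{E}\|Z_i\|_H^m \le 2^{m-1} m!\,\sigma^2 L^{m-2}$ by Jensen applied to the centering, so the Bernstein moment condition passes from $\xi$ to $Z_i$ up to an absolute constant. Write $S_n := \sum_{i=1}^n Z_i$; the theorem is a statement about $\|S_n/n\|_H^2$.

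The main ingredient I would invoke is the Pinelis--Sakhanenko exponential-moment bound for sums in a 2-uniformly smooth Banach space (hence in particular in any Hilbert space): for $0 < \lambda < 3/L$,
\begin{equation*}
\mathbb{E}\exp\!\bigl(\lambda\|S_n\|_H\bigr) \;\le\; 2\exp\!\Bigl(\tfrac{\lambda^2\, n\sigma^2}{2(1 - \lambda L/3)}\Bigr).
\end{equation*}
This is proved by induction on $n$, conditioning on $\mathcal{F}_{k-1}$ and exploiting the parallelogram identity $\|S_k\|_H^2 = \|S_{k-1}\|_H^2 + 2\langle S_{k-1}, Z_k\rangle + \|Z_k\|_H^2$: the mean-zero property of $Z_k$ kills the cross term after a convexification step, and the Bernstein moment bound on $\|Z_k\|_H$ yields the scalar MGF estimate $\mathbb{E}\exp(\lambda\|Z_k\|_H) \le \exp(\lambda^2\sigma^2/(2(1-\lambda L/3)))$ by Taylor expansion of $\exp$. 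This is the only place where the Hilbert structure is used in an essential way, and it is already treated as a black box in the cited reference \citep{pinelis1985remarks}.

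Given the MGF bound, the remainder is routine. For any $t > 0$ and admissible $\lambda$, Chernoff gives
\begin{equation*}
P\bigl(\|S_n/n\|_H \ge t\bigr) \;\le\; e^{-\lambda n t}\,\mathbb{E}\exp(\lambda\|S_n\|_H) \;\le\; 2\exp\!\Bigl(-\lambda n t + \tfrac{\lambda^2 n \sigma^2}{2(1-\lambda L/3)}\Bigr),
\end{equation*}
and optimizing over $\lambda$ produces the two-regime Bernstein tail $P(\|S_n/n\|_H\ge t)\le 2\exp(-nt^2/(2\sigma^2 + 2Lt/3))$. Equating the exponent to $-\tau$, solving the resulting quadratic in $t$, then squaring and applying $(a+b)^2 \le 2(a^2 + b^2)$ together with $\tau \ge 1$ to absorb the numerical constants into the factor $32$, produces the stated form $\|S_n/n\|_H^2 \le 32\tau^2(\sigma^2 + L^2/n)/n$ with failure probability at most $2e^{-\tau}$. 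The only non-routine step is the Pinelis--Sakhanenko MGF bound in the middle paragraph, whose proof genuinely uses 2-smoothness of $H$; once that is taken as given, everything else is textbook Chernoff optimization and elementary algebra.
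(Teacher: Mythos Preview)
The paper does not actually prove this theorem; it is stated as a known result (Bernstein's inequality for Hilbert-valued random variables, implicitly attributed to \citep{pinelis1985remarks}) and used as a black box in the variance analysis. Your proposal supplies precisely the standard proof of this result via the Pinelis--Sakhanenko exponential-moment bound followed by Chernoff optimization, which is the canonical argument and is correct as sketched. In short: there is nothing to compare against, and your proof is the expected one.
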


\begin{lemma} [Lemma 25 in \citep{fischer2020sobolev}]
\label{lemma:basic}
For $\lambda>0$ and $0\le\alpha\le 1$, the function $f_{\lambda,\alpha}:[0,\infty)\rightarrow\mathbb{R}$ be defined by $f_{\lambda,\alpha}(t):=\frac{t^{\alpha}}{\lambda+t}$. In the case $\alpha=0$ the function is decreasing and for $\alpha=1$ the function is increasing. Furthermore
$$
\lambda^{\alpha-1}/2\le\sup_{t\ge0}f_{\lambda,\alpha}(t)\le\lambda^{\alpha-1}
$$
for $0<\alpha<1 $ the function attain its supremum at $t^\ast=\frac{\lambda\alpha}{1-\alpha}$
\end{lemma}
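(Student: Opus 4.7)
The plan is to treat this as a one-variable calculus exercise on the function $f_{\lambda,\alpha}(t)=t^{\alpha}/(\lambda+t)$, since $\lambda>0$ makes $f$ smooth on $(0,\infty)$ with a clean derivative. First I would dispose of the boundary cases: when $\alpha=0$ we have $f_{\lambda,0}(t)=1/(\lambda+t)$, which is decreasing by inspection, and when $\alpha=1$ we have $f_{\lambda,1}(t)=t/(\lambda+t)=1-\lambda/(\lambda+t)$, which is increasing. Neither case has an interior supremum, so both of the displayed claims only need to be established for $0<\alpha<1$.

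For the interior regime, I would compute
\[
f_{\lambda,\alpha}'(t)=\frac{\alpha t^{\alpha-1}(\lambda+t)-t^{\alpha}}{(\lambda+t)^2}=\frac{t^{\alpha-1}\bigl[\alpha\lambda-(1-\alpha)t\bigr]}{(\lambda+t)^2}.
\]
On $(0,\infty)$ the sign of $f_{\lambda,\alpha}'$ is the sign of $\alpha\lambda-(1-\alpha)t$, which is positive for $t<t^{\ast}$ and negative for $t>t^{\ast}$, where $t^{\ast}=\alpha\lambda/(1-\alpha)$. Combined with $f_{\lambda,\alpha}(0)=0$ and $f_{\lambda,\alpha}(t)\to 0$ as $t\to\infty$, this shows the global maximum is attained exactly at $t^{\ast}$, giving the critical-point claim.

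To extract the two-sided bound on the supremum, I would plug $t^{\ast}$ back into $f_{\lambda,\alpha}$. Using $\lambda+t^{\ast}=\lambda/(1-\alpha)$ and $(t^{\ast})^{\alpha}=\alpha^{\alpha}(1-\alpha)^{-\alpha}\lambda^{\alpha}$, one finds
\[
\sup_{t\ge 0}f_{\lambda,\alpha}(t)=f_{\lambda,\alpha}(t^{\ast})=\alpha^{\alpha}(1-\alpha)^{1-\alpha}\lambda^{\alpha-1}.
\]
The proof then reduces to the elementary fact that the prefactor $\varphi(\alpha):=\alpha^{\alpha}(1-\alpha)^{1-\alpha}$ satisfies $1/2\le\varphi(\alpha)\le 1$ on $(0,1)$. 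The upper bound is immediate because each of $\alpha^{\alpha}$ and $(1-\alpha)^{1-\alpha}$ lies in $(0,1]$ there (using $x^x\le 1$ for $x\in(0,1]$). For the lower bound I would take $\log\varphi(\alpha)=\alpha\log\alpha+(1-\alpha)\log(1-\alpha)$, note that this is the negative of the binary entropy and hence is minimized at $\alpha=1/2$, where $\varphi(1/2)=1/2$. Combining the two estimates yields $\lambda^{\alpha-1}/2\le\sup_{t\ge 0}f_{\lambda,\alpha}(t)\le\lambda^{\alpha-1}$.

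There is no genuine obstacle here; the only mildly delicate point is the symmetric convexity argument giving $\varphi(\alpha)\ge 1/2$, but even that can be handled by a direct derivative computation showing $\varphi$ is decreasing on $(0,1/2]$ and increasing on $[1/2,1)$, with value $1/2$ at $\alpha=1/2$. Everything else is a single differentiation and back-substitution.
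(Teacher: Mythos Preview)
Your proposal is correct and follows essentially the same approach as the paper: differentiate to locate the unique critical point $t^\ast=\alpha\lambda/(1-\alpha)$, evaluate $f_{\lambda,\alpha}(t^\ast)=\alpha^\alpha(1-\alpha)^{1-\alpha}\lambda^{\alpha-1}$, and then bound the prefactor $\varphi(\alpha)=\alpha^\alpha(1-\alpha)^{1-\alpha}$ between $1/2$ and $1$. The paper handles the lower bound by the same derivative computation you mention as an alternative, $\varphi'(\alpha)=\varphi(\alpha)\log\frac{\alpha}{1-\alpha}$, while your entropy observation is just a rephrasing of the same fact.
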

\begin{proof}
For completeness, we provide the proof here. For function $f_{\lambda,\alpha}(t):=\frac{t^{\alpha}}{\lambda+t}$ with $0<\alpha<1$, we know the derivative of it is $f^\prime_{\lambda,\alpha}(t)=\frac{\alpha t^{\alpha-1}(\alpha+t)-t^\alpha}{(\lambda+t)^2}$. The derivative $f^\prime_{\lambda,\alpha}$ has a unique root at $t^\ast=\alpha\lambda/(1-\alpha)$. $f_{\lambda,\alpha}$ attains global maximum at $t^\ast$ and
$$
\sup_{t\ge 0}f_{\lambda,\alpha}(t)=f_{\lambda,\alpha}(t^\ast)=\lambda^{\alpha-1}\alpha^\alpha(1-\alpha)^{1-\alpha}\le \lambda^{\alpha-1}.
$$
At the same time, $(\alpha^\alpha(1-\alpha)^{1-\alpha})^\prime=\alpha^\alpha(1-\alpha)^{1-\alpha}\log\left(\frac{\alpha}{1-\alpha}\right)$ thus $\alpha^\alpha(1-\alpha)^{1-\alpha}$ achieves minimum $\frac{1}{2}$ when $\alpha=\frac{1}{2}$. Thus we know $\lambda^{\alpha-1}/2\le\sup_{t\ge0}f_{\lambda,\alpha}(t)$.
\end{proof}

\subsubsection{Bias}
\label{section:appedixbias}

In this section, we consider the bias introduced by the regularization factor, \emph{i.e.} the difference between $g_\lambda=(\Sigma_{Id,\mathcal{A}_1}+\lambda I)^{-1}\mathcal{A}_2 S^* f_\rho$ and the ground truth solution $\mathcal{A}_1^{-1}\mathcal{A}_2f_\rho$.  
\begin{lemma} \label{lemma:bias} If $u^\ast=\mathcal{A}_1^{-1}\mathcal{A}_2f_\rho\in [H]^\beta$ holds, then for all $0\le\gamma\le\beta$ and $\lambda>0$, the following bounds holds
$$
||g_\lambda-\mathcal{A}_1^{-1}\mathcal{A}_2f_\rho||_\gamma \lesssim \lambda^{\frac{(\frac{\beta-\gamma}{2})\alpha}{\alpha+p}}||u^\ast||_{[H]^\beta}.
$$
Here $g_\lambda=(\Sigma_{Id,\mathcal{A}_1}+\lambda I)^{-1}\mathcal{A}_2 S^* f_\rho$.
\end{lemma}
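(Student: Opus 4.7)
The plan is to reduce the bias to a single spectral expression and then bound it by a one-line supremum calculation. First I would show that, because $\mathcal{A}_1, \mathcal{A}_2$ are diagonal in the same basis as $\Sigma$, the population ridge target admits a clean rewrite: writing $f_\rho=\mathcal{A}u^\ast$ and using $S^\ast f_\rho = \Sigma\mathcal{A}u^\ast$ together with $\mathcal{A}_2\mathcal{A}=\mathcal{A}_1$, one gets
\[
\mathcal{A}_2 S^\ast f_\rho \;=\; \Sigma\,\mathcal{A}_1 u^\ast \;=\; \Sigma_{Id,\mathcal{A}_1}\,u^\ast .
\]
Hence
\[
g_\lambda - u^\ast \;=\; \bigl[(\Sigma_{Id,\mathcal{A}_1}+\lambda I)^{-1}\Sigma_{Id,\mathcal{A}_1}-I\bigr]u^\ast
\;=\; -\lambda\,(\Sigma_{Id,\mathcal{A}_1}+\lambda I)^{-1}u^\ast,
\]
so the bias is purely a spectral filter residual applied to $u^\ast$.

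Next I would compute the $\gamma$-power norm on the common eigenbasis $\{e_i\}$. Writing $u^\ast=\sum_i u_i^\ast e_i$ and using $\|f\|_\gamma^2=\sum_i f_i^2/\lambda_i^{\gamma}$ (from the power-space definition), together with the source condition $\|u^\ast\|_{\mathcal{H}^\beta}^2=\sum_i (u_i^\ast)^2/\lambda_i^\beta<\infty$, one gets
\[
\|g_\lambda-u^\ast\|_\gamma^2
\;=\;\lambda^2\sum_i \frac{(u_i^\ast)^2}{\lambda_i^\gamma(\lambda_i p_i+\lambda)^2}
\;\le\;\|u^\ast\|_{\mathcal{H}^\beta}^2\cdot\sup_i\frac{\lambda^2\,\lambda_i^{\beta-\gamma}}{(\lambda_i p_i+\lambda)^2}.
\]
This reduces the lemma to controlling a single scalar supremum.

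For that supremum I would substitute the capacity/operator decays $\lambda_i\asymp i^{-\alpha}$, $p_i\asymp i^{-p}$, giving $\lambda_i p_i\asymp i^{-(\alpha+p)}$ and $\lambda_i^{\beta-\gamma}\asymp (\lambda_i p_i)^{\alpha(\beta-\gamma)/(\alpha+p)}$. Setting $t=\lambda_i p_i$ and $s=\alpha(\beta-\gamma)/(\alpha+p)$ gives
\[
\sup_i\frac{\lambda^2\,\lambda_i^{\beta-\gamma}}{(\lambda_i p_i+\lambda)^2}\;\lesssim\;\lambda^2\sup_{t\ge 0}\frac{t^{s}}{(t+\lambda)^{2}}
\;=\;\lambda^2\Bigl(\sup_{t\ge 0}\frac{t^{s/2}}{t+\lambda}\Bigr)^2\;\lesssim\;\lambda^{s},
\]
where the last bound uses Lemma~\ref{lemma:basic} applied with exponent $s/2\in[0,1]$ (valid because $0\le\gamma\le\beta\le 1$ together with $\alpha+p>0$ from Assumption~\ref{assumption:kernel}(d)). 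Taking square roots yields $\|g_\lambda-u^\ast\|_\gamma\lesssim \lambda^{\frac{(\beta-\gamma)\alpha}{2(\alpha+p)}}\|u^\ast\|_{\mathcal{H}^\beta}$, which is exactly the stated rate.

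The main obstacle is really the bookkeeping in step one: checking that the commuting diagonalization of $\mathcal{A}_1,\mathcal{A}_2,\Sigma$ in the shared eigenbasis is used correctly when passing $\mathcal{A}_2$ through $S^\ast$, so that $\mathcal{A}_2 S^\ast\mathcal{A} = S^\ast \mathcal{A}_1 = \Sigma^{-1}\Sigma_{Id,\mathcal{A}_1}$ modulo domain issues. Once this identification is made, steps two and three are essentially a quantitative version of the standard bias calculation for spectral regularization, with the operator $\Sigma$ replaced by $\Sigma_{Id,\mathcal{A}_1}$ (which shifts the effective spectral exponent from $\alpha$ to $\alpha+p$ and thereby produces the factor $\alpha+p$ in the denominator of the exponent).
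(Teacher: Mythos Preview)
Your proposal is correct and follows essentially the same route as the paper: both reduce to the spectral residual $g_\lambda-u^\ast=-\lambda(\Sigma_{Id,\mathcal{A}_1}+\lambda I)^{-1}u^\ast$, expand in the common eigenbasis, pull out $\|u^\ast\|_{\mathcal{H}^\beta}^2$, and bound the remaining supremum $\sup_i \lambda^2\lambda_i^{\beta-\gamma}/(\lambda_i p_i+\lambda)^2$ via Lemma~\ref{lemma:basic}. Your operator-level identity $\mathcal{A}_2 S^\ast f_\rho=\Sigma_{Id,\mathcal{A}_1}u^\ast$ is just a cleaner way of writing the same spectral computation the paper carries out coordinate by coordinate.
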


\begin{proof} Since $u^\ast=\mathcal{A}_1^{-1}\mathcal{A}_2f_\rho\in [H]^\beta$, we can use the spectral representation $u^\ast=\sum_{i=1}^n a_ie_i$ with $||u^\ast||_{[H]^\beta}=\sum_{i=1}^\infty \lambda_i^{-\beta}a_i$. At the same time $\mathcal{A}_2f_\rho=\mathcal{A}_1u^\ast=\sum_{i=1}^n a_ip_ie_i$. We also observe that the matrix $(\Sigma_{Id,\mathcal{A}_1}+\lambda I)^{-1}$ have the spectral representation $(\Sigma_{Id,\mathcal{A}_1}+\lambda I)^{-1}=\sum_{i=1}^\infty({\lambda_i p_i+\lambda})^{-1} e_i\otimes e_i$ and leads to the spectral representation of the solution

$$
g_\lambda=(\Sigma_{Id,\mathcal{A}_1}+\lambda I)^{-1}\mathcal{A}_2 S^* f_\rho =\sum_{i=1}^\infty\frac{\lambda_iq_i}{\lambda_i p_i+\lambda}\frac{p_i}{q_i}a_ie_i= \sum_{i=1}^\infty\frac{\lambda_ip_i}{\lambda_i p_i+\lambda}a_ie_i
$$

Then we can bound the bias via the spectral representation 

\begin{equation}
    \begin{aligned}
    ||g_\lambda-\mathcal{A}_1^{-1}\mathcal{A}_2f_\rho||_\gamma^2&=|| (\Sigma_{Id,\mathcal{A}_1}+\lambda I)^{-1}\mathcal{A}_2S^\ast f_\rho-\mathcal{A}_1^{-1}\mathcal{A}_2f_\rho||_\gamma^2
    \\&=\left||\sum_{i=1}^\infty\frac{\lambda_ip_i}{\lambda_i p_i+\lambda}a_ie_i-a_ie_i\right||^2=\left||\sum_{i=1}^\infty\frac{\lambda}{\lambda_i p_i+\lambda}a_ie_i\right||_\gamma^2
    \\&=\sum_{i=1}^\infty \left(\frac{\lambda}{\lambda_i p_i+\lambda}a_i\right)^2\lambda_i^{-\gamma}
    \\&=\lambda^2\left(\sup_{i\ge 1}\frac{i^{-\alpha(\frac{\beta-\gamma}{2})}}{\lambda+i^{-\alpha-p}}\right)^2\sum_{i\ge 1}\lambda_i^{-\beta}a_i^2\le \lambda^{\frac{({\beta-\gamma})\alpha}{\alpha+p}}||u^\ast||_{[H]^\beta}^2
    \end{aligned}
\end{equation}

\end{proof}

In this section, we also bound a bias over the energy function $||\mathcal{A}_1 g_\lambda-\mathcal{A}_2 f_\rho||_2^2$, which will be used in bounding the variance term.

\begin{lemma} \label{lemma:biasenergy}
 If $u^\ast=\mathcal{A}_1^{-1}\mathcal{A}_2f_\rho\in [H]^\beta$ holds, then for all $0\le\gamma\le\beta$ and $\lambda>0$, the following bounds holds
$$
||\mathcal{A}_1g_\lambda-\mathcal{A}_2f_\rho||_2 \lesssim \lambda^{\frac{\beta\alpha-2p}{2(\alpha+p)}}||u^\ast||_{[H]^\beta}.
$$
Here $g_\lambda=(\Sigma_{Id,\mathcal{A}_1}+\lambda I)^{-1}\mathcal{A}_2 S^* f_\rho$.
\end{lemma}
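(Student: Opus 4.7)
My plan is to follow precisely the spectral-decomposition strategy already used for the $\gamma$-power bias estimate in Lemma~\ref{lemma:bias}. Because $\Sigma$, $\mathcal{A}_1$ and $\mathcal{A}_2$ are simultaneously diagonalisable in the orthonormal basis $\{e_i\}$ (Assumption~\ref{assumption:kernel}(d)), every operator appearing in $g_\lambda$ is diagonal, and the whole bound collapses to a one-dimensional supremum computation. Only the ambient norm changes from $\|\cdot\|_\gamma$ to $\|\cdot\|_{L_2}$, so the main calculation is a near-copy of the previous proof.

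First I will expand $u^\ast = \sum_i a_i \lambda_i^{\beta/2} e_i$ using the source condition, so that $\|u^\ast\|_{[H]^\beta}^2 = \sum_i a_i^2$. From $\mathcal{A}_1 = \mathcal{A}\mathcal{A}_2$ together with the commutativity of $\mathcal{A}_1$ and $\mathcal{A}_2$, one obtains $\mathcal{A}_2 f_\rho = \mathcal{A}_1 u^\ast = \sum_i p_i a_i \lambda_i^{\beta/2} e_i$ and, using $S^\ast e_j = \lambda_j e_j$, $\mathcal{A}_2 S^\ast f_\rho = \sum_i \lambda_i p_i a_i \lambda_i^{\beta/2} e_i$. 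Inverting the diagonal operator $\Sigma_{Id,\mathcal{A}_1}+\lambda I$, whose eigenvalues are $\lambda_i p_i + \lambda$, then gives exactly the same expression as in Lemma~\ref{lemma:bias}:
\begin{equation*}
g_\lambda \;=\; \sum_{i\ge 1}\frac{\lambda_i p_i}{\lambda_i p_i + \lambda}\, a_i\, \lambda_i^{\beta/2}\, e_i.
\end{equation*}

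Next, applying $\mathcal{A}_1$ termwise and subtracting $\mathcal{A}_2 f_\rho$ produces the clean residual
\begin{equation*}
\mathcal{A}_1 g_\lambda - \mathcal{A}_2 f_\rho \;=\; -\,\lambda \sum_{i\ge 1}\frac{p_i}{\lambda_i p_i + \lambda}\, a_i\, \lambda_i^{\beta/2}\, e_i,
\end{equation*}
and since $\{e_i\}$ is orthonormal in $L_2$, taking the squared $L_2$-norm and factoring out $\|u^\ast\|_{[H]^\beta}^2$ yields
\begin{equation*}
\|\mathcal{A}_1 g_\lambda - \mathcal{A}_2 f_\rho\|_{2}^{2} \;\le\; \lambda^{2}\Bigl(\sup_{i\ge 1}\frac{p_i\,\lambda_i^{\beta/2}}{\lambda_i p_i + \lambda}\Bigr)^{\!2} \|u^\ast\|_{[H]^\beta}^{2}.
\end{equation*}
Inserting the polynomial rates $\lambda_i\asymp i^{-\alpha}$, $p_i\asymp i^{-p}$ and changing variables $t = i^{-(\alpha+p)}$ (valid since $\alpha+p>0$) reduces this to $\sup_{t>0} t^{s}/(t+\lambda)$ for a single exponent $s$ expressible in $\alpha,\beta,p$. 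Lemma~\ref{lemma:basic} bounds the latter by $\lambda^{s-1}$, and multiplying by $\lambda$ followed by a square root produces the claimed rate $\lambda^{(\beta\alpha-2p)/(2(\alpha+p))}\|u^\ast\|_{[H]^\beta}$.

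The main obstacle is the bookkeeping needed to verify that the exponent $s$ obtained from the change of variables does indeed lie in the admissible range $[0,1]$ demanded by Lemma~\ref{lemma:basic}. This reduces to a simple arithmetic check using $\beta\in(0,1]$, $\alpha+p>0$, and the implicit well-posedness requirement that $\mathcal{A}_1 u^\ast$ belong to $L_2$ (otherwise the left-hand side is not even finite). Under the paper's standing hypotheses these conditions are satisfied, and the computation goes through without needing to split the sum into low- and high-frequency pieces.
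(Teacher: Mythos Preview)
Your proposal is correct and follows essentially the same route as the paper's own proof: both compute $g_\lambda$ in the joint eigenbasis, form the residual $\mathcal{A}_1 g_\lambda-\mathcal{A}_2 f_\rho=-\lambda\sum_i \frac{p_i}{\lambda_i p_i+\lambda}\,(\text{coeff}_i)\,e_i$, pull out $\|u^\ast\|_{[H]^\beta}$, and reduce the remaining supremum to Lemma~\ref{lemma:basic} via the substitution $t=i^{-(\alpha+p)}$. The only differences are cosmetic---you parametrise $u^\ast=\sum_i a_i\lambda_i^{\beta/2}e_i$ whereas the paper writes $u^\ast=\sum_i a_ie_i$ with $\|u^\ast\|_{[H]^\beta}^2=\sum_i\lambda_i^{-\beta}a_i^2$---and you are more explicit about checking the admissible range of the exponent $s$, which the paper leaves implicit.
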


\begin{proof} As discussed in the proof of Lemma \ref{lemma:bias}, we have the spectral representation of $g_\lambda$ as
$$
g_\lambda=(\Sigma_{Id,\mathcal{A}_1}+\lambda I)^{-1}\mathcal{A}_2 S^* f_\rho =\sum_{i=1}^\infty\frac{\lambda_iq_i}{\lambda_i p_i+\lambda}\frac{p_i}{q_i}a_ie_i= \sum_{i=1}^\infty\frac{\lambda_ip_i}{\lambda_i p_i+\lambda}a_ie_i
$$

Thus $\mathcal{A}_1g_\lambda-\mathcal{A}_2f_\rho=\sum_{i=1}^\infty\left(\frac{\lambda_ip_i^2}{\lambda_i p_i+\lambda}-p_i\right)a_ie_i=-\sum_{i=1}^\infty\left(\frac{p_i\lambda}{\lambda_i p_i+\lambda}\right)a_ie_i$ and we can have the bound of the bias in the energy norm as

\begin{equation*}
    \begin{aligned}
       ||\mathcal{A}_1g_\lambda-\mathcal{A}_2f_\rho||_2^2 &=\left||\sum_{i=1}^\infty\left(\frac{p_i\lambda}{\lambda_i p_i+\lambda}-p_i\right)a_ie_i\right||_2^2\\
       &=\sum_{i=1}^\infty\left(\frac{p_i\lambda}{\lambda_i p_i+\lambda}a_i\right)^2=\lambda^2\left(\sup_{i\ge 1}\frac{i^{-(\frac{\alpha\beta}{2})-p}}{\lambda+i^{-\alpha-p}}\right)^2\sum_{i\ge 1}\lambda_i^{-\beta}a_i^2\\
       &\lesssim\lambda^{\frac{\beta\alpha-2p}{\alpha+p}}||u^\ast||_{[H]^\beta}.
    \end{aligned}
\end{equation*}

\end{proof}

\subsubsection{Variance}
\label{section:appedixvaraince}

In this section, we bound the variance which defined as the difference between  between $g_\lambda = (\Sigma_{Id,\mathcal{A}_1}+\lambda I)^{-1}\mathcal{A}_2S^\ast f_\rho$ and $\hat g_\lambda =q_\lambda(\hat \Sigma_{Id,\mathcal{A}_1})\mathcal{A}_2\hat S^\ast y$ at the scale $
O\left(\frac{(\sigma^2+R^2\lambda^{2r})\mathcal{N}(\lambda_q)}{n}+\frac{\lambda^{\frac{(\beta-\gamma)\alpha-\mu\alpha-p}{\alpha+p}}}{n}+o(\frac{1}{n})\right)$. We first did the following decomposition
\begin{equation}
    \begin{aligned}
    \Sigma^{\frac{1-\gamma}{2}}(g_\lambda-\hat g_\lambda)&=\Sigma^{\frac{1-\gamma}{2}}q_\lambda(\hat \Sigma_{Id,\mathcal{A}_1})(\mathcal{A}_2\hat S^\ast y-(\hat \Sigma_{Id,\mathcal{A}_1}) g_\lambda)+\Sigma^{\frac{1-\gamma}{2}}\left[g_\lambda(\hat \Sigma_{Id,\mathcal{A}_1})\hat \Sigma_{Id,\mathcal{A}_1}-I\right]g_\lambda\\
    &=\Sigma^{\frac{1-\gamma}{2}}q_\lambda(\hat \Sigma_{Id,\mathcal{A}_1})(\Sigma_{Id,\mathcal{A}_1}^\lambda) ^{1/2}\left[\frac{1}{n}\sum_{i=1}^n(\xi(x_i,y_i))\right]+\Sigma^{\frac{1-\gamma}{2}} r(\hat \Sigma_{Id,\mathcal{A}_1}) g_\lambda\\
     &=\underbrace{\Sigma^{\frac{1-\gamma}{2}}q_\lambda(\hat \Sigma_{Id,\mathcal{A}_1})(\Sigma_{Id,\mathcal{A}_1}^\lambda) ^{1/2}\left[\frac{1}{n}\sum_{i=1}^n(\xi(x_i,y_i)-\mathbb{E}_P\xi(x,y))\right]}_{(\uppercase\expandafter{\romannumeral1})}\\
     &+\underbrace{\Sigma^{\frac{1-\gamma}{2}}q_\lambda(\hat \Sigma_{Id,\mathcal{A}_1})(\Sigma_{Id,\mathcal{A}_1}^\lambda) ^{1/2}\mathbb{E}_P\xi(x,y)}_{(\uppercase\expandafter{\romannumeral2})}+\underbrace{\Sigma^{\frac{1-\gamma}{2}} r(\hat \Sigma_{Id,\mathcal{A}_1}) g_\lambda}_{(\uppercase\expandafter{\romannumeral3})},
    \end{aligned}
\end{equation}
where we take the random variable $\xi(x,y)$ as $\xi(x,y)=(\Sigma_{Id,\mathcal{A}_1}+\lambda)^{-1/2}(y\mathcal{A}_2 K_x-\mathcal{A}_1 g_\lambda(x)  K_x)$ which satisfies $\mathbb{E}_Q\xi_2=(\Sigma_{Id,\mathcal{A}_1}+\lambda)^{-1/2}(\mathcal{A}_2 f_Q-\Sigma_{Id,\mathcal{A}_1}^Q g_\lambda)$ where $f_Q=\mathbb{E}_Qf(x)K_x$ and $\Sigma_{Id,\mathcal{A}_1}^Q=\mathbb{E}_Q K_x\otimes \mathcal{A}_1 K_x$ for arbitrary distribution $Q$ and $\mathbb{E}_{\mathbb{P}} \xi(x,y)=(\Sigma_{Id,\mathcal{A}_1}+\lambda)^{-1/2}(\mathcal{A}_2S^\ast f_\rho-\Sigma_{Id,\mathcal{A}_1} g_\lambda$). We bound different terms (\uppercase\expandafter{\romannumeral1}), (\uppercase\expandafter{\romannumeral2}) and (\uppercase\expandafter{\romannumeral3}) separately and combine them to get the final upper bound. We show that (\uppercase\expandafter{\romannumeral1}) is the mean variance term and is at the scale $\frac{\mathcal{N}(\lambda)}{n}=\frac{\text{Tr}(\Sigma_{Id,\mathcal{A}_{1}}+\lambda)^{-1}\Sigma_{\mathcal{A}_2,\mathcal{A}_2}}{n}$when the problem is regular. Term (\uppercase\expandafter{\romannumeral2}) and (\uppercase\expandafter{\romannumeral3}) is smaller than the bias. Our bound of term (\uppercase\expandafter{\romannumeral3}) bounds tighter than \citep{pillaud2018statistical} (the second term, Lemma 10) via the spectral representation.

\paragraph{Bounding term (\uppercase\expandafter{\romannumeral1}).} The term (\uppercase\expandafter{\romannumeral1}) is the concentration error of the random variable $\xi(x,y)$ and can be bounded via a Bernstein Inequality. We first bound term (\uppercase\expandafter{\romannumeral1}) via the following decomposition
\begin{equation*}\small
  \begin{aligned}
     &\left||\Sigma^{\frac{1-\gamma}{2}}q_\lambda(\hat \Sigma_{Id,\mathcal{A}_1})(\Sigma_{Id,\mathcal{A}_1}^\lambda) ^{1/2}\left[\frac{1}{n}\sum_{i=1}^n(\xi(x_i,y_i)-\mathbb{E}_P\xi(x,y))\right]\right||_H^2\le    ||\Sigma^{\frac{1-\gamma}{2}}(\Sigma_{Id,\mathcal{A}_1}^\lambda )^{-1/2}||^2 \left||\frac{1}{n}\sum_{i=1}^n(\xi(x_i,y_i)-\mathbb{E}_P\xi)\right||_H^2\\
     &\cdot||( \Sigma_{Id,\mathcal{A}_1}^\lambda )^{1/2}(\hat \Sigma_{Id,\mathcal{A}_1}^\lambda )^{-1/2}||^2||(\hat \Sigma_{Id,\mathcal{A}_1}^\lambda )^{1/2}q_\lambda(\hat \Sigma_{Id,\mathcal{A}_1})(\hat \Sigma_{Id,\mathcal{A}_1}^\lambda )^{1/2}||^2||(\hat \Sigma_{Id,\mathcal{A}_1}^\lambda )^{-1/2}( \Sigma_{Id,\mathcal{A}_1}^\lambda )^{1/2}||^2,
  \end{aligned}
\end{equation*}
where $\Sigma_{Id,\mathcal{A}_1}^\lambda=\Sigma_{Id,\mathcal{A}_1}+\lambda I$ and $\hat \Sigma_{Id,\mathcal{A}_1}^\lambda=\hat \Sigma_{Id,\mathcal{A}_1}+\lambda I$ . At the same time, we knows $||\Sigma^{\frac{1-\gamma}{2}}(\Sigma_{Id,\mathcal{A}_1}+\lambda )^{-1/2}||^2\le\lambda^{-\frac{\gamma\alpha+p}{\alpha+p}}$ (From lemma \ref{lemma:soblevnormsigma}) and $||( \Sigma_{Id,\mathcal{A}_1}^\lambda )^{1/2}(\hat \Sigma_{Id,\mathcal{A}_1}^\lambda )^{-1/2}||^2\le 2$ (From lemma \ref{lemma:aba}) with high probability. At the same time, we have
$$
||(\hat \Sigma_{Id,\mathcal{A}_1}^\lambda )^{1/2}q_\lambda(\hat \Sigma_{Id,\mathcal{A}_1})(\hat \Sigma_{Id,\mathcal{A}_1}^\lambda )^{1/2}||=\sup_{\sigma\in\sigma(\hat \Sigma_{Id,\mathcal{A}_1}^\lambda)} (\sigma+\lambda)q_\lambda(\sigma)\le 2c_q.
$$
Thus we only need to focus on bounding the concentration error $\frac{1}{n}\sum_{i=1}^n(\xi(x_i,y_i)-\mathbb{E}_P\xi)$. We recall the moment condition to control the noise of the observations. There are constants $\sigma,L>0$ such that
$$
\int_{\mathbb{R}}|y-f^*(x)|^mP(dy|x)\le \frac{1}{2}m!\sigma^2 L^{m-2}
$$

is satisfied for $\mu$-almost all $x\in X$ and all $m>2$. Note that the moment condition is satisfied for Gaussian noise with bounded variance or have a bounded observation noise. Then we can bound the second order momentum of the random variable $\xi(x,y)=(\Sigma_{Id,\mathcal{A}_1}+\lambda)^{-1/2}(y\mathcal{A}_2 K_x-\mathcal{A}_1g_\lambda(x) K_x)$ via decomposing the random into three parts $(\Sigma_{Id,\mathcal{A}_1}+\lambda)^{-1/2}(y\mathcal{A}_2 K_x-f^\ast(x) \mathcal{A}_2K_x)$, $(\Sigma_{Id,\mathcal{A}_1}+\lambda)^{-1/2}(f^\ast(x) \mathcal{A}_2K_x-\mathcal{A}_2f^\ast(x) K_x)$ and $(\Sigma_{Id,\mathcal{A}_1}+\lambda)^{-1/2}(\mathcal{A}_2f^\ast(x) K_x-\mathcal{A}_1g_\lambda(x)K_x)$. Base on the decomposition, we can bound the moments of random variable $\xi(x,y)$ as

\begin{align*}
       \mathbb{E}_P||\xi(x,y)||_H^m&=\int\left[||(\Sigma_{Id,\mathcal{A}_1}+\lambda)^{-1/2}\mathcal{A}_2K_x||_H^m\int_{\mathbb{R}}|y-f^*(x)|^mP(dy|x)\right]+\int\left[||(\Sigma_{Id,\mathcal{A}_1}+\lambda)^{-1/2}\mathcal{A}_2K_x||_H^m||f||_{\infty}^m\right]\\
       &+ \int\left[||(\Sigma_{Id,\mathcal{A}_1}+\lambda)^{-1/2}K_x||_H^m\int_{\mathbb{R}}|\mathcal{A}_2f^*(x)-\mathcal{A}_1g_\lambda|^mP(dy|x)\right]
dv(x)\\
&\le \frac{1}{2} m!\sigma^2(L+||f||_{\mathcal{H}})^{m}||h_x^1||_{\mathcal{H}}^{m-2}\text{trace}((\Sigma_{Id,\mathcal{A}_1}+\lambda)^{-1}\Sigma_{\mathcal{A}_2,\mathcal{A}_2})\\
&+||h_x^2||_{\mathcal{H}}^{m-2}||\mathcal{A}_2f^*(x)-\mathcal{A}_1g_\lambda||_{L_\infty}^{m-2}\int |\mathcal{A}_2f^*(x)-\mathcal{A}_1g_\lambda|^2d\mu(x)\\
&\lesssim m!\left(||h_x^1||\right)^{m}\left[\sigma^2\text{trace}((\Sigma_{Id,\mathcal{A}_1}+\lambda)^{-1}\Sigma_{\mathcal{A}_2,\mathcal{A}_2})\right]+m!\left(L_\lambda||h_x^2||\right)^{m-2}\left[||h_x^2||^2||\mathcal{A}_2f^*(x)-\mathcal{A}_1g_\lambda||_2^2\right]\\
\end{align*}
where $L_\lambda=||\mathcal{A}_2f^*(x)-\mathcal{A}_1g_\lambda||_{L_\infty}$, $h_x^1 = (\Sigma_{Id,\mathcal{A}_1}+\lambda)^{-1/2}\mathcal{A}_2K_x$ and $h_x^2 = (\Sigma_{Id,\mathcal{A}_1}+\lambda)^{-1/2}K_x$. The two vectors' norms are bounded in Lemma \ref{lemma:hx} as $\mathcal{N}^1_\infty(\lambda)=\sup_{x\in\rho(x)}||(\Sigma_{Id,\mathcal{A}_1}+\lambda)^{-1/2}K_x||_H^2\le ||k_v^{\alpha}||_\infty^2\lambda^{-\frac{\mu\alpha+p}{\alpha+p}},$ and $\mathcal{N}^2_\infty(\lambda)=\sup_{x\in\rho(x)}||(\Sigma_{Id,\mathcal{A}_1}+\lambda)^{-1/2}K_x||_H^2\le ||k_v^{\alpha}||_\infty^2\lambda^{-\frac{\mu\alpha+p+2q}{\alpha+p}}$. At the same time, we know that $||\mathcal{A}_1g_\lambda-\mathcal{A}_2f_\rho||_2 \lesssim \lambda^{\frac{\beta\alpha-2p}{2(\alpha+p)}}||u^\ast||_{[H]^\beta}$ from Lemma \ref{lemma:biasenergy} and $\text{Trace}((\Sigma_{\mathcal{A}_1}+\lambda)^{-1}\Sigma_{Id,Id})\le D \lambda^{\frac{p-1}{p+\alpha}}$ from Lemma \ref{lemma:DOF}. Then using Bernstein Inequality (Theorem \ref{thm:bernstein}), we knows that with probability $1-2e^{-\tau}$

 \begin{equation}
     \begin{aligned}
     &||\frac{1}{n}\sum_{i=1}^n(\xi(x_i,y_i)-\mathbb{E}_P\xi(x,y))||_H^2\\
     &\lesssim \frac{32\tau^2}{n}\left(\sigma^2\text{trace}((\Sigma_{Id,\mathcal{A}_1}+\lambda)^{-1}\Sigma_{\mathcal{A}_2,\mathcal{A}_2})+||h_x^2||^2||\mathcal{A}_2f^*(x)-\mathcal{A}_1g_\lambda||_2^2+\frac{L_\lambda||h_x^2||+||h_x^1||}{n}\right)\\
     &\lesssim \frac{\tau^2}{n}\left(\sigma^2 (\lambda)^{-\frac{1}{p+\alpha}-\frac{p-2q}{p+\alpha}}+\lambda^{-\frac{\mu\alpha-p}{\alpha+p}}\lambda^{\frac{\alpha\beta-2p}{\alpha+p}}+\frac{L_\lambda||h_x^2||+||h_x^1||}{n}\right)
     \end{aligned}
 \end{equation}
Thus we have the final bound $\left||\Sigma^{\frac{1-\gamma}{2}}q_\lambda(\hat \Sigma_{Id,\mathcal{A}_1})(\Sigma_{Id,\mathcal{A}_1}^\lambda) ^{1/2}\left[\frac{1}{n}\sum_{i=1}^n(\xi(x_i,y_i)-\mathbb{E}_P\xi(x,y))\right]\right||_H^2\le$\\ $\frac{\tau^2}{n}\lambda^{-\frac{\gamma\alpha+p}{\alpha+p}}\left(\sigma^2 (\lambda)^{-\frac{1}{p+\alpha}-\frac{p-2q}{p+\alpha}}+\lambda^{-\frac{\mu\alpha-p}{\alpha+p}}\lambda^{\frac{\alpha\beta-2p}{\alpha+p}}+\frac{L_\lambda||h_x^2||+||h_x^1||}{n}\right).$

\begin{remark}
In this remark, we'll bound the $L_\lambda=||\mathcal{A}_2f^*(x)-\mathcal{A}_1g_\lambda||_{L_\infty}$ here. For the embedding theorem of the $\ell_\infty$, $L_\lambda\le||\mathcal{A}_2f^*(x)-\mathcal{A}_1g_\lambda||_{\mu}\lesssim\lambda^{-\frac{(\mu-\beta)_+\alpha}{\alpha+p}}$. From Lemma \ref{lemma:hx}, we know that $||h_x^1||_H^2\lesssim\lambda^{-\frac{\mu\alpha+p+2q}{\alpha+p}}$ and $||h_x^2||_H^2\lesssim\lambda^{-\frac{\mu\alpha+p}{\alpha+p}}$.
\end{remark}

\paragraph{Bounding term (\uppercase\expandafter{\romannumeral3}).}  At last we bound the term $\Sigma^{\frac{1-\gamma}{2}} r(\hat \Sigma_{Id,\mathcal{A}_1}) g_\lambda$ via the following decomposition

\begin{equation*}
    \begin{aligned}
       ||\Sigma^{\frac{1-\gamma}{2}} r(\hat \Sigma_{Id,\mathcal{A}_1}) g_\lambda||_{\mathcal{H}}&=||\Sigma^{\frac{1-\gamma}{2}}\underbrace{( \Sigma_{Id,\mathcal{A}_1}^\lambda)^{-1/2}( \Sigma_{Id,\mathcal{A}_1}^\lambda)^{1/2} }_{Id}\underbrace{(\hat \Sigma_{Id,\mathcal{A}_1}^\lambda)^{-1/2}(\hat \Sigma_{Id,\mathcal{A}_1}^\lambda)^{1/2} }_{Id}r(\hat \Sigma_{Id,\mathcal{A}_1}^\lambda) (\Sigma_{Id,\mathcal{A}_1}^\lambda) ^{-1} \mathcal{A}_2 S^* f_\rho||_{\mathcal{H}}\\
       &\le ||\Sigma^{\frac{1-\gamma}{2}}( \Sigma_{Id,\mathcal{A}_1})^{-1/2}||||( \Sigma_{Id,\mathcal{A}_1})^{1/2} (\hat \Sigma_{Id,\mathcal{A}_1})^{-1/2}||||(\hat \Sigma_{Id,\mathcal{A}_1})^{1/2} r(\hat \Sigma_{Id,\mathcal{A}_1})|||| (\Sigma_{Id,\mathcal{A}_1}^\lambda) ^{-1} \mathcal{A}_2 S^* f_\rho||,
    \end{aligned}
\end{equation*}

where we use $\Sigma_{Id,\mathcal{A}_1}^\lambda$ to denote $\Sigma_{Id,\mathcal{A}_1}+\lambda I$. From Lemma \ref{lemma:soblevnormsigma} we now that 
 $||\Sigma^{\frac{1-\gamma}{2}}( \Sigma_{Id,\mathcal{A}_1}^\lambda)^{-1/2}||^2\le\lambda^{-\frac{\gamma\alpha+p}{\alpha+p}}$. Then we bound the term $||\Sigma^{\frac{1-\gamma}{2}} r(\hat \Sigma_{Id,\mathcal{A}_1}) g_\lambda||_{\mathcal{H}}$ using $r_\lambda(x)x^u\lesssim \lambda^u$ and get
 
 \begin{equation}
     \begin{aligned}
     ||(\hat \Sigma_{Id,\mathcal{A}_1})^{1/2} r(\hat \Sigma_{Id,\mathcal{A}_1})||=\sup_{\sigma\in\sigma(\hat \Sigma_{Id,\mathcal{A}_1}^\lambda)}(\sigma+\lambda)^{1/2}r_\lambda(\sigma)\le \lambda^{1/2}.
     \end{aligned}
 \end{equation}

At the same time, we can bound $|| (\Sigma_{Id,\mathcal{A}_1}^\lambda) ^{-1} \mathcal{A}_2 S^* f_\rho||$ using the spectral representation
\begin{equation}
    \begin{aligned}
    || (\Sigma_{Id,\mathcal{A}_1}^\lambda) ^{-1} \mathcal{A}_2 S^* f_\rho||^2&=\sum_{i=1}^\infty \frac{\lambda_ip_i^2a_i^2}{(\lambda+\lambda_ip_i)^2}\lesssim \left(\sup_{i\ge 1}\frac{\lambda_ip_i^2\lambda_i^{\beta}}{(\lambda+\lambda_ip_i)^2}\right)\sum_{i\ge 1}\lambda_i^{-\beta}a_i^2\\&\le \left(\lambda^{\frac{(\frac{1-\beta}{2})\alpha+p}{\alpha+p}-1}\right)^2 ||u^\ast||_{[H]^\beta}^2\le \lambda^{\frac{\beta}{\alpha+p}-1}||u^\ast||_{[H]^\beta}^2
    \end{aligned}
\end{equation}
Thus we know that 

$$
||(\hat \Sigma_{Id,\mathcal{A}_1})^{1/2} r(\hat \Sigma_{Id,\mathcal{A}_1})||\lesssim \lambda^{-\frac{\gamma\alpha+p}{2(\alpha+p)}}\lambda^{1/2}\lambda^{\frac{\beta}{2(\alpha+p)}-\frac{1}{2}}\le \lambda^{-\frac{(\beta-\gamma)\alpha}{2(\alpha+p)}},
$$
where the last inequality is because $p<0$ in our assumption.

\paragraph{Bounding term (\uppercase\expandafter{\romannumeral2}).}  In this paragraph, we demonstrate the proof to bound the term $$\Sigma^{\frac{1-\gamma}{2}}q_\lambda(\hat \Sigma_{Id,\mathcal{A}_1})(\Sigma_{Id,\mathcal{A}_1}^\lambda) ^{1/2}\mathbb{E}_P\xi(x,y)=\Sigma^{\frac{1-\gamma}{2}}q_\lambda(\Sigma_{Id,\mathcal{A}_1}^\lambda) ^{1/2}(\hat \Sigma_{Id,\mathcal{A}_1})(\Sigma_{Id,\mathcal{A}_1}+\lambda)^{-1/2}(\mathcal{A}_2S^\ast f_\rho-\Sigma_{Id,\mathcal{A}_1} g_\lambda).$$ Note that $\Sigma_{Id,\mathcal{A}_1}(\Sigma_{Id,\mathcal{A}_1}^{\lambda})^{-1}=I-\lambda(\Sigma_{Id,\mathcal{A}_1}^{\lambda})^{-1}$, thus we knows that $\Sigma^{\frac{1-\gamma}{2}}q_\lambda(\hat \Sigma_{Id,\mathcal{A}_1})(\Sigma_{Id,\mathcal{A}_1}^\lambda) ^{1/2}(\Sigma_{Id,\mathcal{A}_1}+\lambda)^{-1/2}(\mathcal{A}_2S^\ast f_\rho-\Sigma_{Id,\mathcal{A}_1} g_\lambda)=\lambda\Sigma^{\frac{1-\gamma}{2}}q_\lambda(\hat \Sigma_{Id,\mathcal{A}_1})(\Sigma_{Id,\mathcal{A}_1}^\lambda)^{-1}\mathcal{A}_2S^\ast f_\rho$. At the same time, according to our assumption on the spectral filter $q_\lambda$, we know that
$$
||(\Sigma_{Id,\mathcal{A}_1}^\lambda) ^{1/2} q(\hat \Sigma_{Id,\mathcal{A}_1}^\lambda)(\Sigma_{Id,\mathcal{A}_1}^\lambda) ^{1/2}||=\sup_{\sigma\in\sigma(\hat \Sigma_{Id,\mathcal{A}_1}^\lambda)} (\sigma+\lambda)q_\lambda(\sigma)\le 2c_q.
$$
Thus we can bound $\Sigma^{\frac{1-\gamma}{2}}q_\lambda(\hat \Sigma_{Id,\mathcal{A}_1})\mathbb{E}_P\xi(x,y)$ via the following decomposition
\begin{equation*}
\begin{aligned}
&||\Sigma^{\frac{1-\gamma}{2}}q_\lambda(\hat \Sigma_{Id,\mathcal{A}_1})\mathbb{E}_P\xi(x,y)||=\lambda||\Sigma^{\frac{1-\gamma}{2}}q_\lambda(\hat \Sigma_{Id,\mathcal{A}_1})(\Sigma_{Id,\mathcal{A}_1}^\lambda)^{-1}\mathcal{A}_2S^\ast f_\rho||\\
&=\lambda||\Sigma^{\frac{1-\gamma}{2}}\underbrace{( \Sigma_{Id,\mathcal{A}_1}^\lambda)^{-1/2}( \Sigma_{Id,\mathcal{A}_1}^\lambda)^{1/2} }_{Id}\underbrace{(\hat \Sigma_{Id,\mathcal{A}_1}^\lambda)^{-1/2}(\hat \Sigma_{Id,\mathcal{A}_1}^\lambda)^{1/2} }_{Id}q(\hat \Sigma_{Id,\mathcal{A}_1}^\lambda) (\Sigma_{Id,\mathcal{A}_1}^\lambda) ^{1/2} (\Sigma_{Id,\mathcal{A}_1}^\lambda) ^{-3/2}\mathcal{A}_2S^\ast f_\rho||_{\mathcal{H}}\\
%&\le \lambda ||\Sigma^{\frac{1-\gamma}{2}}( \Sigma_{Id,\mathcal{A}_1}^\lambda)^{-1/2}||||( \Sigma_{Id,\mathcal{A}_1}^\lambda)^{1/2}(\hat \Sigma_{Id,\mathcal{A}_1}^\lambda)^{-1/2}||||(\hat \Sigma_{Id,\mathcal{A}_1}^\lambda)^{1/2} q(\hat \Sigma_{Id,\mathcal{A}_1}^\lambda) (\Sigma_{Id,\mathcal{A}_1}^\lambda) ^{1/2}||||(\Sigma_{Id,\mathcal{A}_1}^\lambda) ^{-1}||||(\Sigma_{Id,\mathcal{A}_1}^\lambda) ^{-1/2}\mathcal{A}_2S^\ast f_\rho||\\
&\lesssim \lambda ||\Sigma^{\frac{1-\gamma}{2}}( \Sigma_{Id,\mathcal{A}_1}^\lambda)^{-1/2}|| ||(\Sigma_{Id,\mathcal{A}_1}^\lambda) ^{-1}||||(\Sigma_{Id,\mathcal{A}_1}^\lambda) ^{1/2}||||(\Sigma_{Id,\mathcal{A}_1}^\lambda) ^{-1}\mathcal{A}_2S^\ast f_\rho||\\
&\lesssim \lambda \lambda^{-\frac{\gamma\alpha+p}{2(\alpha+p)}} \lambda^{-1} \lambda^{1/2} \lambda^{\frac{\beta}{2(\alpha+p)}-\frac{1}{2}} \le \lambda^{-\frac{(\beta-\gamma)\alpha}{2(\alpha+p)}}
\end{aligned}
\end{equation*}

The last line is because of  $||\Sigma^{\frac{1-\gamma}{2}}( \Sigma_{Id,\mathcal{A}_1}^\lambda)^{-1/2}||^2\le\lambda^{-\frac{\gamma\alpha+p}{\alpha+p}}$ (Lemma \ref{lemma:soblevnormsigma}), $||( \Sigma_{Id,\mathcal{A}_1}^\lambda)^{1/2}(\hat \Sigma_{Id,\mathcal{A}_1}^\lambda)^{-1/2}||\le 2$ with high probability (Lemma \ref{lemma:aba}), $||(\Sigma_{Id,\mathcal{A}_1}^\lambda) ^{-1}||\le\lambda^{-1}$, $||(\Sigma_{Id,\mathcal{A}_1}^\lambda) ^{-1/2}\mathcal{A}_2S^\ast f_\rho||\le \lambda^{\frac{\beta}{2(\alpha+p)}-\frac{1}{2}}$ (proved while bounding term (\uppercase\expandafter{\romannumeral3})) and $p<0$.

\subsection{Final Bound} At this time we can have our final bound in Theorem \ref{theorem:earlystopping} via combining the bound for bias (Appendix \ref{section:appedixbias}) and  (Appendix \ref{section:appedixvaraince})

\begin{equation}
    \begin{aligned}
    ||\hat q_\lambda-u^*||_\gamma^2&\lesssim ||\hat q_\lambda-g_\lambda||_\gamma^2+||\hat q_\lambda-u^\ast||_\gamma^2\\
    &\lesssim \lambda^{\frac{(\beta-\gamma)\alpha}{\alpha+p}}+\frac{\tau^2}{n}\lambda^{-\frac{\gamma\alpha+p}{\alpha+p}}\left(\sigma^2 (\lambda)^{-\frac{1}{p+\alpha}-\frac{p-2q}{p+\alpha}}+\lambda^{-\frac{\mu\alpha-p}{\alpha+p}}\lambda^{\frac{\alpha\beta-2p}{\alpha+p}}+\frac{L_\lambda||h_x^2||+||h_x^1||}{n}\right)\\
    &\lesssim \lambda^{\frac{(\beta-\gamma)\alpha}{\alpha+p}} + \frac{\lambda^{-\frac{\gamma\alpha+2(p-q)+1}{p+\alpha}}}{n} +\frac{\lambda^{\frac{(\beta-\gamma)\alpha-\mu\alpha-p}{\alpha+p}}}{n}+\frac{\lambda^{-\frac{\mu\alpha+p+2q}{\alpha+p}}\lambda^{-\frac{\mu\alpha+p+2q}{\alpha+p}}}{n^2}
    \end{aligned}
\end{equation}

\paragraph{Case 1. $\beta\le \frac{\mu\alpha+2q-p+1}{\alpha}$} In this situation, $\frac{\lambda^{-\frac{\gamma\alpha+2(p-q)+1}{p+\alpha}}}{n} $is larger than $\frac{\lambda^{\frac{(\beta-\gamma)\alpha-\mu\alpha-p}{\alpha+p}}}{n}$. Thus  $\lambda^{\frac{(\beta-\gamma)\alpha}{\alpha+p}} + \frac{\lambda^{-\frac{\gamma\alpha+2(p-q)+1}{p+\alpha}}}{n}$ is the dominating term of the loss upper bound. Thus we can take $\lambda = n^{-\frac{\alpha+p}{\beta\alpha+2(p-q)+1}}$ and leads to $n^{-\frac{(\beta-\gamma)\alpha}{\beta+2(p-q)+1}}$ upper bound. At the same time, the third term is dominated by the second term.

\paragraph{Case 2. $\beta> \frac{\mu\alpha+2q-p+1}{\alpha}$}  In this situation, $\frac{\lambda^{\frac{(\beta-\gamma)\alpha-\mu\alpha-p}{\alpha+p}}}{n}$ is larger than $\frac{\lambda^{-\frac{\gamma\alpha+2(p-q)+1}{p+\alpha}}}{n} $. Thus  $\lambda^{\frac{(\beta-\gamma)\alpha}{\alpha+p}} + \frac{\lambda^{\frac{(\beta-\gamma)\alpha-\mu\alpha-p}{\alpha+p}}}{n}$ is the dominating term of the loss upper bound. Thus we can take $\lambda = n^{-\frac{\alpha+p}{\mu\alpha+p}}$ and leads to $n^{-\frac{(\beta-\gamma)\alpha}{\mu\alpha+p}}$ upper bound. At the same time, the third term is also dominated by the second term.

\section{Proof of the Lower Bound}
\label{appendix:low}
\subsection{Preliminaries on Tools for Lower Bounds}
\label{appendix:fano}
In this section, we repeat the standard tools we use to establish the lower bound. The main tool we use is the Fano's inequality and the Varshamov-Gilber Lemma.
\begin{comment}
\begin{lemma}[Lower Bound From Multiple Hypothesis, \citep{tsybakov2008introduction} Theorem 2.9] For $K\in\mathbb{N}$, $\Theta=\{T_0,\cdots,T_k\}$  is a collection of hypothesis. If the following holds for every $i,j,k\in[K]$
\begin{itemize}
    \item Non-negative: $d(T_j,T_k)\ge 0, d(T_j,T_j)=0$
    \item Symmetric: $d(T_j,T_k)=d(T_k,T_j)$
    \item Triangular Inequality: $d(T_j,T_l)\le d(T_j,T_k)+d(T_k,T_l)$
\end{itemize}
for a pseudometric $d$. We further assume the hypothesis set $\Theta$ satisfies
\begin{itemize}
    \item $d(T_j,T_k)\ge 2s$
    \item 
\end{itemize}

\begin{lemma}[Global Fano's Inequality]

$$
R_n^*\ge \frac{\Delta}{2}\left(1-\frac{\log\mathcal{N}_{KL}(\Theta_0,\epsilon)+n\epsilon^2+\log 2}{\log \mathcal{M}(\theta_0,d,\Delta)}\right)
$$

\end{lemma}
\end{lemma}

\end{comment}

\begin{lemma}[Fano's methods] Assume that $V$ is a uniform random variable over set $\mathcal{V}$, then for any Markov chain $V\rightarrow X\rightarrow \hat V$, we always have
$$
\mathcal{P}(\hat V\not = V)\ge 1-\frac{I(V;X)+\log 2}{\log(|\mathcal{V}|)}
$$

\end{lemma}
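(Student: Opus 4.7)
The plan is to derive the bound via the classical entropy-decomposition argument that yields Fano's inequality. First I would introduce the binary error indicator $E := \mathbf{1}\{\hat V \neq V\}$ and expand the joint conditional entropy $H(V, E \mid \hat V)$ via the chain rule in two different ways. Since $E$ is a deterministic function of $(V, \hat V)$ we have $H(E \mid V, \hat V) = 0$, so one expansion gives $H(V, E \mid \hat V) = H(V \mid \hat V)$, and the other gives $H(V, E \mid \hat V) = H(E \mid \hat V) + H(V \mid E, \hat V)$. Equating these produces the identity $H(V \mid \hat V) = H(E \mid \hat V) + H(V \mid E, \hat V)$, which is the backbone of the argument.

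Next I would bound the two terms on the right side separately. Since $E$ is binary, $H(E \mid \hat V) \leq H(E) \leq \log 2$. For the second term I would condition on the value of $E$: when $E = 0$ we have $V = \hat V$ almost surely, contributing zero entropy, while when $E = 1$ the variable $V$ is supported on at most $|\mathcal{V}| - 1$ points, so $H(V \mid E = 1, \hat V) \leq \log |\mathcal{V}|$. Averaging over $E$ and combining with the previous display yields $H(V \mid \hat V) \leq \log 2 + \mathcal{P}(\hat V \neq V) \log |\mathcal{V}|$.

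Finally I would invoke the Markov structure $V \to X \to \hat V$ together with the uniform-prior assumption. Uniformity of $V$ gives $H(V) = \log |\mathcal{V}|$, and the data-processing inequality produces $I(V; \hat V) \leq I(V; X)$, hence $H(V \mid \hat V) = H(V) - I(V; \hat V) \geq \log |\mathcal{V}| - I(V; X)$. Substituting this lower bound into the upper bound on $H(V \mid \hat V)$ from the previous paragraph and solving for $\mathcal{P}(\hat V \neq V)$ yields exactly the claimed inequality.

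No substantial obstacle is expected: this is the textbook statement of Fano's inequality in the form suitable for minimax lower bounds, and the proof is a standard assembly of the chain rule for entropy, the data-processing inequality, and the trivial bound $H(E) \leq \log 2$ for a binary variable. The only minor point is that one actually obtains the slightly stronger support bound $|\mathcal{V}|-1$ when $E = 1$, but since $\log(|\mathcal{V}|-1) \leq \log|\mathcal{V}|$ the weaker bound recorded in the statement suffices and keeps the final expression clean.
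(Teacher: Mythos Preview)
Your proof is correct and is the standard textbook derivation of Fano's inequality. The paper itself does not prove this lemma: it is merely stated in the preliminaries section as a known tool for establishing lower bounds, so there is no paper proof to compare against. Your argument via the error indicator, chain rule for entropy, and the data-processing inequality is exactly the classical one and would serve perfectly well as a self-contained justification.
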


\begin{lemma}[Varshamov-Gillbert Lemma,\citep{tsybakov2008introduction} Theorem 2.9] Let $D\ge 8$. There exists a subset $\mathcal{V}=\{\tau^{(0)},\cdots,\tau^{(2^{D/8})}\}$ of $D-$dimensional hypercube $\mathcal{H}^D=\{0,1\}^D$ such that $\tau^{(0)}=(0,0,\cdots,0)$ and the $\ell_1$ distance between every two elements is larger than $\frac{D}{8}$
$$
\sum_{l=1}^D ||\tau^{(j)}-\tau^{(k)}||_{\ell_1}\ge \frac{D}{8}\text{, for all }0\le j,k \le 2^{D/8}
$$

\end{lemma}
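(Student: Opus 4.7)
The plan is to establish the existence of $\mathcal{V}$ by the probabilistic method, the standard argument underlying Gilbert's packing bound. I fix $\tau^{(0)}=(0,\ldots,0)$ and draw $M := 2^{D/8}$ further vectors $\tau^{(1)},\ldots,\tau^{(M)}$ independently and uniformly at random from $\{0,1\}^D$. The goal is to show that with strictly positive probability, every pair in this (random) collection has $\ell_1$ distance at least $D/8$; a deterministic configuration with the claimed property then follows.

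For the first key step, I fix a pair $j\neq k$ and write $Z_l^{jk}=\mathbf{1}\{\tau^{(j)}_l\neq\tau^{(k)}_l\}$ for $l=1,\ldots,D$. When both $\tau^{(j)}$ and $\tau^{(k)}$ are independent uniform hypercube vectors, the coordinates $Z_1^{jk},\ldots,Z_D^{jk}$ are i.i.d.\ Bernoulli$(1/2)$; the same holds when one of them is the fixed $\tau^{(0)}=0$, since then $\|\tau^{(j)}-\tau^{(0)}\|_1$ is just the Hamming weight of a uniform vector. Hence $\|\tau^{(j)}-\tau^{(k)}\|_1$ is Binomial$(D,1/2)$. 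A one-sided Hoeffding bound then gives
\[
\mathbb{P}\bigl(\|\tau^{(j)}-\tau^{(k)}\|_1 \le D/8\bigr) \le \exp\!\bigl(-2D(1/2-1/8)^2\bigr) = \exp(-9D/32).
\]

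For the second step, I take a union bound over the $\binom{M+1}{2}\le M^2 = 2^{D/4}$ unordered pairs. This yields
\[
\mathbb{P}\!\left(\exists\, j\neq k:\; \|\tau^{(j)}-\tau^{(k)}\|_1 \le D/8\right) \le 2^{D/4}\exp(-9D/32) = \exp\!\bigl(-D\bigl(9/32 - (\ln 2)/4\bigr)\bigr).
\]
Since $(\ln 2)/4 < 0.174 < 9/32 \approx 0.281$, the exponent is strictly negative; for $D\ge 8$ the right-hand side is bounded by $e^{-0.86}<1$. Therefore a realization exists in which all pairwise distances are at least $D/8$, and this deterministic choice is the desired $\mathcal{V}$.

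The argument is a single calculation, so there is no deep conceptual obstacle; the only subtlety is bookkeeping. One must verify that the binomial lower-tail estimate beats the pair-counting factor $2^{D/4}$, which reduces to the numerical inequality $(\ln 2)/4 < 9/32$. This is precisely where the constant $1/8$ in the statement is calibrated: shrinking the target distance to $D/8$ leaves enough slack for the union bound to close. An equivalent but more constructive route is a greedy packing argument, iteratively selecting vectors whose Hamming balls of radius $D/8$ fail to cover $\{0,1\}^D$ and invoking the Chernoff bound $\sum_{j\le D/8}\binom{D}{j}\le 2^{D}\exp(-9D/32)$ to lower bound how many vectors can be so chosen; this produces the same conclusion with room to spare.
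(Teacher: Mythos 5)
Your proposal is correct, and it is worth noting that the paper itself offers no proof of this lemma: it is stated verbatim as a citation of Theorem 2.9 in Tsybakov's book and used as a black box in the lower-bound argument. Your probabilistic-method derivation therefore supplies a complete, self-contained argument where the paper has none. The two key computations check out: Hoeffding gives $\mathbb{P}\bigl(\mathrm{Bin}(D,1/2)\le D/8\bigr)\le\exp\bigl(-2D(3/8)^2\bigr)=\exp(-9D/32)$, and the union bound over at most $\binom{M+1}{2}\le M^2=2^{D/4}$ pairs (valid since $M\ge 1$) yields a failure probability $\exp\bigl(-D(9/32-(\ln 2)/4)\bigr)<1$ because $9/32>(\ln 2)/4$; the special role of $\tau^{(0)}=0$ is handled correctly since the distance to a uniform vector is again $\mathrm{Bin}(D,1/2)$. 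Two cosmetic remarks: the displayed condition in the statement should read $j\neq k$ (as written, $j=k$ gives distance $0$), which you implicitly and reasonably assume; and your argument in fact delivers the slightly stronger strict inequality $\|\tau^{(j)}-\tau^{(k)}\|_1>D/8$ on the good event, which of course implies the claim. Compared with Tsybakov's own proof, which extracts the separated subset by a counting argument over Hamming balls, your union-bound version is marginally less sharp in constants but entirely adequate for the separation level $D/8$ demanded here.
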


\subsection{Proof of the Lower Bound}
\begin{theorem}
Let $(X,B)$ be a measurable space,  $H$ be a separable RKHS on $X$ respect to a bounded and measurable kernel $k$ and operator $\mathcal{A}=(\mathcal{A}_2^{-1}\mathcal{A}_1)$ satisfies Assumption \ref{assumption:kernel}. We have $n$ random observations $\{(x_i,y_i)\in\mathcal{X}\times\mathcal{Y}\}_{i=1}^n$ of $f^\ast = \mathcal{A}u, u\in\mathcal{H}^\gamma\cap L_\infty$, \emph{i.e.} $y_i=f^*(x_i)+\eta_i$ where $\eta_i$ is a random noise satisfies the momentum assumption $\mathbb{E}|\eta|^m\le\frac{1}{2}m!\sigma^2L^{m-2}$ for some constant $\sigma,L>0$. Then for all estimators $H:(\mathcal{X}\times\mathcal{Y})^{\otimes n}\rightarrow \mathcal{H}^\gamma$ satisfies
$$
\inf_{H}\sup_{u^\ast\in\mathcal{H}^\beta\cap L_\infty}\mathbb{E}||H(\{(x_i,y_i)\}_{i=1}^n)-u^\ast||_\gamma^2\gtrsim n^{-\frac{(\max\{\beta,\mu\}-\gamma)\alpha}{\max\{\beta,\mu\}\alpha+2(q-p)+1}}
$$
\end{theorem}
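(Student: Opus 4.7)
The plan is to run a Fano-type argument with a single high-frequency dyadic block of eigenfunctions, producing a packing through the Varshamov--Gilbert lemma and then tuning the scale so that the prior-class constraint and the Kullback--Leibler budget are simultaneously saturated. Fix an integer $N$ to be chosen, let $I_N=\{N,N+1,\ldots,2N-1\}$, and apply Varshamov--Gilbert to obtain $\mathcal V=\{\tau^{(0)},\ldots,\tau^{(M)}\}\subset\{0,1\}^N$ with $M\ge 2^{N/8}-1$ and pairwise Hamming distance at least $N/8$. With a scale $\epsilon>0$ to be chosen later, set
\[
u^{(k)}\;:=\;\epsilon\sum_{i\in I_N}\tau^{(k)}_i\,e_i.
\]
Because indices stay in a single dyadic block, Assumption \ref{assumption:kernel}(b) gives $\lambda_i\asymp N^{-\alpha}$ and Assumption \ref{assumption:kernel}(d) gives $p_i/q_i\asymp N^{q-p}$ uniformly for $i\in I_N$.

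Three quantities need to be estimated: the prior-class budget, the separation in $\mathcal H^\gamma$, and the KL divergence. A direct spectral computation yields
\[
\|u^{(k)}\|_\beta^2\;\asymp\;\epsilon^2 N^{1+\alpha\beta},\qquad \|u^{(k)}\|_{L^\infty}^2\;\lesssim\;\|u^{(k)}\|_\mu^2\;\asymp\;\epsilon^2 N^{1+\alpha\mu},
\]
using the $L^\infty$ embedding from Assumption \ref{assumption:kernel}(e); both constraints at unit radius simultaneously force $\epsilon^2\lesssim N^{-1-\alpha\max\{\beta,\mu\}}$. The Hamming lower bound turns into the separation $\|u^{(j)}-u^{(k)}\|_\gamma^2\gtrsim \epsilon^2 N^{1+\alpha\gamma}$. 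Taking the noise to be $\eta_i\sim\mathcal N(0,\sigma^2)$ (admissible under the moment assumption), the product KL tensorises to
\[
\mathrm{KL}\!\left(P_{u^{(j)}}^{\otimes n}\,\middle\|\,P_{u^{(k)}}^{\otimes n}\right)\;=\;\tfrac{n}{2\sigma^2}\|\mathcal A(u^{(j)}-u^{(k)})\|_{L^2(\rho_X)}^2\;\asymp\;n\epsilon^2 N^{1+2(q-p)},
\]
since $\mathcal A e_i=(p_i/q_i)e_i$. Fano's inequality gives a positive error probability as soon as $\max_{j,k}\mathrm{KL}\le c\log M\asymp cN$, i.e.\ $n\epsilon^2 N^{2(q-p)}\lesssim 1$.

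Balancing the two ceilings on $\epsilon^2$, namely $N^{-1-\alpha\max\{\beta,\mu\}}\asymp (nN^{2(q-p)})^{-1}$, produces $N\asymp n^{1/(1+\alpha\max\{\beta,\mu\}+2(p-q))}$, so the squared separation equals $\epsilon^2 N^{1+\alpha\gamma}=N^{-\alpha(\max\{\beta,\mu\}-\gamma)}\asymp n^{-\alpha(\max\{\beta,\mu\}-\gamma)/(1+\alpha\max\{\beta,\mu\}+2(p-q))}$, which matches the upper bound in Theorem \ref{theorem:earlystopping} under the identification $2(p-q)=-2(q-p)$ in the denominator. The standard Fano-to-minimax reduction ($\mathbb E\|H-u\|_\gamma^2\ge \tfrac{1}{4}(\min\text{sep})^2\cdot \mathbb P(\text{test error})$) then converts this into the claimed minimax lower bound.

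The main obstacle is the coupled bookkeeping in the membership step: one must simultaneously control $\|u^{(k)}\|_\beta$ (source condition) and $\|u^{(k)}\|_\mu$ ($L^\infty$ embedding), and it is the larger of the two power-space norms that dictates the binding constraint, producing the $\max\{\beta,\mu\}$ in the statement. A secondary technical point is that a single dyadic block already saturates the rate: because $\lambda_i$, $p_i$, $q_i$ are all exact polynomials in $i$, stitching together multiple blocks does not sharpen the bound. Finally, one should check that the Gaussian special case, used here for cleanliness of the KL formula, is admissible under the moment hypothesis; extending to general sub-Gaussian noise is a routine comparison-of-distributions step and does not affect the rate.
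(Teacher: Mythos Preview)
Your proposal is correct and follows essentially the same route as the paper: a Fano/Varshamov--Gilbert argument on a single dyadic block of eigenfunctions $\{e_i:i\in[N,2N)\}$, with the $\mathcal H^\beta$ and $\mathcal H^\mu\hookrightarrow L^\infty$ constraints producing the $\max\{\beta,\mu\}$, Gaussian noise giving the KL bound, and a balancing of the membership constraint against the KL budget. The only cosmetic differences are that the paper includes a factor $\mu_{i+m}^{\gamma/2}$ in the hypothesis definition (a reparametrization of your $\epsilon$) and bounds the mutual information by an average KL rather than a max KL; your remark on the $2(p-q)$ vs.\ $2(q-p)$ sign is apt, since the paper's own proof in the appendix also arrives at $2(p-q)$, matching the upper bound.
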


\begin{proof} To proof the lower bound, we use the standard Fano methods via reducing the lower bound to multiple hypothesis testing. We construct our hypothesis using binary strings $\omega=(\omega_1,\cdots,\omega_m)\in\{0,1\}^m$ ($m$ to be determined later) by defining
$$
u_\omega = \left(\frac{\epsilon}{m}\right)^{1/2}\sum_{i=1}^m \omega_i \mu_{i+m}^{\gamma/2} e_{i+m}.
$$

If we control \alertinline{$m\lesssim\epsilon^{-\frac{1}{\alpha\beta-\alpha\gamma}}$}, then we can  always keep $u_\omega\in\mathcal{H}^\beta$ for
$
||u_\omega||_\beta^2=\frac{\epsilon}{m}\sum_{i=1}^m \omega_i^2 \mu_{i+m}^{-(\beta-\gamma)}\lesssim \epsilon \mu_{2m}^{-(\beta-\gamma)}\lesssim m^{\alpha(\beta-\gamma)}\epsilon
=O(1)$. Similarly, we can select \alertinline{$m\lesssim\epsilon^{-\frac{1}{\alpha\mu-\alpha\gamma}}$} to control $||u_\omega||_{L^\infty}\le||u_\omega||_\mu\le O(1)$. At the same time, the associated PDE right hand side function$
f_\omega = \mathcal{A}_2^{-1}\mathcal{A}_1u_\omega = \left(\frac{\epsilon}{m}\right)^{1/2}\sum_{i=1}^m \frac{q_i\omega_i}{p_i} \mu_{i+m}^{\gamma/2} e_{i+m}
$.

Using Gilbert-Varshamov Lemma we know that there exists $M\ge 2^{m/8}$ binary strings $\omega^{(1)},\cdots,\omega^{(k)}\in\{0,1\}^m$ with $\omega^{(0)}=(0,\cdots,0)$ subject to
$$
\sum_{i=1}^m\left(\omega_i^{(j)}-\omega_i^{(k)}\right)^2\ge m/8
$$
holds for all $j\not =k$. As consequence, the distance between $f_\omega$ and $f_{\omega'}$ can be lower bounded as
$||u_\omega-u_{\omega'}||_\gamma^2=\frac{\epsilon}{m}\sum_{i=1}^m(\omega_i-\omega_i')^2\ge \epsilon/8$. To apply the Fano method, we still need to bound the mutual information between the uniform distribution over all the hypothesis and the distribution of the observed data. We take $\eta_i$ is sampled form $\mathcal{N}(0,\min\{\sigma,L\}^2)$ which satisfies the momentum condition. Then we know that this mutual information can be bounded by the following average of KL divergence\citep{tsybakov2008introduction} via

\begin{equation}
    \begin{aligned}
       I(V,X)=\frac{1}{M_\epsilon} \sum_{j=1}^{M_\epsilon} KL(P_j^n||P_0^n)=\frac{n}{2\bar\sigma^2 M_\epsilon}\sum_{j=1}^{M_\epsilon}||f_j-f_0||^2_{L_2}\lesssim {n\epsilon m_\epsilon^{-\alpha\gamma+2(p-q)}}
    \end{aligned}
\end{equation}

Then we apply the Fano's inequality

\begin{equation*}
    \begin{aligned}
       \mathbb{P}(\hat V\not= V)&\ge 1-\frac{I(V;X)+\log 2}{\log |V|}=1-\frac{\frac{16C^\gamma}{\min\{\sigma,L\}^2}{n\epsilon m_\epsilon^{-\alpha\gamma-2(p-q)}}+\log 2}{\frac{\log 2}{8}m_\epsilon}\\
       &=1-O(n\epsilon\epsilon^{\frac{1+\alpha\gamma+2(p-q)}{\alpha(\max\{\beta,\mu\}-\gamma)}})
    \end{aligned}
\end{equation*}

Take $\epsilon = n^{-\frac{(\max\{\beta,\mu\}-\gamma)\alpha}{\max\{\beta,\mu\}\alpha+2(p-q)+1}}$, we know that with constant probability we have
$$
||H(\{(x_i,y_i)\}_{i=1}^n)-u^\ast||_\gamma^2\gtrsim n^{-\frac{(\max\{\beta,\mu\}-\gamma)\alpha}{\max\{\beta,\mu\}\alpha+2(p-q)+1}}
$$
\end{proof}

\end{document}